\theoremstyle{plain}
\newtheorem{theorem}{Theorem}[section]
\newtheorem{proposition}[theorem]{Proposition}
\newtheorem{lemma}[theorem]{Lemma}
\newtheorem*{assumption}{Assumption}
\newtheorem*{thmA}{Theorem A}
\newtheorem*{thmB}{Theorem B}
\newtheorem*{thmC}{Theorem C}
\newtheorem*{corD}{Corollary D}
\newtheorem*{thmE}{Theorem E}
\newtheorem*{thmF}{Theorem F}
\theoremstyle{definition}
\newtheorem{definition}[theorem]{Definition}
\theoremstyle{remark}
\newtheorem{remark}[theorem]{Remark}
\newtheorem*{technical}{Technical aspects of the proofs}
\numberwithin{equation}{section}
\newcommand{\eps}{\varepsilon}
\newcommand{\Lap}{\Delta}
\newcommand{\Rm}{\mathrm{Rm}}
\newcommand{\Rc}{\mathrm{Rc}}
\newcommand{\Hess}{\mathrm{Hess}}
\newcommand{\tr}{\mathrm{tr}}
\newcommand{\grf}{g_{\mathrm{RF}}}
\newcommand{\norm}[1]{\lVert#1\rVert}
\renewcommand{\div}{\mathrm{div}}
\DeclareMathOperator{\aint}{\int\!\!\!\!\!\! \rule[2.6pt]{8pt}{.4pt}}
\begin{document}

\title{Perelman's lambda-functional and the stability of Ricci-flat metrics}

\author{Robert Haslhofer}

\address{Department of Mathematics, ETH Z\"{u}rich, Switzerland}

\email{robert.haslhofer@math.ethz.ch}

\begin{abstract}
In this article, we introduce a new method (based on Perelman's $\lambda$-functional) to study the stability of compact Ricci-flat metrics. Under the assumption that all infinitesimal Ricci-flat deformations are integrable we prove: (A) a Ricci-flat metric is a local maximizer of $\lambda$ in a $C^{2,\alpha}$-sense if and only if its Lichnerowicz Laplacian is nonpositive, (B) $\lambda$ satisfies a {\L}ojasiewicz-Simon gradient inequality, (C) the Ricci flow does not move excessively in gauge directions. As consequences, we obtain a rigidity result, a new proof of Sesum's dynamical stability theorem, and a dynamical instability theorem.
\end{abstract}

\maketitle

\section{Introduction}
A Ricci-flat manifold is a Riemannian manifold with vanishing Ricci curvature. Compact Ricci-flat manifolds are fairly hard to find, and their properties are of great interest (see \cite{Be,Jo} for extensive information). They are the critical points of the Einstein-Hilbert functional and the fixed points of Hamilton's Ricci flow \cite{Ha},
\begin{equation}\label{ricciflow}
\partial_tg(t)=-2\Rc_{g(t)}, \qquad g(0)=g_0.
\end{equation}
Historically, since Ricci-flat metrics are saddle points (but not extrema) of the Einstein-Hilbert functional and since the Ricci flow is not a gradient flow in the strict sense, the variational interpretation of Ricci-flatness was rather obscure. However, Perelman made the remarkable discovery that the Ricci flow can be interpreted as gradient flow of the functional
\begin{equation}\label{lambda}
\lambda(g)=\inf_{\begin{subarray}{c}
f\in C^\infty(M)\\
\int_M e^{-f}dV_g=1
\end{subarray}}
\int_M\left(R_g+|D f|_g^2\right)e^{-f}dV_g
\end{equation}
 on the space of metrics modulo diffeomorphisms.
 In particular, $\lambda$ is nondecreasing under the Ricci flow and the stationary points of $\lambda$ are precisely the Ricci-flat metrics \cite{Pe,KL}. The second variation of $\lambda$ is given in terms of the Lichnerowicz Laplacian \cite{CHI}.\\
 
We will be concerned with the \emph{stability} of compact Ricci-flat metrics $\grf$. To discuss this properly, let us consider the following notions of stability:
\begin{enumerate}[i.]
\item\emph{(Dynamical stability)}\label{dynstab} For every neighborhood $\mathcal{V}$ of $\grf$ in the space of metrics there exists a smaller neighborhood $\mathcal{U}\subset\mathcal{V}$ such that the Ricci flow starting in $\mathcal{U}$ exists and stays in $\mathcal{V}$ for all $t\geq 0$ and converges to a Ricci-flat metric in $\mathcal{V}$.\footnote{This notion of stability was called weak dynamical stability in \cite{Se}. However, it is the strongest possible notion of stability for dynamical systems with non-isolated critical points where one can only hope to prove convergence to \emph{some} critical point close to the specified one. Since Ricci-flat metrics are always non-isolated critical points in the space of metrics modulo diffeomorphisms, we decided to simply drop the word weak.}
\item\emph{(Local maximum of $\lambda$)}\label{lamstab} There exists a neighborhood $\mathcal{U}$ of $\grf$ such that $\lambda(g)\leq 0$ for all $g\in\mathcal{U}$ with equality if and only if $g$ is Ricci-flat.
\item\emph{(Linear stability)}\label{linstab} All eigenvalues of the Lichnerowicz Laplacian $\Lap^L_{\grf}=\Lap_{\grf}+2\Rm_{\grf}$ are nonpositive.
\end{enumerate}
It is easy to see that \ref{dynstab}$\Rightarrow$\ref{lamstab}$\Rightarrow$\ref{linstab}. Conversely, Natasa Sesum proved that linear stability implies dynamical stability if all infinitesimal Ricci flat deformations are integrable \cite{Se}. This was not straightforward, but using the integrability condition she succeeded in finding a good sequence of new reference metrics for the Ricci-DeTurck flow. In particular, she proved the dynamical stability of the $\mathrm{K}3$ surface, which already had been conjectured and partly proven by Guenther-Isenberg-Knopf \cite{GIK}. Additional interesting results about the dynamical stability of the Ricci flow can be found in \cite{Ba,Kn,KY,LY,OW,SSS1,SSS2,Ye}, and the K\"{a}hler case has also been studied by various authors. The proofs are mainly based on the Ricci-DeTurck flow, its linearization and parabolic estimates.\\

In this article, we introduce a \emph{new method} inspired by the work of Leon Simon \cite{Si} and the work of Dai-Wang-Wei \cite{DWW}, see also \cite{CFS, CHI, Ra,TZ}. We use the $\lambda$-functional to study stability and instability. With this method we obtain a new proof of Sesum's dynamical stability result (Theorem E) and a number of new results: In particular, we prove a {\L}ojasiewicz-Simon gradient inequality for the $\lambda$-functional (Theorem B), and a transversality estimate (Theorem C). Moreover, we prove a local analogue of the positive mass theorem for some compact Ricci-flat metrics (Theorem A), and the corresponding rigidity result (Corollary D). We also prove that unstable Ricci-flat metrics give rise to nontrivial ancient Ricci flows emerging from them (Theorem F). In addition to these results, which we hope are of independent interest, the focus is on methods and proofs. We believe that it is important to understand stability in terms of the $\lambda$-functional and not just in terms of PDEs and that our proofs shed new light on the variational structure of Ricci-flat metrics and the role of the gauge group.\\

The logical structure is that we have three general theorems (A,B,C) and three consequences (D,E,F). To state them, let us fix the following assumption for the whole paper:

\begin{assumption}
Let $(M,\grf)$ be a compact, Ricci-flat manifold and assume that all infinitesimal Ricci-flat deformations of $\grf$ are integrable.
\end{assumption}

The integrability condition means that for every symmetric 2-tensor $h$ in the kernel of the linearization of Ricci, we can find a curve of Ricci-flat metrics with initial velocity $h$ (see \cite[Sec. 12]{Be} and Section \ref{ebinandintegrability} for details, and the discussion after Theorem C for applicability and context).\\

To set the stage for Theorem A, recall that at a Ricci-flat metric $\lambda(\grf)=0$, $D\lambda({\grf})=0$ and \cite{CHI}
\begin{equation}
D^2\lambda({\grf})[h,h]=\tfrac{1}{2}\aint_M\langle h, \Lap_{\grf}^L h\rangle_{\grf}dV_{\grf},\quad h\in\ker\div_{\grf},
\end{equation}
where $\Lap^L_{\grf}h_{ij}=\Lap h_{ij}+2R_{ipjq}h_{pq}$. Thus, as mentioned above, local maxima of $\lambda$ are linearly stable. In the \emph{integrable} case, we can prove the converse implication:

\begin{thmA}[Local maxima of $\lambda$]\label{locmax} If the Lichnerowicz Laplacian $\Lap^L_{\grf}=\Lap_{\grf}+2\Rm_{\grf}$ is nonpositive, then there exists a $C^{2,\alpha}$-neighborhood\, $\mathcal{U}\subset\mathcal{M}(M)$ of $\grf$ in the space of metrics on $M$, such that $\lambda(g)\leq 0$ for all $g\in\mathcal{U}$. Moreover, equality holds if and only if $g$ is Ricci-flat.
\end{thmA}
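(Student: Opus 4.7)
The plan is to combine three ingredients: diffeomorphism invariance of $\lambda$ together with Ebin's slice theorem, the integrability hypothesis (which produces a smooth premoduli space of Ricci-flat metrics near $\grf$), and a second-order Taylor expansion of $\lambda$ around a carefully chosen nearby Ricci-flat metric, so that the zero modes of $\Lap^L$ are absorbed by the premoduli and the remaining strictly negative eigenvalues provide the needed coercivity.

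More precisely, by Ebin's slice theorem every $g$ in a small $C^{2,\alpha}$-neighborhood of $\grf$ is $C^{2,\alpha}$-equivalent under $\mathrm{Diff}(M)$ to a metric in a smooth slice $\mathcal{S}$ through $\grf$ with $T_{\grf}\mathcal{S}=\ker\div_{\grf}$; since $\lambda$ is diffeomorphism invariant, I may therefore assume $g\in\mathcal{S}$. Under the integrability assumption, a Koiso-type argument identifies the zero set of Ricci inside $\mathcal{S}$ as a smooth finite-dimensional submanifold $\mathcal{F}$ (the premoduli space) tangent at $\grf$ to the divergence-free part of $\ker\Lap^L_{\grf}$. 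Continuity of the spectrum of $\Lap^L$ along $\mathcal{F}$, combined with the hypothesis $\Lap^L_{\grf}\le 0$, yields a uniform $L^2$-spectral gap $\mu>0$ on the $L^2_{\bar g}$-orthogonal complement of $T_{\bar g}\mathcal{F}$ for every $\bar g\in\mathcal{F}$ sufficiently close to $\grf$.

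Next, a standard implicit function argument produces, for every $g\in\mathcal{S}$ sufficiently $C^{2,\alpha}$-close to $\grf$, a unique $\bar g\in\mathcal{F}$ such that $h:=g-\bar g$ is $L^2_{\bar g}$-orthogonal to $T_{\bar g}\mathcal{F}$. Since $\bar g$ is Ricci-flat, $\lambda(\bar g)=0$ and $D\lambda(\bar g)=0$, so Taylor's theorem combined with the second variation formula give
\[
\lambda(g)=\tfrac{1}{4}\aint_M\langle h,\Lap^L_{\bar g}h\rangle_{\bar g}\,dV_{\bar g}+R(h).
\]
The orthogonality together with the spectral gap force the quadratic term to be at most $-c_1\aint_M|h|_{\bar g}^2\,dV_{\bar g}$, while a remainder estimate of the form $|R(h)|\le c_2\|h\|_{C^{2,\alpha}}\aint_M|h|_{\bar g}^2\,dV_{\bar g}$ then yields $\lambda(g)\le 0$ once $\|h\|_{C^{2,\alpha}}$ is small enough, with equality if and only if $h\equiv 0$, i.e.\ iff $g=\bar g$ is Ricci-flat.

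The main obstacle is the nonlinear remainder estimate. Because $\lambda(g)$ is only implicitly defined through the minimizing auxiliary function $f_g$ satisfying $-4\Lap_g f+R_g f=\lambda(g)f$ with $\int_M e^{-f}\,dV_g=1$, controlling the cubic error in $h$ requires a careful analysis of the smooth (in fact analytic) dependence of $f_g$ on $g$, with elliptic estimates that mix $L^2$ bounds on the tensor part with $C^{2,\alpha}$ bounds on the derivatives, and a tracking of how the Hessian of $\lambda$ transforms when the base point moves along $\mathcal{F}$. A secondary technical point is verifying that the slice construction and the projection onto $\mathcal{F}$ can be carried out in $C^{2,\alpha}$ rather than only in $C^\infty$.
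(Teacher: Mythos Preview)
Your overall strategy is essentially the paper's: reduce to the slice via Ebin--Palais, use integrability to produce the premoduli $\mathcal{F}$ (the paper writes $\mathcal{P}_{\grf}$), decompose $g=\bar g+h$ with $\bar g\in\mathcal{F}$ and $h$ normal, Taylor-expand $\lambda$ around the Ricci-flat $\bar g$, and dominate the cubic remainder by the strictly negative quadratic term. (One cosmetic difference: the paper uses the \emph{fixed} normal space $N_{\grf}=(T_{\grf}\mathcal{P}_{\grf})^{\perp}\subset\ker\div_{\grf}$ and the ``exponential map'' $(\bar g,h)\mapsto\bar g+h$, rather than your moving orthogonal complement $T_{\bar g}\mathcal{F}^{\perp}$; this avoids an extra implicit-function step.)

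There is, however, a real mismatch of norms in your sketch that would make the argument fail as written. The cubic remainder for $\lambda$ does \emph{not} obey an estimate of the form $|R(h)|\le c_2\|h\|_{C^{2,\alpha}}\aint|h|^2$; the third variation necessarily produces terms of the schematic type $\int h\,Dh\,Dh$ (and contour-integral terms involving $(\lambda-H)^{-1}$) that are genuinely $H^1$-quadratic in $h$. What the paper proves (Proposition~\ref{3rdvaroflambda}, via eigenvalue perturbation of $H_g=-4\Lap_g+R_g$, partial integration, and the $H^{-2}\to L^2$ mapping property of the resolvent) is
\[
\Bigl|\tfrac{d^3}{d\eps^3}\big|_0\lambda(g+\eps h)\Bigr|\le C\,\|h\|_{C^{2,\alpha}}\,\|h\|_{H^1}^2,
\]
uniformly in a $C^{2,\alpha}$-neighborhood. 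Consequently your $L^2$-spectral gap on $(T_{\bar g}\mathcal{F})^{\perp}$ is not enough to absorb the error; you need $H^1$-coercivity of the Hessian. The paper gets this by the interpolation trick
\[
\langle h,\Lap^L_{\bar g}h\rangle=-\eps\|Dh\|_{L^2}^2+(1-\eps)\,\bigl\langle h,\bigl(\Lap_{\bar g}+\tfrac{2}{1-\eps}\Rm_{\bar g}\bigr)h\bigr\rangle\le -c\,\|h\|_{H^1}^2
\]
for small $\eps>0$, using continuity of the spectrum along $\mathcal{P}_{\grf}$. With the remainder upgraded to $H^1$ and the Hessian bound upgraded accordingly, your argument goes through and coincides with the paper's.
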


Theorem A is nontrivial for the following three reasons: $D^2\lambda$ vanishes on Lie-derivatives, $\Lap^L$ always has a kernel, and  it is difficult to estimate the error term in the Taylor-expansion coming from the third variation of $\lambda$.\\

Next, let us state our {\L}ojasiewicz-Simon gradient inequality for the $\lambda$-functional (with optimal {\L}ojasiewicz exponent $1/2$ due to integrability):

\begin{thmB}[{\L}ojasiewicz inequality for $\lambda$]\label{lojasiewicz} There exists a $C^{2,\alpha}$-neighborhood\, $\mathcal{U}\subset\mathcal{M}(M)$ of $\grf$ and a constant $c=c(M,\grf)>0$ such that
\begin{equation}
\norm{\Rc_g+\Hess_gf_g}_{L^2}\geq c |\lambda(g)|^{1/2}\label{lojaineq}
\end{equation}
for all $g\in\mathcal{U}$, where $f_g$ is the minimizer in (\ref{lambda}).
\end{thmB}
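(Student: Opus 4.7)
The strategy is a Lyapunov--Schmidt reduction exploiting the integrability assumption. Both sides of \eqref{lojaineq} are $\mathrm{Diff}(M)$-invariant, so by Ebin's slice theorem I may assume, after pulling $g$ back by a diffeomorphism, that $g=\grf+h$ lies in the slice $\mathcal{S}$ through $\grf$ defined by $\div_{\grf}h=0$. Under the integrability hypothesis, Koiso's theorem ensures that the set $\mathcal{Z}\subset\mathcal{S}$ of Ricci-flat metrics near $\grf$ is a smooth finite-dimensional submanifold with $T_{\grf}\mathcal{Z}=\ker\Lap^L_{\grf}\cap\ker\div_{\grf}=:K$, i.e.\ the entire critical set of $\lambda$ in a neighborhood of $\grf$ modulo diffeomorphisms.

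Given $g\in\mathcal{S}$ sufficiently close to $\grf$ in $C^{2,\alpha}$, I would decompose $h=h_\parallel+h_\perp$ in $L^2$ with $h_\parallel\in K$ and $h_\perp\perp K$, and apply the implicit function theorem to select $g_0\in\mathcal{Z}$ with $g_0=\grf+h_\parallel+O(\|h_\parallel\|^2)$. Setting $\tilde h:=g-g_0$ we then have $\tilde h=h_\perp+O(\|h_\parallel\|^2)$ and $\tilde h$ is $L^2$-orthogonal to $T_{g_0}\mathcal{Z}$ up to quadratic error. The key linear input is the uniform spectral gap
\begin{equation*}
\|\Lap^L_{g_0}\tilde h\|_{L^2}\geq c\|\tilde h\|_{L^2},
\end{equation*}
which holds because by integrability $\ker\Lap^L_{g_0}\cap\ker\div_{g_0}=T_{g_0}\mathcal{Z}$, and standard Fredholm perturbation theory gives uniformity of $c$ as $g_0$ ranges over $\mathcal{Z}$ near $\grf$.

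Next I would Taylor expand both sides around $g_0$. Since $\lambda(g_0)=0$, $D\lambda(g_0)=0$, and $D^2\lambda(g_0)[\cdot,\cdot]=\tfrac{1}{2}\int\langle\cdot,\Lap^L_{g_0}\cdot\rangle\,dV_{g_0}$ by \cite{CHI}, and since $\Rc_{g_0}+\Hess_{g_0}f_{g_0}=0$ with linearization at $g_0$ along the slice proportional to $\Lap^L_{g_0}$, I would establish error estimates of the shape
\begin{equation*}
|\lambda(g)|\leq C\|\Lap^L_{g_0}\tilde h\|_{L^2}\|\tilde h\|_{L^2}+C\|\tilde h\|_{C^{2,\alpha}}\|\tilde h\|_{L^2}^2,
\end{equation*}
\begin{equation*}
\|\Rc_g+\Hess_gf_g\|_{L^2}\geq c\|\Lap^L_{g_0}\tilde h\|_{L^2}-C\|\tilde h\|_{C^{2,\alpha}}\|\tilde h\|_{L^2}.
\end{equation*}
Combining these with the spectral gap and shrinking the $C^{2,\alpha}$-neighborhood so that the quadratic errors are absorbed yields $|\lambda(g)|\leq C'\|\Rc_g+\Hess_gf_g\|_{L^2}^2$, which is \eqref{lojaineq}.

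The main obstacle is the quantitative cubic (resp.\ quadratic) error estimate for the Taylor expansion of $\lambda$ (resp.\ of $\Rc_g+\Hess_gf_g$) in the mixed norms $C^{2,\alpha}$ and $L^2$, which requires careful analysis of the third variation of $\lambda$ and of the implicit dependence of the minimizer $f_g$ on $g$; this is the same analytical ingredient that underlies Theorem A. A secondary technical point is the uniformity of the spectral gap and the error constants as the basepoint $g_0\in\mathcal{Z}$ varies, handled by standard perturbation theory on the smooth family $\mathcal{Z}$. The reason integrability produces the optimal {\L}ojasiewicz exponent $1/2$ is that it makes the critical set of $\lambda$ a smooth manifold on which $\lambda$ vanishes identically, reducing the inequality to the Morse--Bott situation in a neighborhood of that manifold.
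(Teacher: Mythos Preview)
Your proposal is correct and follows essentially the same route as the paper: reduce to the affine slice via Ebin--Palais, use integrability (Proposition~\ref{integrability2}) to realize the Ricci-flat locus $\mathcal{P}_{\grf}$ (your $\mathcal{Z}$) as a smooth manifold, write $g=\bar g+h$ with $\bar g\in\mathcal{P}_{\grf}$ and $h$ normal, and then combine the spectral gap for $\Lap^L$ on the normal space with the quadratic/cubic error bounds (Proposition~\ref{3rdvaroflambda}, Lemmas~\ref{taylorsteady}--\ref{estimateofo}) to get $|\lambda(g)|\leq C\|h\|^2$ and $\|\Rc_g+\Hess_gf_g\|_{L^2}^2\geq c\|h\|^2$. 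The only cosmetic differences are that the paper Taylor-expands $\Rc_g+\Hess_gf_g$ at the fixed basepoint $\grf$ via a two-parameter formula (Lemma~\ref{taylor}) rather than at your moving $g_0$, and that the error terms actually carry $H^1$/$H^2$ norms of $h$ rather than $L^2$; this is immaterial since $\|\Lap^L\tilde h\|_{L^2}$ controls $\|\tilde h\|_{H^2}$ on the normal space by elliptic regularity.
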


For the interpretation of (\ref{lojaineq}) as a gradient inequality note that $\Rc_g+\Hess_gf_g$ is the (negative) $L^2(M,e^{-f_g}dV_g)$-gradient of $\lambda$ by Perelman's first variation formula
\begin{equation}
D\lambda(g)[h]=- \int_M\langle h,\Rc_g+\Hess_g f_g \rangle_g e^{-f_g}dV_g.\label{perfirstvar}
\end{equation}
Theorem B is interesting, since it can be used as a general tool to prove convergence and to draw further dynamical conclusions. More precisely, we will always apply it in combination with the following theorem:

\begin{thmC}[Transversality]\label{gaugeest} There exists a $C^{2,\alpha}$-neighborhood\, $\mathcal{U}\subset\mathcal{M}(M)$ of $\grf$ and a constant $c=c(M,\grf)>0$ such that
\begin{equation}
\norm{\Rc_g+\Hess_gf_g}_{L^2}\geq  c \norm{\Rc_g}_{L^2}\label{transverest}
\end{equation}
for all $g\in\mathcal{U}$, where $f_g$ is the minimizer in (\ref{lambda}).
\end{thmC}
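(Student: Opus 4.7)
The plan is to show that the gauge-like term $\Hess_g f_g$ cannot be very large compared to $T := \Rc_g + \Hess_g f_g$ in $L^2$; the transversality estimate then follows from the triangle inequality $\norm{\Rc_g}_{L^2}\leq \norm{T}_{L^2}+\norm{\Hess_g f_g}_{L^2}$. The argument uses only the Euler-Lagrange equation for the minimizer $f_g$, the pointwise bound $(\tr A)^2\leq n|A|^2$, and the Bochner identity applied to $f_g$.

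First, I would turn the Euler-Lagrange equation $R_g+2\Lap_g f_g-|\nabla f_g|^2=\lambda(g)$ into a Poisson equation for $f_g$ with right-hand side controlled by $\tr_g T$. Since $\tr_g T=R_g+\Lap_g f_g$, the Euler-Lagrange equation rearranges to $\Lap_g f_g=\lambda(g)-\tr_g T+|\nabla f_g|^2$. Integrating over $M$ (and using $\int_M\Lap_g f_g\, dV_g=0$) pins down $\lambda(g)$, and subtracting the mean yields
\begin{equation*}
\Lap_g f_g=-\bigl(\tr_g T-\overline{\tr_g T}\bigr)+\bigl(|\nabla f_g|^2-\overline{|\nabla f_g|^2}\bigr),
\end{equation*}
where $\overline{(\cdot)}$ denotes the $dV_g$-average over $M$. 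The first term has $L^2$-norm at most $\sqrt{n}\,\norm{T}_{L^2}$ by $(\tr A)^2\leq n|A|^2$, while the second is genuinely higher order: Schauder estimates for the Euler-Lagrange equation give $\norm{f_g-f_{\grf}}_{C^{2,\alpha}}\to 0$ as $g\to\grf$ in $C^{2,\alpha}$, so $\norm{\nabla f_g}_{L^\infty}$ is small, and combined with the Poincar\'e inequality $\norm{\nabla f_g}_{L^2}\leq C_P\norm{\Lap_g f_g}_{L^2}$ one obtains $\norm{|\nabla f_g|^2}_{L^2}\leq C\norm{g-\grf}_{C^{2,\alpha}}\norm{\Lap_g f_g}_{L^2}$, which is absorbable. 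This gives $\norm{\Lap_g f_g}_{L^2}\leq 2\sqrt{n}\,\norm{T}_{L^2}$.

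Next, the Bochner identity
\begin{equation*}
\norm{\Hess_g f_g}_{L^2}^2=\norm{\Lap_g f_g}_{L^2}^2-\int_M\Rc_g(\nabla f_g,\nabla f_g)\,dV_g
\end{equation*}
transfers the control to the full Hessian. The Ricci remainder is bounded by $\norm{\Rc_g}_{L^\infty}\norm{\nabla f_g}_{L^2}^2\lesssim\norm{g-\grf}_{C^{2,\alpha}}\norm{\Lap_g f_g}_{L^2}^2$ and is again absorbable. Hence $\norm{\Hess_g f_g}_{L^2}\leq C_n\norm{T}_{L^2}$ for some $C_n=C_n(M,\grf)$, and the triangle inequality yields the transversality estimate with $c=(1+C_n)^{-1}$.

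The main technical obstacle is that both higher-order corrections (from $|\nabla f_g|^2$ in the Poisson step and from the Ricci term in the Bochner step) are themselves expressed in terms of $\norm{\Lap_g f_g}_{L^2}$ and $\norm{\Hess_g f_g}_{L^2}$, so the bound has to be closed by an absorption argument that requires the $C^{2,\alpha}$-neighborhood to be sufficiently small. The key input for this is the continuous dependence of $f_g$ on $g$, which is standard via the implicit function theorem applied to the elliptic Euler-Lagrange equation at the unique minimizer $f_{\grf}$.
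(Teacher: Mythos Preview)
Your argument is correct and takes a genuinely different, more elementary route than the paper.

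The paper proceeds by first invoking the Ebin--Palais slice theorem and the integrability assumption to write $g=\bar g+h$ with $\bar g\in\mathcal P_{\grf}$ Ricci-flat and $h\in N_{\grf}$, then Taylor-expands $\Rc_g+\Hess_g f_g=-\tfrac12\Lap^L_{\grf}h+O_1+O_2$ (Lemmas~\ref{taylorsteady} and~\ref{estimateofo}) and uses the injectivity of $\Lap^L_{\grf}$ on $N_{\grf}$ to obtain the stronger bound $\norm{\Rc_g+\Hess_gf_g}_{L^2}^2\geq c\norm{h}_{H^2}^2$; Theorem~C then follows from $\norm{\Rc_g}_{L^2}\leq C\norm{h}_{H^2}$. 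Your approach bypasses all of this: you work directly with the Euler--Lagrange equation to bound $\norm{\Lap_g f_g}_{L^2}$ by $\norm{T}_{L^2}$ (after absorbing the quadratic $|\nabla f_g|^2$ term using $\norm{\nabla f_g}_{L^\infty}\to 0$, which is exactly Lemma~\ref{ctwoalphalemma}), and then upgrade to $\norm{\Hess_g f_g}_{L^2}$ via the integrated Bochner identity, absorbing the Ricci remainder using $\norm{\Rc_g}_{L^\infty}\to 0$. The triangle inequality then gives $c=(1+C_n)^{-1}$. This uses neither the slice theorem nor the integrability assumption, so it actually proves Theorem~C in greater generality than stated. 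The trade-off is that the paper's heavier machinery yields the intermediate inequality $\norm{\Rc_g+\Hess_gf_g}_{L^2}^2\geq c\norm{h}_{H^2}^2$, which it immediately reuses for Theorem~B; your argument gives Theorem~C cleanly but does not by itself supply that $H^2$-control on $h$.
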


Theorem C is a quantitative generalization of the fact that compact steady solitons are Ricci-flat. Since $\div_f(\Rc+\Hess f)=0$ \cite[Eq. 10.11]{KL}, it shows that the Ricci flow does not move excessively in gauge directions.\\

Before turning to the applications, let us discuss what is currently known and unknown: All \emph{known} compact Ricci-flat manifolds satisfy the integrability assumption and $\Lap^L\leq 0$ (they have special holonomy, so this follows from the results in \cite{DWW,Jo,Ti,To,Wa,Ya}), but it is a major open question what the true landscape of all compact Ricci-flat manifolds looks like. One main open problem is to construct a compact Ricci-flat manifold with holonomy full $\mathrm{SO}_n$ (see e.g. \cite[Sec. 0.I]{Be}). Another related open question (called the positive mass problem for Ricci flat manifolds in \cite{CHI}) is if unstable \emph{compact} Ricci-flat metrics exist. Finally, one can ask if there is a Ricci-flat metric with nonintegrable deformations.\\

Given the difficulty of the above questions and that our picture of compact Ricci-flat metrics already has been drastically changed twice due to Yau and Joyce, we find it very interesting to discuss all cases. First, as an immediate consequence of Theorem A we obtain (compare with \cite{DWW}):

\begin{corD}[Rigidity of Ricci-flat metrics]\label{rigidity} If $\Lap^L_{\grf}\leq 0$, then every small deformation of $\grf$ with nonnegative scalar curvature is Ricci-flat.
\end{corD}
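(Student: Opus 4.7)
The strategy is to apply Theorem A directly: under the hypothesis $\Delta^L_{\grf}\leq 0$, Theorem A supplies a $C^{2,\alpha}$-neighborhood $\mathcal{U}$ of $\grf$ such that $\lambda(g)\leq 0$ on $\mathcal{U}$ with equality precisely on the Ricci-flat set. So I only need to show that a nearby deformation $g\in\mathcal{U}$ with $R_g\geq 0$ automatically satisfies $\lambda(g)\geq 0$, and then invoke the equality clause of Theorem A.

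The lower bound on $\lambda$ is immediate from the variational definition (\ref{lambda}). Indeed, for any smooth $f$ subject to the normalization $\int_M e^{-f}dV_g=1$, the integrand
\begin{equation*}
\bigl(R_g+|Df|_g^2\bigr)e^{-f}
\end{equation*}
is pointwise nonnegative whenever $R_g\geq 0$, so the integral is nonnegative. Taking the infimum over admissible $f$ yields $\lambda(g)\geq 0$.

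Interpreting ``small deformation'' to mean ``$g\in\mathcal{U}$'' with $\mathcal{U}$ as in Theorem A, we combine the two inequalities $\lambda(g)\geq 0$ and $\lambda(g)\leq 0$ to conclude $\lambda(g)=0$. The rigidity half of Theorem A then forces $g$ to be Ricci-flat, completing the proof.

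There is no real obstacle here: the entire analytic difficulty is absorbed into Theorem A, which simultaneously provides the upper bound on $\lambda$ in a $C^{2,\alpha}$-neighborhood and the characterization of the equality case. The only thing to verify is the elementary lower bound on $\lambda$ in terms of scalar curvature, which follows at once from the definition of $\lambda$ as an infimum of integrals with a manifestly nonnegative integrand.
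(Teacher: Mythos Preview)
Your proof is correct and follows exactly the paper's approach: use Theorem A for the upper bound $\lambda(g)\leq 0$ and its equality characterization, and obtain the lower bound $\lambda(g)\geq 0$ directly from $R_g\geq 0$ via the variational definition of $\lambda$. The paper's proof is essentially a one-line version of yours.
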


The reader might wish to compare Corollary D with the following rigidity case of the positive mass theorem \cite{SY,Wi}: Every compact deformation of the flat metric on $\mathbb{R}^n$ with nonnegative scalar curvature is flat. More generally, Theorem A can be thought of as the positive mass theorem for linearly stable, integrable, compact Ricci-flat metrics.\\

Second, as mentioned before, we have a new proof of Sesum's dynamical stability theorem:

\begin{thmE}[Dynamical stability]\label{dynstabthm} If $k\geq 3$ and $\Lap^L_{\grf}\leq 0$, then for every $C^k$-neighborhood $\mathcal{V}$ of ${\grf}$ there exists a $C^{k+2}$-neighborhood $\mathcal{U}\subset\mathcal{V}$ of ${\grf}$ such that the Ricci flow starting in  $\mathcal{U}$ exists and stays in $\mathcal{V}$ for all $t\geq 0$ and converges exponentially to a Ricci-flat metric in $\mathcal{V}$.
\end{thmE}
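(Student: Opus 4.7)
The plan is to run a continuity argument driven by Perelman's monotonicity formula, combined with the three theorems above. Fix a $C^{2,\alpha}$-neighborhood $\mathcal{U}_0 \subset \mathcal{M}(M)$ of $\grf$ on which Theorems A, B, and C all apply, and shrink $\mathcal{V}$ if necessary to arrange $\mathcal{V} \subset \mathcal{U}_0$. Then choose a $C^{k+2}$-neighborhood $\mathcal{U} \subset \mathcal{V}$ small enough that local well-posedness of Ricci flow together with Shi-type derivative estimates yields uniform $C^{k,\alpha}$ bounds on $g(t)$ and on the minimizer $f_{g(t)}$ over any definite time interval. Set $T^{\ast} = \sup\{T \ge 0 : g(t) \in \mathcal{V} \text{ for all } t \in [0,T]\}$; the goal is to prove $T^{\ast} = \infty$ and exponential $C^k$-convergence.

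On $[0, T^{\ast})$, Theorem A forces $\lambda(g(t)) \le 0$, while Perelman's formula $\tfrac{d}{dt}\lambda(g(t)) = 2\int_M |\Rc_g + \Hess_g f_g|_g^2\, e^{-f_g}\, dV_g$ combined with Theorem B (with constant $c$) yields the differential inequality
\begin{equation}
\tfrac{d}{dt}\bigl|\lambda(g(t))\bigr| \le -2c^2\, \bigl|\lambda(g(t))\bigr|,
\end{equation}
so $|\lambda(g(t))| \le |\lambda(g_0)|\, e^{-2c^2 t}$. A direct computation gives
\begin{equation}
-\tfrac{d}{dt}\bigl|\lambda(g(t))\bigr|^{1/2} \ge c'\, \norm{\Rc_g + \Hess_g f_g}_{L^2},
\end{equation}
and integrating in time and invoking Theorem C produces the trajectory-length bound
\begin{equation}
\int_0^{T^{\ast}} \norm{\Rc_{g(t)}}_{L^2}\, dt \le C\, |\lambda(g_0)|^{1/2}.
\end{equation}
Since $\partial_t g = -2\Rc$, this controls $\|g(t) - g_0\|_{L^2}$, and interpolation with the uniform $C^{k,\alpha}$ bounds upgrades it to $C^k$-smallness of $g(t) - g_0$ of order $|\lambda(g_0)|^{\delta}$ for some $\delta > 0$.

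Choosing $\mathcal{U}$ sufficiently small makes this $C^k$-deviation strictly smaller than the $C^k$-distance from $\grf$ to $\partial\mathcal{V}$, which rules out $T^{\ast} < \infty$ by openness of $\mathcal{V}$. Hence $g(t)$ exists and stays in $\mathcal{V}$ for all $t \ge 0$. The finite $L^2$-length combined with uniform $C^{k,\alpha}$ bounds and Arzel\`a--Ascoli forces $g(t) \to g_\infty$ in $C^k$ for some $g_\infty \in \mathcal{V}$. Monotonicity of $\lambda$ and the bound $\lambda(g(t)) \le 0$ force $\lambda(g_\infty) = 0$, so the equality case of Theorem A implies $g_\infty$ is Ricci-flat. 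Applying the finite-length estimate on the tail interval $[t, \infty)$ gives $\|g(t) - g_\infty\|_{L^2} \le C|\lambda(g(t))|^{1/2} \le C'\, e^{-c^2 t}$, and interpolation with the uniform higher-order bounds yields the exponential $C^k$-rate.

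The main obstacle is precisely this continuity step: the $\lambda$-functional directly controls only $L^2$-quantities, so to keep the trajectory inside the $C^{2,\alpha}$-neighborhood on which Theorems A, B, C apply, one must convert $L^2$-length bounds into $C^{2,\alpha}$-distance bounds. This forces the use of Shi's estimates and interpolation inequalities, and it is exactly the reason the initial metric must be $C^{k+2}$-close (rather than merely $C^k$-close) in order to obtain $C^k$-convergence — the derivatives lost in parabolic interpolation must be paid for up front in the initial regularity.
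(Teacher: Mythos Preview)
Your overall architecture matches the paper's: a continuity argument closed by the trajectory-length bound $\int\|\Rc\|_{L^2}\,dt \le C|\lambda(g_0)|^{1/2}$, which is exactly the paper's Lemma~\ref{energylemma} (derived from Theorems B and C via $-\tfrac{d}{dt}|\lambda|^{1/2}\ge c\,\|\Rc\|_{L^2}$). The exponential decay of $|\lambda|$ and the conclusion $\Rc_{g_\infty}=0$ via the equality case of Theorem A are also the same.

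The gap is in your upgrade from $L^2$ to $C^k$. You propose to interpolate the bound on $\|g(t)-g_0\|_{L^2}$ against ``uniform $C^{k,\alpha}$ bounds on $g(t)$'', but on $[0,T^\ast)$ the continuity hypothesis gives you only $g(t)\in\mathcal{V}$, i.e.\ a uniform $C^k$ bound, not $C^{k,\alpha}$. Shi's estimates control $|D^i_{g(t)}\Rm|$ for all $i$ once $|\Rm|$ is bounded, but this does \emph{not} translate into $C^{k,\alpha}$ control of $g(t)$ relative to the fixed background $\grf$: converting $D_{g(t)}$-derivatives of curvature into $D_{\grf}$-derivatives up to order $m$ already requires $C^m$-closeness of $g(t)$ to $\grf$, so you cannot bootstrap beyond the $C^k$ bound you assumed. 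Interpolating $\|g(t)-g_0\|$ between $L^2$ and $C^k$ then only yields smallness in $C^{k-\delta}$, which fails to close the loop in $C^k$.

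The paper avoids this circularity by smoothing the \emph{Ricci tensor} rather than the metric. Its Lemma~\ref{tgeq1est} proves $\|\Rc_{g(t)}\|_{C^k_{g(t)}}\le C_k\|\Rc_{g(t-1/2)}\|_{L^2}$ for $t\ge 1$, via Moser iteration on the evolution inequality for $|\Rc|^2$ (to pass $L^2\to L^\infty$) followed by Bando--Bernstein--Shi estimates (to gain derivatives). Since the $C^k_{g(t)}$- and $C^k_{\grf}$-norms are comparable under the continuity hypothesis, this gives $\tfrac{d}{dt}\|g(t)-g(1)\|_{C^k_{\grf}} \le C\|\Rc_{g(t-1/2)}\|_{L^2}$, and integrating against the $L^2$-length bound closes the argument in $C^k$ directly, with constant proportional to $|\lambda(g_0)|^{1/2}$. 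The short-time interval $[0,1]$ is handled separately (Lemma~\ref{tleq1est}) using the $C^{k+2}$-closeness of $g_0$ and the maximum principle on the curvature evolution equations---this is where the two-derivative loss enters, consistent with your final remark.
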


Our proof is based on Theorem A, B and C, and shows that the energy controls the distance (Lemma \ref{energylemma}).\\

Third, using Theorem B and C we obtain:

\begin{thmF}[Dynamical instability]\label{ancientsolem} If $\Lap^L_{\grf}\nleq 0$, then there exists a nontrivial ancient solution emerging from $\grf$, i.e. a nontrivial Ricci flow $g(t), t\in(-\infty,T)$ with $\lim_{t\to -\infty} g(t)=\grf$.
\end{thmF}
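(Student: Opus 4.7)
The plan is to construct the ancient solution as a limit of Ricci flows starting from smaller and smaller perturbations of $\grf$ along an unstable direction. Since $\Lap^L_{\grf}\nleq 0$, the second variation formula produces some $h\in\ker\div_{\grf}$ with $D^2\lambda(\grf)[h,h]>0$; I choose $h$ orthogonal to the tangent space of the moduli space of Ricci-flat metrics near $\grf$. Taylor expansion (using $\lambda(\grf)=0$ and $D\lambda(\grf)=0$) gives $\lambda(\grf+sh)=s^2 D^2\lambda(\grf)[h,h]+O(s^3)>0$ for $s>0$ small. I pick $s_i\searrow 0$, let $g_i(t)$ be the Ricci flow starting at $g_i(0)=\grf+s_i h$, and let $T_i$ be the first time $g_i(T_i)$ leaves a fixed small $C^{2,\alpha}$-ball around $\grf$ inside the neighborhood $\mathcal U$ where Theorems B and C both apply.

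Next, I would use Perelman's monotonicity $\tfrac{d}{dt}\lambda(g_i(t))=2\int|\Rc+\Hess f|^2 e^{-f}\,dV$ together with Theorem B to obtain the differential inequality $\tfrac{d\lambda}{dt}\geq c\lambda$ so long as the flow stays in $\mathcal U$, hence $\lambda(g_i(t))\geq\lambda(g_i(0))e^{ct}$. Since $\lambda$ is bounded on $\mathcal U$, this forces $T_i\to\infty$ as $s_i\to 0$. Theorem C then supplies $\|\partial_t g_i\|_{L^2}=2\|\Rc\|_{L^2}\leq C\|\Rc+\Hess f\|_{L^2}\leq C'(d\lambda/dt)^{1/2}$, and combining with Theorem B in the form $(d\lambda/dt)^{1/2}\geq c\lambda^{1/2}$ yields the key distance estimate
\begin{equation*}
\int_0^{T_i}\|\partial_t g_i\|_{L^2}\,dt\;\leq\; C''\int_0^{T_i}\frac{d\lambda/dt}{\lambda^{1/2}}\,dt\;=\;2C''\bigl(\lambda(g_i(T_i))^{1/2}-\lambda(g_i(0))^{1/2}\bigr)\;\leq\; C''',
\end{equation*}
so all the approximating flows travel a uniformly bounded total $L^2$-distance on $[0,T_i]$.

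The last step is to shift time and pass to a limit: set $\tilde g_i(t):=g_i(t+T_i)$ on $[-T_i,0]$. Because these flows remain inside $\mathcal U$, standard Bando--Shi--Hamilton derivative bounds give uniform $C^\infty$ estimates, and a diagonal subsequence converges to a Ricci flow $g_\infty(t)$ on $(-\infty,0]$, extended by short-time existence to $(-\infty,T)$. The metric $g_\infty(0)$ sits at distance exactly $\delta$ from $\grf$, giving nontriviality. Applying the same distance bound backward along $g_\infty$ shows $\int_{-\infty}^0\|\partial_t g_\infty\|_{L^2}\,dt<\infty$, so $g_\infty(t)$ converges smoothly as $t\to-\infty$ to a Ricci-flat metric $g_*$ close to $\grf$.

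The main obstacle I anticipate is identifying $g_*$ with $\grf$ itself, rather than with some other nearby Ricci-flat metric on the moduli space. To overcome this I would combine the choice of $h$ transverse to the moduli with Theorem C (which blocks gauge drift) and an unstable-manifold/slice-theorem argument in the spirit of Ebin's theorem: the ancient solutions obtained this way form an unstable manifold tangent at $\grf$ to the positive eigenspace of $\Lap^L_{\grf}$, and any point on it produces an ancient Ricci flow whose backward limit is $\grf$ exactly, possibly after pulling back by a diffeomorphism isotopic to the identity.
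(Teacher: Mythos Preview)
Your overall strategy---perturb $\grf$ to metrics with $\lambda>0$, flow forward, show the exit times $T_i\to\infty$, shift time, and extract a limit via parabolic compactness---is exactly the paper's, and the energy-controls-distance computation you derive from Theorems~B and~C is the analogue of Lemma~\ref{energylemma} with the sign of $\lambda$ reversed. However, two linked points are not handled correctly.

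\textbf{Nontriviality.} The fact that $g_\infty(0)$ sits at $C^{2,\alpha}$-distance $\delta$ from $\grf$ does \emph{not} exclude that $g_\infty$ is a constant flow sitting at some other Ricci-flat metric on the moduli space at distance~$\delta$. What is actually needed is a uniform lower bound $\lambda(\tilde g_i(0))\geq c>0$, which passes to the limit and forces $g_\infty(0)$ to be non-Ricci-flat. The paper obtains this by reading the distance estimate backwards: after upgrading the $L^2$ length bound to $C^k$ via parabolic smoothing (Lemma~\ref{tgeq1est}; a step you omit), one has $\tfrac{\delta}{2}\leq d\bigl(\tilde g_i(0),\tilde g_i(-T_i+1)\bigr)\leq C\,\lambda(\tilde g_i(0))^{1/2}$, hence $\lambda(\tilde g_i(0))\geq c>0$.

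\textbf{Backward limit.} The obstacle you flag---that the backward limit $g_\ast$ might be a nearby Ricci-flat metric rather than $\grf$ itself---has a direct resolution that avoids any unstable-manifold or slice machinery. The paper applies a triangle inequality and the distance estimate to the \emph{approximating} flows rather than to the limit:
\[
d\bigl(\grf,g_\infty(t)\bigr)\;\leq\; d\bigl(\grf,\tilde g_i(-T_i)\bigr)+d\bigl(\tilde g_i(-T_i),\tilde g_i(t)\bigr)+d\bigl(\tilde g_i(t),g_\infty(t)\bigr)\;\leq\; o(1)+C\,\lambda(\tilde g_i(t))^{1/2}+o(1)
\]
as $i\to\infty$. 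The middle term is small for very negative $t$, uniformly in $i$, because Perelman's inequality $\tfrac{d\lambda}{dt}\geq\tfrac{2}{n}\lambda^2$ together with the uniform upper bound on $\lambda$ at $t=0$ forces $\lambda(\tilde g_i(t))\leq n/(2|t|)$. This gives $g_\infty(t)\to\grf$ directly. The key observation is that the approximating flows \emph{start} at $\grf$ up to $o(1)$, so there is no drift along the moduli space to undo; the transversality of $h$ plays no role, and the unstable-manifold argument you sketch is both unnecessary and not clearly implementable without first proving the theorem.
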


Note that the statement of Theorem F is much sharper than just some sort of instability/nonconvergence of flows starting nearby. Together with Theorem E it gives a quite complete picture of what could happen in the compact integrable case.\\

It should be straightforward to generalize Theorem F and our proof based on the {\L}ojasiewicz inequality to other flows. A generalization to the noncompact case would be very interesting, since for example the Riemannian Schwarzschild metric is linearly unstable \cite[Sec. 5]{GPY}.\\

There are various further applications of Theorem B and C. For example, the reader might wish to prove a dichotomy theorem in the case where $\lambda$ is not a local maximum, i.e. the flow starting near such a Ricci-flat metric either converges or runs away (compare with \cite[Thm. 2]{Si}). Finally, the nonintegrable case is discussed in Remark \ref{technicalremark}.

\begin{remark}
It suffices to check the condition $\Lap^L\leq 0$ on TT, i.e. on transverse traceless symmetric 2-tensors, since $\Lap^L$ is always nonpositive on the other components \cite{GIK}.
\end{remark}

\begin{technical}
We take care of the gauge directions using the Ebin-Palais slice theorem and of the kernel of $\Lap^L$ using the integrability assumption. The main technical step in the proof of Theorem A, is the estimate
\begin{equation}
\left|\tfrac{d^3}{d\eps^3}|_0\lambda(g+\eps h)\right|\leq C \norm{h}_{C^{2,\alpha}}\norm{h}_{H^1}^2\label{errorcubic}
\end{equation}
uniformly in a $C^{2,\alpha}$-neighborhood of $\grf$ (Proposition \ref{3rdvaroflambda}). This allows us to conclude that $\lambda$ is indeed maximal, since
\begin{equation}
D^2\lambda({\grf})[h,h]\leq -c  \norm{h}_{H^1}^2\label{partinttrick}
\end{equation}
on the space normal to the flat directions.\\
Regarding Theorem B, C and E, let us just emphasize that it was not at all straightforward to adapt Leon Simon's methods to the Ricci flow. For the numerous technical problems and their solutions we refer the reader to Section \ref{maintheorems} and Section \ref{stabandinstab}.  In particular, with Theorem C we find a way to handle the $\Hess_gf_g$-term, a term that is the source of many difficulties. The technical heart consists of Lemmas \ref{taylorsteady} and \ref{estimateofo}.\\
Finally, the Ricci flow in Theorem F is constructed by a suitable limiting process, and the main step is to prove that this limit is \emph{nontrivial}.\\
\end{technical}

This article is organized as follows: In Section \ref{variationalstructure}, we analyze the variational structure of $\lambda$, in particular, we prove (\ref{errorcubic}). In Section \ref{ebinandintegrability}, we recall some facts about the Ebin-Palais slice theorem and integrability. In Section \ref{maintheorems}, we prove A, B, C and D. Finally, as a consequence, we obtain the stability and instabilty results E and F in Section \ref{stabandinstab}.\\

\emph{Acknowledgements.} I would like to thank Tom Ilmanen for many interesting discussions, 
in particular for suggesting the {\L}ojasiewicz-Simon type argument. Moreover, I would like to thank Michael Struwe for his support, Richard Bamler and Reto M\"{u}ller for detailed comments on a preliminary version of this paper, and the Swiss National Science Foundation for partial financial support.

\section{The variational structure}\label{variationalstructure}
We will analyze the variational structure of $\lambda$ using eigenvalue perturbation theory \cite[Sec. XII]{RS}.\\
Let $(M,g)$ be a compact Riemannian manifold. Substituting $w=e^{-f/2}$ in (\ref{lambda}), we see that $\lambda(g)$ is the smallest eigenvalue of the Schr\"{o}dinger operator $H_{g}=-4\Lap_{g}+R_{g}$. The spectrum of $H_{g}$ consists only of real eigenvalues of finite multiplicity $\lambda(g)=\lambda_1(g)<\lambda_2(g)\leq\lambda_3(g)\leq\ldots$ tending to infinity and the smallest eigenvalue is simple. From the minimax characterization
\begin{equation}
\lambda_k(g)=\min_{\begin{subarray}{c}
W\subset C^\infty(M)\\
\dim W=k
\end{subarray}}\max_{\begin{subarray}{c}
w\in W\\
w\neq 0
\end{subarray}}\frac{\int_M\left(4|D w|_g^2+R_gw^2\right)dV_g}{\int_Mw^2dV_g},
\end{equation}
we see that $\lambda_k\!:\!\mathcal{M}(M)\to\mathbb{R}$ is continuous with respect to the $C^2$-topology on the space of metrics on $M$.
Along a variation, $g(\eps)=g+\eps h$, the smallest eigenvalue $\lambda(g(\eps))$ depends analytically on $\eps$ \cite[Sec. 7.I.2.2]{KL}. To analyze this $\eps$-dependence, it is convenient to study the resolvent $(\lambda-H_{g(\eps)})^{-1}$, defined for complex $\lambda$ outside the spectrum. Observe that\begin{equation}\label{pertform1}
P_{g(\eps)}=\tfrac{1}{2\pi i}\oint_{|\lambda-\lambda(g)|=r}(\lambda-H_{g(\eps)})^{-1}d\lambda
\end{equation}
is the projection to the one-dimensional $\lambda(g(\eps))$-eigenspace of $H_{g(\eps)}$. Here, $r$ is assumed to be large enough to encircle $\lambda(g(\eps))$, but small enough to stay away from the other eigenvalues. Thus
\begin{equation}
H_{g(\eps)} P_{g(\eps)} w_g=\lambda(g(\eps)) P_{g(\eps)} w_g,
\end{equation}
where $w_g$, called the ground-state in the following, is the unique positive $L^2(M,dV_g)$-normalized eigenfunction of $H_g$ with eigenvalue $\lambda(g)$. Thus, for small $\eps$, we obtain
\begin{equation}\label{pertform2}
\lambda(g(\eps))=\lambda(g)+\frac{\langle w_g ,(H_{g(\eps)}-H_g)P_{g(\eps)} w_g\rangle_{L^2(M,dV_g)}}{\langle w_g,P_{g(\eps)} w_g\rangle_{L^2(M,dV_g)}}.
\end{equation}
\begin{lemma}\label{perturbationlemma}
Let $(M,g)$ be a compact Riemannian manifold and $h$ a symmetric 2-tensor. Then the smallest eigenvalue $\lambda(g+\eps h)$ of the operator $H_{g+\eps h}=-4\Lap_{g+\eps h}+R_{g+\eps h}$ depends analytically on $\eps$ and the first three derivatives are given by the following formulas:
\begin{align}
&\tfrac{d}{d\eps}|_0\lambda(g+\eps h)=\langle w,H'[h] w\rangle,\label{pertseries1}\\
&\tfrac{d^2}{d\eps^2}|_0\lambda(g+\eps h)\nonumber\\
&\qquad=\langle w, H''[h,h]w\rangle+\tfrac{2}{2\pi i}\oint\langle w,H'[h](\lambda-H)^{-1}H'[h] w\rangle\tfrac{d\lambda}{\lambda-\lambda(g)},\label{pertseries2}\\
&\tfrac{d^3}{d\eps^3}|_0\lambda(g+\eps h)\nonumber\\
&\qquad\begin{array}{l}
=\langle w,H'''[h,h,h] w\rangle\\
\quad+\tfrac{6}{2\pi i}\oint \langle w,H'[h](\lambda-H)^{-1}H'[h](\lambda-H)^{-1} H'[h] w\rangle \tfrac{d\lambda}{\lambda-\lambda(g)}\\
\quad+\tfrac{3}{2\pi i}\oint \langle w, H'[h](\lambda-H)^{-1}H''[h,h] w\rangle \tfrac{d\lambda}{\lambda-\lambda(g)}\\
\quad+\tfrac{3}{2\pi i}\oint \langle w, H''[h,h](\lambda-H)^{-1}H'[h] w\rangle \tfrac{d\lambda}{\lambda-\lambda(g)}\\
\quad-\langle w,H'[h]w\rangle\tfrac{6}{2\pi i}\oint \langle w,H'[h](\lambda-H)^{-1} H'[h] w\rangle \tfrac{d\lambda}{(\lambda-\lambda(g))^2}.
\end{array}\label{pertseries3}
\end{align}
Here $w=w_g$ is the ground state of $H=H_g=-4\Lap_g+R_g$ and $H^{(k)}[h,\ldots,h]=\tfrac{d^k}{d\eps^k}|_0(-4\Lap_{g+\eps h}+R_{g+\eps h})$. The complex integrals are over a small circle around $\lambda(g)$ and $\langle\; , \, \rangle$ denotes the $L^2(M,dV_g)$ inner product.
\end{lemma}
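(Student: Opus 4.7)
The analyticity of $\eps\mapsto\lambda(g+\eps h)$ for small $\eps$ follows from standard analytic perturbation theory for families of closed self-adjoint operators \cite[Sec. XII]{RS}, applied after observing that the coefficients of $H_{g+\eps h}=-4\Lap_{g+\eps h}+R_{g+\eps h}$ depend polynomially (hence analytically) on $\eps$, and that $\lambda(g)$ is a simple eigenvalue separated from the rest of the spectrum. The explicit formulas (\ref{pertseries1})--(\ref{pertseries3}) will be obtained from the Rayleigh--Schr\"odinger representation (\ref{pertform2}), which I rewrite as
\[\lambda(g(\eps))-\lambda(g)=\frac{N(\eps)}{D(\eps)},\qquad N(\eps):=\langle w,(H_{g(\eps)}-H_g)P_{g(\eps)}w\rangle,\qquad D(\eps):=\langle w,P_{g(\eps)}w\rangle,\]
and then expand $N$ and $D$ as power series in $\eps$ using the Neumann expansion of the resolvent.

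Concretely, writing $V(\eps):=H_{g(\eps)}-H_g=\eps H'[h]+\tfrac{\eps^2}{2}H''[h,h]+\tfrac{\eps^3}{6}H'''[h,h,h]+O(\eps^4)$ and $R(\lambda):=(\lambda-H_g)^{-1}$, the Neumann series $(\lambda-H_{g(\eps)})^{-1}=R(\lambda)+\sum_{n\geq 1}R(\lambda)[V(\eps)R(\lambda)]^n$ inserted into (\ref{pertform1}) yields a convergent expansion of $P_{g(\eps)}$. The two identities I use repeatedly are
\[R(\lambda)w=\frac{w}{\lambda-\lambda(g)},\qquad \langle w,R(\lambda)u\rangle=\frac{\langle w,u\rangle}{\lambda-\lambda(g)},\]
both consequences of $H_g w=\lambda(g)w$ and self-adjointness of $H_g$; they collapse each outermost resolvent to a scalar factor. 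After this, the residue theorem evaluates $\tfrac{1}{2\pi i}\oint(\lambda-\lambda(g))^{-n}d\lambda$ as $1$ for $n=1$ and $0$ for $n\geq 2$.

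Collecting terms order by order, one finds: at $\eps^1$, only $\langle w,H'[h]w\rangle$, hence (\ref{pertseries1}). At $\eps^2$, the second-order part of $V$ produces $\tfrac{1}{2}\langle w,H''[h,h]w\rangle$ while the quadratic Neumann term contributes $\tfrac{1}{2\pi i}\oint\langle w,H'[h]R(\lambda)H'[h]w\rangle(\lambda-\lambda(g))^{-1}d\lambda$; the $O(\eps)$ correction to $D$ is $\propto\oint(\lambda-\lambda(g))^{-2}d\lambda=0$, so multiplying by $2!$ gives (\ref{pertseries2}). At $\eps^3$ the numerator $N_3$ has four pieces: the $H'''$-term with factor $\tfrac{1}{6}$, the triple sandwich $H'RH'RH'$ from the $P_2$-term with combinatorial factor $1$, and two mixed $H'\!\cdot\!H''$ and $H''\!\cdot\!H'$ contributions from the $P_1$-term each with factor $\tfrac{1}{2}$. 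The denominator first appears at order $\eps^2$ with $D_2=\tfrac{1}{2\pi i}\oint\langle w,H'[h]R(\lambda)H'[h]w\rangle(\lambda-\lambda(g))^{-2}d\lambda$, and expanding $N/D=N(1-D_2\eps^2+O(\eps^3))$ the $\eps^3$-coefficient is $N_3-N_1 D_2$. Multiplying by $3!=6$ gives exactly (\ref{pertseries3}), the last line being the contribution $-6\langle w,H'[h]w\rangle D_2$ from the denominator correction.

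The main technical hurdle is the combinatorial bookkeeping at third order. One has to correctly tally (i) the factorials $1/k!$ from Taylor-expanding $V(\eps)$, (ii) the number of orderings of $V$-insertions in $P_n(\eps)$ that contribute to a given total power of $\eps$, and (iii) the sign and numerical factor of the cross term from inverting $D(\eps)$. The last of these is the subtle point: the denominator-induced term is the only one carrying a double pole $(\lambda-\lambda(g))^{-2}$, and its correct sign and coefficient would be lost by a na\"{\i}ve Rayleigh--Schr\"odinger expansion that neglected the $\eps^2$-correction in $D$. Once these are tracked, formulas (\ref{pertseries1})--(\ref{pertseries3}) fall out directly.
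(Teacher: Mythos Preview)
Your proposal is correct and follows essentially the same route as the paper's proof: both expand $V(\eps)=H_{g(\eps)}-H$ and the resolvent via the Neumann series, insert into (\ref{pertform1})--(\ref{pertform2}), collapse the outer resolvents against $w$ using $R(\lambda)w=(\lambda-\lambda(g))^{-1}w$, and track the $\eps^2$-correction in the denominator $D(\eps)=\langle w,P_{g(\eps)}w\rangle$ to produce the final double-pole term in (\ref{pertseries3}); analyticity is justified in both cases via the Kato--Rellich framework of \cite[Sec.~XII]{RS}. Your combinatorial accounting ($N_3$ with weights $\tfrac16,1,\tfrac12,\tfrac12$ and the cross term $-N_1D_2$, all multiplied by $3!$) matches the paper's Appendix~\ref{proofpertlemma} exactly.
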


The proof of Lemma \ref{perturbationlemma} can be found in Appendix \ref{proofpertlemma}, but let us illustrate here, where (\ref{pertseries2}) comes from. We differentiate (\ref{pertform2}) twice. To get a nonzero contribution when evaluated at $\eps=0$ the derivative has to hit $H_{g(\eps)}$ at least once, thus
\begin{multline}
\tfrac{d^2}{d\eps^2}|_0\lambda(g+\eps h)\\
=\langle w, H''[h,h]w\rangle+2\langle w, H'[h]P'[h]w\rangle-2\langle w, H'[h]w\rangle \langle w, P'[h]w\rangle,
\end{multline}
where we also used $P_gw_g=w_g$ and $\langle w_g,w_g\rangle_{L^2(M,dV_g)}=1$. Differentiating (\ref{pertform1}) and taking care of the operator ordering, we obtain
\begin{equation}
P'[h]=\tfrac{1}{2\pi i}\oint_{|\lambda-\lambda(g)|=r}(\lambda-H)^{-1}H'[h](\lambda-H)^{-1}d\lambda.
\end{equation}
Using $(\lambda-H)^{-1}w=(\lambda-\lambda(g))^{-1}w$ and the fact that $H$ is symmetric with respect to the $L^2$ inner product, Equation (\ref{pertseries2}) follows. In particular, observe that $\langle w, P'[h]w\rangle$ vanishes, since $\oint (\lambda-\lambda(g))^{-2}d\lambda=0$. The computation for (\ref{pertseries1}) and (\ref{pertseries3}) is similar and the differentiability and convergence can be justified, see Appendix \ref{proofpertlemma} for details.

From the usual formulas for the variation of the Laplacian and the scalar curvature (see e.g. \cite[Sec. 1.K]{Be}), we obtain
\begin{equation}\label{hprime}
H'[h]=4h\!:\!D^2+4\div h\!:\!D-2D\text{tr}h\!:\!D-\langle h,\Rc\rangle+\div\div h -\Lap\text{tr}h.
\end{equation}
Inserting this in (\ref{pertseries1}), substituting $w=e^{-f/2}$ and using partial integration gives Perelman's first variation formula
\begin{equation}\label{1stvaroflambda}
D\lambda(g)[h]=- \int_M\langle h,\Rc_g+\Hess_g f_g \rangle_g e^{-f_g}dV_g,
\end{equation}
where $f_g$ is the minimizer in (\ref{lambda}). Due to diffeomorphism invariance, $D\lambda(g)$ vanishes on Lie-derivatives, in particular
\begin{equation}\label{dlambdadiffinv}
\int_M\langle \Hess_g f_g,\Rc_g+\Hess_g f_g \rangle_g e^{-f_g}dV_g=0.
\end{equation}
The stationary points of $\lambda$ are precisely the Ricci-flat metrics (compactness is crucial here). At a Ricci-flat metric $\grf$ we have $\lambda(\grf)=0, D\lambda(\grf)=0$ and (compare with \cite{CHI})
\begin{equation}\label{2ndvaroflambda}
D^2\lambda(\grf)[h,h] = \left\{ \begin{array}{ll}
\frac{1}{2\mathrm{Vol}_{\grf}(M)}\int_M\langle h,\Lap^L_{\grf} h\rangle_{\grf} dV_{\grf} & h\in\ker\div_{\grf}, \\
0 & h\in\mathrm{im}\,\div_{\grf}^\ast.
\end{array} \right.
\end{equation}
This can also be computed using (\ref{pertseries2}) (see Appendix \ref{appendixsecondvariation}).

\begin{proposition}[Third variation of $\lambda$]\label{3rdvaroflambda}
Let $(M,g_0)$ be a compact Riemannian manifold. Then there exists a $C^{2,\alpha}$-neighborhood\, $\mathcal{U}_{g_0}\subset\mathcal{M}(M)$ of $g_0$ in the space of metrics on $M$ and a constant $C< \infty$ such that
\begin{equation}
\left|\tfrac{d^3}{d\eps^3}|_0\lambda(g+\eps h)\right|\leq C \norm{h}_{C^{2,\alpha}} \norm{h}_{H^1}^2
\end{equation}
for all $g\in\mathcal{U}_{g_0}$ and all $h\in C^\infty(S^2T^\ast M)$.
\end{proposition}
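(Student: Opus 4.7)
The plan is to take the explicit third-variation formula from Lemma \ref{perturbationlemma} and estimate each of its five summands by $C\norm{h}_{C^{2,\alpha}}\norm{h}_{H^1}^2$. The guiding principle is that although the operators $H'[h]$, $H''[h,h]$, $H'''[h,h,h]$ contain up to two derivatives of $h$ (cf.\ \eqref{hprime}), wherever a second derivative of $h$ appears, one can integrate by parts against the smooth factor it is paired with, leaving a polynomial in $h$ and $Dh$ only. H\"older's inequality then allocates one $h$-factor to $\norm{h}_{C^{2,\alpha}}$ and the remaining two to $\norm{h}_{H^1}$.

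First I would collect the uniform inputs valid on the $C^{2,\alpha}$-neighborhood $\mathcal{U}_{g_0}$: elliptic regularity for $H_g w_g = \lambda(g) w_g$, combined with continuity and simplicity of $\lambda(g)$, yields uniform $C^{k,\alpha}$-bounds on the ground state $w_g$ for every $k$; continuity of the spectrum yields a uniform spectral gap so that a single contour $|\lambda-\lambda(g_0)|=r$ encircles only $\lambda(g)$ for every $g\in\mathcal{U}_{g_0}$; and standard elliptic theory yields, for $\lambda$ on this contour, a bounded map $(\lambda-H_g)^{-1}:H^{-1}\to H^1$ on the orthogonal complement of the ground-state projection, with constants independent of $g$ and $\lambda$.

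Next I would bound each of the five summands. The pointwise term $\langle w,H'''[h,h,h]w\rangle$ is an integral of a cubic polynomial in $h, Dh, D^2 h$ contracted with $w, Dw, D^2 w$ and curvature; integrating by parts on each summand carrying a $D^2 h$ moves that derivative onto the smooth factor, reducing to a cubic polynomial in $h$ and $Dh$ only, bounded by $\norm{h}_{L^\infty}\norm{h}_{H^1}^2$. For the triple-resolvent term, set $u_1=(\lambda-H)^{-1}H'[h]w$ and $u_2=(\lambda-H)^{-1}H'[h]u_1$. Two integration-by-parts identities for $H'[h]$ are useful: a balanced form $|\langle\phi,H'[h]\psi\rangle|\leq C\norm{h}_{L^\infty}\norm{\phi}_{H^1}\norm{\psi}_{H^1}$, and, when $\psi$ is smooth, $|\langle\phi,H'[h]\psi\rangle|\leq C\norm{h}_{H^1}\norm{\phi}_{H^1}\norm{\psi}_{C^1}$. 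The second gives $\norm{u_1}_{H^1}\leq C\norm{h}_{H^1}$; the balanced form gives $\norm{u_2}_{H^1}\leq C\norm{h}_{L^\infty}\norm{u_1}_{H^1}\leq C\norm{h}_{L^\infty}\norm{h}_{H^1}$; and self-adjointness plus the second form give $|\langle w,H'[h]u_2\rangle|=|\langle H'[h]w,u_2\rangle|\leq C\norm{h}_{H^1}\norm{u_2}_{H^1}\leq C\norm{h}_{L^\infty}\norm{h}_{H^1}^2$. Since the contour integral contributes only a bounded factor and $\norm{h}_{L^\infty}\leq C\norm{h}_{C^{2,\alpha}}$, this summand obeys the claimed bound. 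The mixed $H''$-terms and the double-pole term are handled the same way, estimating $H''[h,h]\psi$ in $H^{-1}$ by $C\norm{h}_{C^{2,\alpha}}\norm{h}_{H^1}\norm{\psi}_{H^1}$ and using the first-variation bound $|\langle w,H'[h]w\rangle|\leq C\norm{h}_{L^2}$ for the outer factor in the double-pole term.

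The main obstacle will be the careful bookkeeping of these integrations by parts, in particular ensuring that whenever $H'[h]$ or $H''[h,h]$ is applied to a function like $u_1$ or $u_2$ of only $H^1$-regularity one uses the balanced form and never demands $D^2 u_1$, together with verifying that the resolvent, spectral-gap, and ground-state estimates are all uniform in $g\in\mathcal{U}_{g_0}$ under $C^{2,\alpha}$-small perturbations. Once these are in place the five-term estimate is systematic.
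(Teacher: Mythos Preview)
Your plan is essentially the paper's own proof: estimate the five summands of Lemma~\ref{perturbationlemma} term by term, using integration by parts to remove second derivatives of $h$ and uniform resolvent bounds on the contour. The only structural difference is that you invoke the $H^{-1}\!\to H^1$ mapping property of $(\lambda-H_g)^{-1}$, whereas the paper uses the dual pair $H^{-2}\!\to L^2$ (their functions $v_\lambda[h]$ live in $L^2$, your $u_i$ in $H^1$); either choice works.

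Two small corrections are needed. First, your ``balanced form'' $|\langle\phi,H'[h]\psi\rangle|\leq C\norm{h}_{L^\infty}\norm{\phi}_{H^1}\norm{\psi}_{H^1}$ is not true as stated: $H'[h]$ contains the first-order-in-$h$ terms $4\div h\cdot D$, $-2D\tr h\cdot D$, $\div\div h$, $-\Lap\tr h$ (see \eqref{hprime}), and for $\phi,\psi\in H^1$ these force $\norm{h}_{C^1}$ (equivalently $\norm{h}_{W^{1,\infty}}$) on the right-hand side, not $\norm{h}_{L^\infty}$. This is harmless for the conclusion since $\norm{h}_{C^1}\leq\norm{h}_{C^{2,\alpha}}$, but your chain $\norm{u_2}_{H^1}\leq C\norm{h}_{L^\infty}\norm{u_1}_{H^1}$ should read $\norm{u_2}_{H^1}\leq C\norm{h}_{C^1}\norm{u_1}_{H^1}$. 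Second, $H'[h]$ is \emph{not} self-adjoint with respect to the fixed inner product $L^2(M,dV_g)$ (only $H_{g(\eps)}$ is, with respect to $dV_{g(\eps)}$), so the equality $\langle w,H'[h]u_2\rangle=\langle H'[h]w,u_2\rangle$ fails; replace it by the direct integration-by-parts estimate $|\langle w,H'[h]u_2\rangle|\leq C\norm{h}_{H^1}\norm{u_2}_{H^1}$, which follows because $w$ is smooth and absorbs all derivatives shed from $h$ and $u_2$. Finally, the qualifier ``on the orthogonal complement of the ground-state projection'' is unnecessary: for $\lambda$ on the contour (hence outside the spectrum) the resolvent is bounded on the full space.
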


\begin{proof}
We have $C^{2,\alpha}$-bounds for the ground-state $w_g$ of $H_g$, which we will often use in the following. Let us estimate (\ref{pertseries3}) term by term. The first term has the schematic form
\begin{align}
&\langle w, H'''[h,h,h]w\rangle\\
&\qquad=\langle w, (\Rm h h h + h h D h D + h D h D h+h h h D^2 + h h D^2h) w\rangle.\nonumber
\end{align}
Since $M$ is compact, we get the estimate
\begin{equation}
|\langle w_g, H_g'''[h,h,h]w_g\rangle |\leq C \norm{h}_{C^2} \norm{h}_{H^1}^2.
\end{equation}
Let us continue with the second term,
\begin{equation}\label{secondterm}
\oint_{|\lambda-\lambda(g)|=r}\langle w_g,H_g'[h](\lambda-H_g)^{-1}H_g'[h](\lambda-H_g)^{-1}H_g'[h] w_g\rangle\tfrac{d\lambda}{\lambda-\lambda(g)}.
\end{equation}
Recall that for $|\lambda-\lambda(g)|=r$ the operator $\lambda-H_g:C^{\infty}(M)\to C^{\infty}(M)$
is indeed invertible and that $H_g$ is symmetric with respect to the $L^2(M,dV_g)$-inner product. Let us insert the left and right
\begin{equation}\label{hprime2}
H'[h]=\Rc\, h +D h D + h D^2  +D^2 h
\end{equation}
in (\ref{secondterm}). By partial integration, it can be brought into the form
\begin{equation}
\oint_{|\lambda-\lambda(g)|=r}\langle v_{\bar{\lambda}}[h],H'[h]v_\lambda[h] \rangle\tfrac{d\lambda}{\lambda-\lambda(g)},
\end{equation}
where
\begin{equation}
v_\lambda[h]=(\lambda-H_g)^{-1}\left( \Rc\, wh+ D w D h+D^2 w h  + w D^2h\right).
\end{equation}
We have the elliptic estimate
\begin{equation}\label{pertest}
\norm{v_\lambda[h]}_{L^2}\leq C \norm{h}_{L^2}.
\end{equation}
Indeed $(\lambda-H_g)^{-1}\!:H^{-2}(M)\to L^2(M)$ is well defined and continuous, since it is the dual of the continuous map $(\lambda-H_g)^{-1}\!:L^2(M)\to H^2(M)$. The constant in (\ref{pertest}) can be chosen uniformly for all $\lambda$ on the circle around $\lambda(g)$, since we have a lower bound for the distance between this circle and the spectrum of $H_g$.
Finally, we insert the middle $H'[h]$ and take care of $hD^2$ by partial integration. Putting everything together, we obtain
\begin{equation}
\left| \oint \langle w,H'[h](\lambda-H)^{-1}H'[h](\lambda-H)^{-1}H'[h] w\rangle\tfrac{d\lambda}{\lambda-\lambda(g)} \right| \leq C \norm{h}_{C^2}\norm{h}_{H^1}^2.
\end{equation}
To continue, inserting (\ref{hprime2}) and
\begin{equation}
H''[h,h]=\Rm hh+hhD^2+hD hD+hD^2h+D hD h
\end{equation}
and using partial integration, the third and the fourth term can be brought into the form
\begin{equation}
\oint \langle \Rm whh+D^2whh+D whD h + whD^2 h +whD h D,v_{\lambda\text{ or }\bar{\lambda}}[h]\rangle\tfrac{d\lambda}{\lambda-\lambda(g)}.
\end{equation}
With the $L^2$-estimates, this can be bounded by $C\norm{h}_{C^2}\norm{h}_{H^1}^2$.\\
For the last term, note that
\begin{equation}
|\langle w,H'[h]w\rangle|\leq C \norm{h}_{C^2}.
\end{equation}
Inserting $H'[h]$ and taking care of $h D^2+D^2 h$ by partial integration the last integral can be estimated by
\begin{equation}
\left|\oint \langle w,H'[h]v_\lambda[h]\rangle \tfrac{d\lambda}{(\lambda-\lambda(g))^2}\right|\leq C \norm{h}_{H^1}^2.
\end{equation}
Finally, by Lemma \ref{boundonw} below, we have
$\norm{w_g}_{C^{2,\alpha}} \leq C$ uniformly for all $g$ in a $C^{2,\alpha}$-neighborhood $\mathcal{U}_{g_0}\subset\mathcal{M}(M)$ of $g_0$. This uniform bound and the continuity of eigenvalues discussed at the beginning of Section \ref{variationalstructure} show that all the above estimates go through uniformly in a small enough neighborhood $\mathcal{U}_{g_0}$ of $g_0$.
\end{proof}

\begin{lemma}\label{boundonw}
Let $(M,g_0)$ be a compact Riemannian manifold. Then there exists a $C^{2,\alpha}$-neighborhood\, $\mathcal{U}_{g_0}\subset\mathcal{M}(M)$ of $g_0$ and a constant $C<\infty$ such that
\begin{equation}
\norm{w_g}_{C^{2,\alpha}}\leq C
\end{equation}
for all $g\in\mathcal{U}_{g_0}$, where $w_g$ denotes the ground state of $H_g=-4\Lap_g+R_g$.
\end{lemma}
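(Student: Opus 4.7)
The plan is to apply standard elliptic regularity to the eigenfunction equation
\[ -4\Lap_g w_g + R_g w_g = \lambda(g)\, w_g, \qquad \int_M w_g^2 \, dV_g = 1, \qquad w_g > 0, \]
and to verify that all relevant constants can be chosen uniformly for $g$ in a sufficiently small $C^{2,\alpha}$-neighborhood $\mathcal{U}_{g_0}$ of $g_0$. The continuity of $\lambda$ recorded at the beginning of Section \ref{variationalstructure} gives $|\lambda(g)|\leq C$ uniformly on $\mathcal{U}_{g_0}$, and $C^{2,\alpha}$-closeness to $g_0$ gives uniform control of the ellipticity constants of $g^{ij}$, the $C^{1,\alpha}$-norms of the coefficients of $\Lap_g$, and the $C^{0,\alpha}$-norm of $R_g$.

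First I would derive a uniform $L^\infty$-bound on $w_g$. Writing the equation as $-4\Lap_g w_g = (\lambda(g) - R_g)\, w_g$ and applying Moser iteration to the positive function $w_g$ on the compact manifold $M$ yields
\[ \|w_g\|_{L^\infty(M)} \leq C \|w_g\|_{L^2(M)} = C, \]
with a constant depending only on the ellipticity constants of $g$, on $\|R_g\|_{L^\infty}$ and $|\lambda(g)|$, and on the Sobolev and volume constants of $(M,g)$—all controlled on $\mathcal{U}_{g_0}$.

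Next I would bootstrap. Since $(\lambda(g) - R_g)\, w_g \in L^p$ uniformly for every $p<\infty$, interior and global $L^p$-elliptic estimates for the operator $\Lap_g$ give $\|w_g\|_{W^{2,p}}\leq C_p$ uniformly. Choosing $p > n/(1-\alpha)$, Morrey's embedding yields a uniform $C^{1,\alpha}$-bound. In particular $(\lambda(g)-R_g)\, w_g$ now lies in $C^{0,\alpha}(M)$ with a uniform bound, and Schauder estimates for the elliptic operator $\Lap_g$ with $C^{1,\alpha}$-coefficients deliver the desired estimate $\|w_g\|_{C^{2,\alpha}}\leq C$ on $\mathcal{U}_{g_0}$.

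The only point requiring care is the uniformity of the constants in each step, and this is in my view the real content of the lemma, though it is routine. The constants in Moser iteration, Calder\'{o}n--Zygmund $L^p$-theory, and Schauder estimates all depend continuously on the ellipticity constant and on the relevant $C^{0,\alpha}$- and $L^\infty$-norms of the coefficients of the operator, which in turn depend continuously on $g$ in the $C^{2,\alpha}$-topology. Shrinking $\mathcal{U}_{g_0}$ if necessary, they are therefore uniformly bounded.
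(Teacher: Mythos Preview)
Your proposal is correct and follows essentially the same route as the paper: the paper applies DeGiorgi--Nash--Moser and Schauder estimates to the eigenfunction equation with the $L^2$-normalization, observing that $\lambda(g)$, $R_g$, and the coefficients of $\Lap_g$ are uniformly controlled in a $C^{2,\alpha}$-neighborhood of $g_0$. Your intermediate bootstrap through $W^{2,p}$ and $C^{1,\alpha}$ is a slightly more explicit version of what the paper compresses into a single line.
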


\begin{proof}
By definition of the ground state,
\begin{equation}
(-4\Lap_g+R_g-\lambda(g))w_g=0,\qquad \norm{w_g}_{L^2(M,dV_g)}=1.
\end{equation}
By DeGiorgi-Nash-Moser and Schauder estimates \cite[Thm. 8.17, Thm. 6.2]{GT}
\begin{equation}
\norm{w_g}_{C^{2,\alpha}}\leq C\norm{w_g}_{L^2}\leq C.
\end{equation}
Here, for definiteness, we define the norms using the background metric $g_0$. The constants $\lambda(g)$ are uniformly bounded by the continuity of eigenvalues and we also have a uniform $C^{0,\alpha}$-bound for the coefficient $R_g$ and good control over $\Lap_g$. Thus, the estimates are uniform in a $C^{2,\alpha}$-neighborhood of $g_0$.
\end{proof}

\section{The Ebin-Palais slice theorem and integrability}\label{ebinandintegrability}

Let us recall some facts from \cite{Eb}. Fix a compact manifold $M$. The group of diffeomorphisms $\mathcal{D}(M)$ acts on the space of metrics $\mathcal{M}(M)\subset C^\infty(S^2T^\ast M)$ by pullback. Fix $g_0\in\mathcal{M}(M)$. Since $\div^\ast_{g_0}$ is overdetermined elliptic we have the $L^2$-orthogonal decomposition:
\begin{equation}\label{infinitesimaldecomp}
C^\infty(S^2T^\ast M)=\ker\div_{g_0}\oplus\text{im}\,\div^\ast_{g_0}.
\end{equation}
Let $\mathcal{O}_{g_0}\subset\mathcal{M}(M)$ be the orbit of $g_0$ under the action of $\mathcal{D}(M)$. By the Ebin-Palais slice theorem, there exists a slice $\mathcal{S}_{g_0}$ for the action of $\mathcal{D}(M)$ on $\mathcal{M}(M)$. In particular, $\mathcal{M}(M)\cong_{\mathrm{loc}} \mathcal{S}_{g_0}\times \mathcal{O}_{g_0}$ is locally a product near $g_0$ (in the sense of inverse limit Banach manifolds) and the induced decomposition of $T_{g_0}\mathcal{M}(M)=C^\infty(S^2T^\ast M)$ is given by (\ref{infinitesimaldecomp}). We will only use the following part of the theorem.

\begin{theorem}[{Ebin-Palais \cite{Eb}}]\label{ebinslicethm}
Let M be a compact manifold and $\mathcal{M}(M)$ the space of metrics on $M$.\\
Then for every metric $g_0\in\mathcal{M}(M)$, there exists a $C^{2,\alpha}$-neighborhood $\mathcal{U}_{g_0}\subset\mathcal{M}(M)$ of $g_0$, such that every metric $g\in\mathcal{U}_{g_0}$ can be written as $g=\varphi^\ast \hat{g}$ for some diffeomorphism $\varphi\in\mathcal{D}(M)$ and some metric $\hat{g}\in\mathcal{S}_{g_0}= (g_0+\ker\div_{g_0})\cap \mathcal{U}_{g_0}$.
\end{theorem}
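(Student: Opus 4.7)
The plan is to apply the implicit function theorem to enforce the slice condition $\hat{g} \in \mathcal{S}_{g_0}$, which reads simply $\div_{g_0}(\hat{g}-g_0)=0$. All spaces are set up as Banach manifolds using Hölder norms, with one extra derivative on the diffeomorphism side to absorb the derivative loss of pullback.

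Let $V \subset C^{3,\alpha}(TM)$ be the $L^2_{g_0}$-orthogonal complement of the (finite-dimensional) space of Killing fields of $g_0$, and parametrize diffeomorphisms near the identity by $X \in V$ via $\varphi_X(p) = \exp^{g_0}_p(X(p))$. Define
\begin{equation*}
F\colon V \times \mathcal{M}^{2,\alpha}(M) \longrightarrow C^{1,\alpha}(T^*M),\qquad F(X,g)=\div_{g_0}\bigl(\varphi_X^{\,*} g - g_0\bigr).
\end{equation*}
Then $F(0,g_0)=0$, and a zero of $F$ gives $\hat g := \varphi_X^{\,*}g \in \mathcal{S}_{g_0}$, hence $g = \varphi^* \hat g$ with $\varphi = \varphi_X^{-1}$.

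The key step is to show that $D_X F(0,g_0)$ is an isomorphism onto its (closed, complemented) image. For $Y\in V$,
\begin{equation*}
D_X F(0,g_0)[Y] = \div_{g_0}(\mathcal{L}_Y g_0) = 2\,\div_{g_0}\div_{g_0}^{*}(Y^\flat),
\end{equation*}
up to sign. The operator $\div_{g_0}\div_{g_0}^{*}$ on one-forms is self-adjoint, nonnegative, and second-order elliptic, with kernel exactly the (finite-dimensional) space of Killing one-forms dual to the Killing fields. By standard elliptic theory it therefore restricts to a Banach isomorphism from $V^\flat \subset C^{3,\alpha}(T^*M)$ onto its image in $C^{1,\alpha}(T^*M)$, which by (\ref{infinitesimaldecomp}) is precisely the image of $D_X F(0,g_0)$. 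The implicit function theorem now produces a smooth map $g\mapsto X(g)$, defined in some $C^{2,\alpha}$-neighborhood $\mathcal{U}_{g_0}$ of $g_0$ with $X(g_0)=0$ and $F(X(g),g)=0$, completing the proof.

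The main obstacle is functional-analytic rather than geometric, which is why Ebin's original argument requires nontrivial work. The diffeomorphism group is not a Banach Lie group in the classical sense, since composition and inversion are only continuous (not smooth) in a fixed Hölder topology, and pullback naively loses a derivative. The remedy is exactly the regularity balance used above --- $C^{3,\alpha}$ vector fields, $C^{2,\alpha}$ metrics, $C^{1,\alpha}$ one-forms --- chosen so that $F$ is of class $C^1$ as a map between the specified Banach spaces, which is all IFT requires. A secondary point is to realize the $L^2$-orthogonal decomposition (\ref{infinitesimaldecomp}) at the level of Hölder spaces; this follows from the overdetermined ellipticity of $\div_{g_0}^{*}$ already invoked in the excerpt, combined with the standard Hodge-type regularity for the elliptic operator $\div_{g_0}\div_{g_0}^{*}$.
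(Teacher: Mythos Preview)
The paper does not prove this theorem; it is quoted from Ebin \cite{Eb} (with the H\"older variant attributed to Palais in the subsequent remark), so there is no ``paper's own proof'' to compare against. Your sketch is the standard Ebin--Palais argument via the implicit function theorem and is correct in outline.

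Two small points are worth tightening. First, for the implicit function theorem to apply directly you need $D_XF(0,g_0)$ to be a linear isomorphism onto the \emph{codomain}, not merely onto its image. The fix is to take the codomain of $F$ to be the closed subspace $\mathrm{im}\,\div_{g_0}\subset C^{1,\alpha}(T^*M)$, i.e.\ the H\"older realization of the $L^2$-orthogonal complement of the Killing one-forms; since $F$ factors through $\div_{g_0}$ it always lands there, and on that codomain $D_XF(0,g_0)=2\,\div_{g_0}\div_{g_0}^*|_{V^\flat}$ is a genuine Banach isomorphism by elliptic theory. Second, your argument as written produces $\varphi\in C^{3,\alpha}$ and $\hat g\in C^{2,\alpha}$, whereas the statement asks for $\varphi\in\mathcal{D}(M)$ and $\hat g\in\mathcal{M}(M)$ smooth. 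This is handled by the usual bootstrap: run the same IFT argument with $C^{k+1,\alpha}$ vector fields and $C^{k,\alpha}$ metrics for every $k\geq 2$; by uniqueness in the implicit function theorem the resulting maps $g\mapsto X(g)$ agree on their common domain, so for smooth $g$ one obtains $X(g)\in\bigcap_k C^{k,\alpha}=C^\infty$ and hence smooth $\varphi$ and $\hat g$. This is precisely the ILH/tame-Fr\'echet philosophy alluded to in the paper's parenthetical ``in the sense of inverse limit Banach manifolds''.
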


\begin{remark}
Ebin uses Sobolev spaces, Palais uses H\"{o}lder spaces. Moreover, Ebin uses the exponential map $\text{Exp}_g$ of the $L^2$-metric on $\mathcal{M}(M)$ to construct his slice. Palais uses the map $E_g(h)=g+h$ and thus gets an \emph{affine} slice $\mathcal{S}_{g_0}\subset (g_0+\ker\div_{g_0}) \cap \mathcal{M}(M)$ (the crucial property for constructing the slice is that the exponential map is equivariant, i.e. $\varphi^\ast(\text{Exp}_gh)$=$\text{Exp}_{\varphi^\ast g}\varphi^\ast h$, which is true for the `exponential map' $E$, since the action is linear).
\end{remark}

Let us now discuss our integrability assumption, following \cite[Sec. 12]{Be}.
\begin{definition}
Let $M$ be compact and $\grf\in\mathcal{M}(M)$ Ricci-flat. We call
\begin{equation}
I_{\grf}=\{ h\in \ker\div_{\grf};\; D\Rc(\grf)[h]=0\}
\end{equation}
the space of infinitesimal Ricci-flat deformations of $\grf$ and
\begin{equation}
\mathcal{P}_{\grf}=\{ \bar{g}\in\mathcal{S}_{\grf};\; \Rc_{\bar{g}}=0\}
\end{equation}
the premoduli space of Ricci-flat metrics near $\grf$ (the true moduli space is modeled on $\mathcal{P}_{\grf}/\mathrm{Isom}_{\grf}$).
\end{definition}

\begin{lemma}\label{integrability1}
Let $(M,\grf)$ compact, Ricci-flat. Then $I_{\grf}=\mathbb{R}\grf\oplus K_{\grf}$, where
\begin{equation}
K_{\grf}=\{h\in C^\infty(S^2T^\ast M);\; \div_{\grf} h=0,\, \mathrm{tr}_{\grf} h=0,\, \Lap^L_{\grf} h=0\}.
\end{equation}
\end{lemma}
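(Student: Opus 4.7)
The plan is to show $I_{\grf} = \mathbb{R}\grf \oplus K_{\grf}$ by proving both set inclusions and then directness. First, I would verify the easy inclusion $\mathbb{R}\grf + K_{\grf} \subseteq I_{\grf}$. Scale invariance of Ricci gives $D\Rc(\grf)[\grf] = \tfrac{d}{d\eps}|_{\eps=0}\Rc_{(1+\eps)\grf} = 0$, and $\div_{\grf}\grf = 0$ because $\grf$ is parallel for its own Levi-Civita connection; hence $\mathbb{R}\grf \subseteq I_{\grf}$. For $K_{\grf} \subseteq I_{\grf}$ I would invoke the standard formula for the linearization of Ricci curvature (cf.\ \cite[Ch.~1]{Be}), which at a Ricci-flat metric $\grf$ with $\div_{\grf} h = 0$ collapses to
$$2\,D\Rc(\grf)[h] = -\Lap^L_{\grf} h - \Hess_{\grf}\tr_{\grf} h,$$
so that both terms vanish when $h \in K_{\grf}$.

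The main step is the reverse inclusion, and the key idea is to first extract scalar information by tracing. For $h \in I_{\grf}$, the standard linearization of scalar curvature reads
$$D R(g)[h] = -\Lap_g \tr_g h + \div_g \div_g h - \langle \Rc_g, h\rangle_g,$$
which at $\grf$ with $\Rc_{\grf}=0$ and $\div_{\grf} h = 0$ reduces to $DR(\grf)[h] = -\Lap_{\grf}\tr_{\grf} h$. On the other hand, $DR(\grf)[h] = \tr_{\grf} D\Rc(\grf)[h] - \langle h,\Rc_{\grf}\rangle_{\grf} = 0$ since $D\Rc(\grf)[h]=0$ and $\Rc_{\grf}=0$. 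Hence $\Lap_{\grf}\tr_{\grf} h = 0$, and compactness of $M$ forces $\tr_{\grf} h \equiv nc$ for some constant $c \in \mathbb{R}$. Setting $h_0 := h - c\grf$, one has $\tr_{\grf} h_0 = 0$, $\div_{\grf} h_0 = 0$ (since $\div_{\grf}\grf = 0$), and $D\Rc(\grf)[h_0] = D\Rc(\grf)[h] - c\,D\Rc(\grf)[\grf] = 0$ by linearity. Feeding $h_0$ back into the Ricci-linearization formula from the first paragraph, both summands drop, leaving $\Lap^L_{\grf} h_0 = 0$. Thus $h_0 \in K_{\grf}$ and $h = c\grf + h_0 \in \mathbb{R}\grf + K_{\grf}$.

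Directness is immediate: if $c\grf \in K_{\grf}$ then $0 = \tr_{\grf}(c\grf) = nc$, so $c = 0$. The only genuinely delicate point in the whole argument is bookkeeping the sign conventions needed to rewrite the Ricci-linearization formula (stated in \cite{Be} using the rough Laplacian $\nabla^{\ast}\nabla$) in terms of the paper's convention $\Lap^L = \Lap + 2\Rm$ with $\Lap$ the analyst's Laplacian; everything else is driven by the two classical linearization formulas together with the observation that harmonic functions on a compact manifold are constant.
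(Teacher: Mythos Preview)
Your proof is correct and follows essentially the same approach as the paper: both use that on divergence-free tensors at a Ricci-flat metric the linearization of Ricci reduces to $-\tfrac{1}{2}(\Lap^L h + \Hess\tr h)$, deduce that $\tr h$ is harmonic hence constant, and then split $h$ into its pure-trace part in $\mathbb{R}\grf$ and its trace-free remainder in $K_{\grf}$. The only cosmetic difference is that the paper obtains $\Lap\tr h=0$ by tracing the Ricci-linearization equation directly, whereas you route this through the scalar-curvature linearization, which is the same computation.
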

\begin{proof}
On transverse symmetric 2-tensors, the linearization of Ricci is proportional to $\Lap^L +D^2\circ\text{tr}$.
Thus for $h\in I_{\grf}$, we have $\Lap^Lh +D^2 \text{tr }h=0$. Taking the trace, we get $\Lap \text{tr }h=0$, thus $\text{tr }h=c$ and $\Lap^Lh=0$. Therefore
\begin{equation}
h=\tfrac{c}{n}\grf+(h-\tfrac{c}{n}\grf)\in \mathbb{R}\grf\oplus K_{\grf}.
\end{equation}
The converse inclusion is clear.
\end{proof}

\begin{definition}[Integrability]
Let $M$ be compact and $\grf\in\mathcal{M}(M)$ be Ricci-flat. We say that all infinitesimal Ricci-flat deformations of $\grf$ are integrable if there is a smooth familiy $g_h(t)\in\mathcal{M}(M)$ of Ricci-flat metrics with $g_h(0)=\grf$ and $\dot{g}_h(0)=h$, defined for all $h\in I_{\grf}$ with norm less then one and all $t\in (-\eps,\eps )$.
\end{definition}

\begin{proposition}\label{integrability2}
Let $M$ be compact and $\grf\in\mathcal{M}(M)$ be Ricci-flat. If all infinitesimal Ricci-flat deformations of $\grf$ are integrable, then $\mathcal{P}_{\grf}$ is a manifold near $\grf$ with $T_{\grf}\mathcal{P}_{\grf}=I_{\grf}$.
\end{proposition}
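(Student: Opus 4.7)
The plan is to parametrize $\mathcal{P}_{\grf}$ near $\grf$ by a neighborhood of $0 \in I_{\grf}$, using the integrability assumption to produce Ricci-flat metrics and Theorem \ref{ebinslicethm} to project them into $\mathcal{S}_{\grf}$. The integrability family $g_h(t)$ satisfies $g_h(0) = \grf$ and $\dot g_h(0) = h$; after an obvious rescaling I obtain a smooth map $\Psi: U \to \mathcal{M}(M)$ from a neighborhood of $0 \in I_{\grf}$ with $\Psi(0) = \grf$ and $D\Psi(0) = \mathrm{id}_{I_{\grf}}$, taking values in Ricci-flat metrics. Theorem \ref{ebinslicethm} then factors $\Psi(h) = \varphi_h^\ast \Phi(h)$ uniquely with $\Phi(h) \in \mathcal{S}_{\grf}$ and $\varphi_h \in \mathcal{D}(M)$ close to the identity, smoothly in $h$; diffeomorphism invariance of Ricci gives $\Phi(h) \in \mathcal{P}_{\grf}$.

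Next I would identify $D\Phi(0)$. Differentiating $\Psi(h) = \varphi_h^\ast \Phi(h)$ at $h = 0$ yields $h = D\Phi(0)[h] + \mathcal{L}_{X_h} \grf$, where $X_h$ is the initial velocity of $s \mapsto \varphi_{sh}$. Since $D\Phi(0)[h] \in \ker\div_{\grf}$ and $\mathcal{L}_{X_h} \grf \in \mathrm{im}\,\div^\ast_{\grf}$, uniqueness in the splitting (\ref{infinitesimaldecomp}) together with $h \in I_{\grf} \subset \ker\div_{\grf}$ forces $D\Phi(0)[h] = h$. The inverse function theorem in a $C^{2,\alpha}$-Banach setting then exhibits $\Phi$ as a local diffeomorphism from a neighborhood of $0 \in I_{\grf}$ onto a finite-dimensional smooth submanifold $W \subset \mathcal{S}_{\grf}$ consisting of Ricci-flat metrics, with $T_{\grf} W = I_{\grf}$.

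The main obstacle is showing $W$ exhausts $\mathcal{P}_{\grf}$ near $\grf$, i.e.\ ruling out other Ricci-flat metrics in the slice. I would fix an $L^2$-orthogonal complement $C$ of the finite-dimensional subspace $I_{\grf}$ inside $\ker\div_{\grf}$ and decompose $\Phi(h) - \grf = h + \kappa(h)$ with $\kappa(h) \in C$, $\kappa(0) = 0$, $D\kappa(0) = 0$. Any $\bar g \in \mathcal{P}_{\grf}$ close to $\grf$ splits uniquely as $\bar g = \grf + h + k$ with $h \in I_{\grf}$ and $k \in C$, so that both $k$ and $\kappa(h)$ solve $G_h(\cdot) := \Rc_{\grf + h + \cdot} = 0$. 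By Lemma \ref{integrability1}, $I_{\grf} = \ker D\Rc(\grf)|_{\ker\div_{\grf}}$, hence $DG_0(0) = D\Rc(\grf)|_C$ is injective; standard Fredholm properties of the linearized Ricci operator on $\ker\div_{\grf}$ then yield a uniform lower bound $\|DG_h(\kappa(h))\, k\| \geq c\|k\|$ for small $h$, and the inverse function theorem applied to $G_h$ forces $k = \kappa(h)$, so $\bar g = \Phi(h) \in W$.
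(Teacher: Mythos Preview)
Your approach is the mirror image of the paper's. The paper first invokes Koiso's theorem (deferring to \cite[Thm.~12.49]{Be}) to obtain a smooth finite-dimensional manifold $\mathcal{Z}_{\grf}\subset\mathcal{S}_{\grf}$ that \emph{contains} $\mathcal{P}_{\grf}$ and satisfies $T_{\grf}\mathcal{Z}_{\grf}=I_{\grf}$; integrability is then used to show that the Ricci-flat locus fills out all of $\mathcal{Z}_{\grf}$. You instead use integrability first to build $W\subset\mathcal{P}_{\grf}$, and your final paragraph---injectivity of $D\Rc(\grf)|_C$ and its stability under perturbation---does the job of ruling out anything outside $W$. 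Both routes ultimately rest on the same overdetermined-elliptic estimate for the linearized Ricci operator on $\ker\div_{\grf}$; the paper hides it in the citation, you invoke it as ``standard Fredholm properties''.

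There is, however, a genuine gap in your first step. The ``obvious rescaling'' does not produce a smooth $\Psi$ with $\Psi(0)=\grf$ and $D\Psi(0)=\mathrm{id}_{I_{\grf}}$. If you set $\Psi(h)=g_{h}(t_0)$ for a fixed small $t_0$, then $\Psi(0)=g_0(t_0)$, and the definition of integrability does not force $g_0(t)\equiv\grf$ (only $g_0(0)=\grf$ and $\dot g_0(0)=0$); nor can one modify the family to achieve this without already knowing $\mathcal{P}_{\grf}$ is a manifold. If instead you take $\Psi(v)=g_{v/\lVert v\rVert}(\lVert v\rVert)$, then writing $g_h(t)=\grf+th+t^2\rho(h,t)$ gives $\Psi(v)=\grf+v+\lVert v\rVert^{2}\rho(v/\lVert v\rVert,\lVert v\rVert)$, which is $C^1$ at $v=0$ with derivative $\mathrm{id}$, but in general not $C^2$ there because $D^2\Psi$ involves $\partial_h^2\rho$ evaluated along $v/\lVert v\rVert$. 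You can push through with a $C^1$ immersion, but upgrading $W$ to a \emph{smooth} submanifold then requires exactly the Lyapunov--Schmidt reduction of Koiso you were trying to avoid. (A minor additional slip: the decomposition $\Phi(h)-\grf=h+\kappa(h)$ with $\kappa(h)\in C$ presumes the $I_{\grf}$-component of $\Phi(h)-\grf$ is exactly $h$; since $D\Phi(0)=\mathrm{id}$ this holds only to first order, so a finite-dimensional reparametrization of $h$ is needed first.)
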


\begin{proof}
As in the proof of Koiso's theorem, we construct a manifold $\mathcal{Z}_{\grf}\subset\mathcal{S}_{\grf}$ near $\grf$ that contains $\mathcal{P}_{\grf}$ and satisfies $T_{\grf}\mathcal{Z}_{\grf}=I_{\grf}$. Possibly after passing to smaller neighborhoods, we have $\mathcal{P}_{\grf}=\mathcal{Z}_{\grf}$ due to integrability (see \cite[Thm. 12.49]{Be} for details).
\end{proof}

\section{Local maxima, gradient inequality and transversality}\label{maintheorems}

In this section, we prove A, B, C and D.

\begin{proof}[Proof of Theorem A]
Let $\mathcal{U}_{\grf}\supset\mathcal{S}_{\grf}\supset\mathcal{P}_{\grf}$ be as in Section \ref{ebinandintegrability}. We divide the proof of the theorem into the following three steps, whose detailed proofs can be found below:
\begin{enumerate}[i.]
\item\label{stepi} For $\ker\div_{\grf} = T_{\grf}\mathcal{P}_{\grf}\oplus N_{\grf}$, where
\begin{equation}
N_{\grf}=\{h\in\ker\div_{\grf};\; \langle h,k\rangle_{L^2_{\grf}}=0\;\text{for all}\; k\in T_{\grf}\mathcal{P}_{\grf}\},
\end{equation}
the second variation $D^2\lambda(\grf)$ vanishes on the first summand and is strictly negative on the second one.
\item\label{stepii} By Taylor expansion with careful estimate of the error term, possibly after passing to smaller neighborhoods, $\lambda$ is nonpositive on $\mathcal{S}_{\grf}$ and vanishes only on $\mathcal{P}_{\grf}$.
\item\label{stepiii} The assertion of the theorem follows from the Ebin-Palais slice theorem and the diffeomorphism invariance of $\lambda$.
\end{enumerate}
\begin{proof}[Proof of \ref{stepi}]
Since $\grf$ is Ricci-flat, we have the $L^2$-orthogonal, $\Lap^L_{\grf}$-invariant decomposition \cite[Sec. 4]{GIK},
\begin{equation}
\ker\div_{\grf}=\mathbb{R}\grf\oplus \mathrm{im}(C_{\grf}) \oplus TT_{\grf},
\end{equation}
where $\mathbb{R}\grf$ describes scaling, $C_{\grf}u=(\Lap_{\grf} u)\grf-\Hess_{\grf}u$ describes the other conformal transformations (projected on $\ker\div_{\grf}$) and
\begin{equation}
TT_{\grf}=\{h\in C^\infty(S^2T^\ast M);\; \div_{\grf} h=0, \text{tr}_{\grf} h = 0\}
\end{equation}
denotes the space of transverse, traceless, symmetric 2-tensors.\\
Let us analyse the spectrum. The Lichnerowicz Laplacian $\Lap^L_{\grf}$ vanishes on $\mathbb{R}\grf$. It is strictly negative on $\text{im}(C)$, since $\Lap^L Cu=C\Lap u$. Indeed, taking the trace shows that the elements of the kernel of $C$ are harmonic and thus constant functions (the theorem is trivial in one dimension, where every metric is flat and $\lambda$ vanishes identically). So, given the eigenvalue equation,
\begin{equation}
\Lap^L Cu = \alpha Cu,\qquad Cu\neq 0,
\end{equation}
by adding a constant, we can assume without loss of generality $\int_M u=0$. Now
\begin{equation}
C(\Lap u-\alpha u)=\Lap^LCu-\alpha Cu=0,
\end{equation}
so $\Lap u -\alpha u$ is constant and by integration this constant is seen to be zero. Thus $\alpha\leq 0$. If $\alpha$ were zero, then $u$ would be constant and $Cu=0$, a contradiction. Finally, $\Lap^L$ is nonpositive on $TT$ by the hypothesis of the theorem (more precisely, by the weaker hypothesis $\Lap^L\leq 0$ on TT). The kernel
\begin{equation}
K_{\grf}=\{h\in TT_{\grf} ;\, \Lap^L_{\grf} h = 0\}
\end{equation}
is finite dimensional and $\Lap^L$ is strictly negative on $TT_{\grf}\ominus K_{\grf}$.\\
By Lemma \ref{integrability1} and Proposition \ref{integrability2}, $T_{\grf}\mathcal{P}_{\grf}=\mathbb{R}\grf\oplus K_{\grf}$. Now claim \ref{stepi}.\! follows from (\ref{2ndvaroflambda}). More precisely, there exists a constant $c>0$, such that
\begin{equation}\label{eigenestimate}
\langle h, \Lap^L_{\grf} h\rangle_{L^2_{\grf}} \leq - c\langle h, h\rangle_{L^2_{\grf}} \qquad \text{ for all } h\in N_{\grf}.
\end{equation}
\end{proof}

\begin{proof}[Proof of \ref{stepii}]
For small $\eps>0$, by continuity,
\begin{align}
\langle h, \Lap^L_{\bar{g}} h\rangle_{L^2_{\bar{g}}}&=-\eps \langle D h, D h\rangle_{L^2_{\bar{g}}}+(1-\eps)\langle h, \Lap_{\bar{g}} h+\tfrac{2}{1-\eps} \Rm_{\bar{g}}\!:\!h\rangle_{L^2_{\bar{g}}}\nonumber\\
& \leq  - c \norm{h}^2_{H^1} \qquad \text{ for all } \bar{g}\in\mathcal{P}_{\grf},\,h\in N_{\grf}
\end{align}
for some new constant $c>0$, possibly after passing to smaller neighborhoods. Now $\bar{g}\in\mathcal{P}_{\grf}$ is Ricci-flat, so $\lambda(\bar{g})=0$ and $D\lambda({\bar{g}})=0$. Thus
\begin{equation}\label{taylorlambda}
\lambda({\bar{g}+h})\leq -c \norm{h}^2_{H^1} + |R(\bar{g},h)|.
\end{equation}
Here we used the formula
\begin{align}
\lambda(\bar{g}+h)&=\lambda(\bar{g})+\tfrac{d}{dt}|_0\lambda(\bar{g}+th)+\tfrac{1}{2}\tfrac{d^2}{dt^2}|_0\lambda(\bar{g}+th)+R(\bar{g},h),\\
R(\bar{g},h)&=\int_0^1\left(\tfrac{1}{2}-t+\tfrac{1}{2}t^2\right)\tfrac{d^3}{dt^3}\lambda(\bar{g}+th)dt.
\end{align}
By Proposition \ref{3rdvaroflambda} we have the uniform estimate
\begin{equation}
|R(\bar{g},h)|\leq C  \norm{h}_{C^{2,\alpha}}  \norm{h}^2_{H^1}
\end{equation}
for the remainder, if $\bar{g}-\grf$ and $h$ are $C^{2,\alpha}$-small. For sufficiently small $C^{2,\alpha}$-norm, the negative term in (\ref{taylorlambda}) dominates.
Finally, the `exponential map'
\begin{equation}
E:\mathcal{P}_{\grf}\times N_{\grf}\to \grf+\ker\div_{\grf},\qquad E(\bar{g},h)=\bar{g}+h
\end{equation}
maps a $C^{2,\alpha}$-neighborhood of $(\grf,0)$ onto a $C^{2,\alpha}$-neighborhood of $\grf$. Here, to apply the inverse function theorem, we temporarily enlarge the involved spaces to $C^{2,\alpha}$-spaces. Since the kernel of $\Lap^L_{\grf}$ is smooth by elliptic regularity, the proof of Proposition \ref{integrability2} shows that $\mathcal{P}_{\grf}$ only consists of smooth elements also after passing to $C^{2,\alpha}$-spaces. Thus $E(\bar{g},h)$ is smooth if and only if $h$ is smooth. This finishes the proof of Claim \ref{stepii}.
\end{proof}

\begin{proof}[Proof of \ref{stepiii}]
By the Ebin-Palais slice theorem, every $g\in\mathcal{U}_{\grf}$ can be written as $g=\varphi^\ast \hat{g}$ for some $\varphi\in\mathcal{D}(M), \hat{g}\in\mathcal{S}_{\grf}$. Since $\lambda$ is diffeomorphism invariant
\begin{equation}
\lambda(g)=\lambda(\hat{g})\leq 0
\end{equation}
by step \ref{stepii}. If $\lambda(g)=0$, then $\hat{g}\in\mathcal{P}_{\grf}$, so $\Rc_{\hat{g}}=0$ and thus $\Rc_g=0$.
\end{proof}

This finishes the proof of Theorem A.
\end{proof}

\begin{proof}[Proof of Corollary D]
Let $\mathcal{U}_{\grf}$ be as in Theorem A and $g\in\mathcal{U}_{\grf}$. If $R_g\geq 0$, then $\lambda(g)\geq 0$. Thus $\lambda(g)=0$ and $\Rc_g=0$ by the equality case of Theorem A.
\end{proof}

We will now estimate the motion in the gauge directions. Namely, we have to deal with the minimizer $f_g$ from (\ref{lambda}) appearing in $e^{-f_g}dV_g$ and more importantly in $\Rc_g+\Hess_g f_g$ in (\ref{1stvaroflambda}). We start with the following refinement of Lemma \ref{boundonw}.

\begin{lemma}\label{ctwoalphalemma}
Let $(M,\grf)$ be compact, Ricci-flat and $\eps>0$. Then there exists a $C^{2,\alpha}$-neighborhood $\mathcal{U}_{\grf}$ of $\grf$ such that
\begin{equation}
\norm{f_g-\log\mathrm{Vol}_{\grf}(M)} _{C^{2,\alpha}}<\eps
\end{equation}
for all $g\in\mathcal{U}_{\grf}$, where $f_g$ is the minimizer in (\ref{lambda}).
\end{lemma}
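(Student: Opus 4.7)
The plan is to reduce the lemma to a $C^{2,\alpha}$-convergence statement for the ground state $w_g := e^{-f_g/2}$ of $H_g = -4\Lap_g + R_g$, and then translate via $f_g = -2\log w_g$. Since $\grf$ is Ricci-flat, $R_{\grf} = 0$ and $\lambda(\grf) = 0$, so the ground-state equation $H_{\grf} w_{\grf} = \lambda(\grf)w_{\grf}$ reduces to $\Lap_{\grf} w_{\grf} = 0$. Combined with positivity and the normalization $\int_M w_{\grf}^2 \, dV_{\grf} = 1$, this forces $w_{\grf} \equiv c_0 := \mathrm{Vol}_{\grf}(M)^{-1/2}$, and correspondingly $f_{\grf} = -2\log c_0 = \log \mathrm{Vol}_{\grf}(M)$; thus the lemma is equivalent to showing $f_g \to f_{\grf}$ (equivalently $w_g \to c_0$) in $C^{2,\alpha}$ as $g \to \grf$ in $C^{2,\alpha}$.

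First I would establish $C^0$-convergence by a compactness/uniqueness argument. For any sequence $g_i \to \grf$ in $C^{2,\alpha}$, Lemma \ref{boundonw} supplies a uniform $C^{2,\alpha}$-bound on $w_{g_i}$, so by Arzel\`a-Ascoli a subsequence converges in $C^2$ to a nonnegative limit $w_\infty$ with $\int_M w_\infty^2 \, dV_{\grf} = 1$. Passing to the limit in $H_{g_i} w_{g_i} = \lambda(g_i) w_{g_i}$, using $R_{g_i} \to 0$ in $C^{0,\alpha}$ and $\lambda(g_i) \to 0$ by the continuity of eigenvalues recalled in Section~\ref{variationalstructure}, yields $\Lap_{\grf} w_\infty = 0$; hence $w_\infty \equiv c_0$. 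Since the limit is independent of the subsequence, the whole family satisfies $w_g \to c_0$ in $C^0$.

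To upgrade from $C^0$ to $C^{2,\alpha}$, I would apply Schauder estimates to the difference $v_g := w_g - c_0$. Because $c_0$ is constant, $\Lap_g c_0 = 0$, so
\begin{equation*}
-4\Lap_g v_g + (R_g - \lambda(g))\, v_g = -c_0\bigl(R_g - \lambda(g)\bigr).
\end{equation*}
The right-hand side tends to zero in $C^{0,\alpha}$ (since $R_g \to 0$ in $C^{0,\alpha}$ as the metric varies continuously in $C^{2,\alpha}$ and $\lambda(g) \to 0$), while the coefficients of the operator are uniformly controlled in $C^{0,\alpha}$ on a $C^{2,\alpha}$-neighborhood of $\grf$. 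Global Schauder estimates on the closed manifold $M$ (obtained by a coordinate patching from \cite[Thm.\ 6.2]{GT}) then give
\begin{equation*}
\|v_g\|_{C^{2,\alpha}} \leq C\bigl(\|\text{RHS}\|_{C^{0,\alpha}} + \|v_g\|_{C^0}\bigr) \longrightarrow 0.
\end{equation*}

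Finally, since $w_{\grf} = c_0 > 0$ and $w_g \to c_0$ uniformly, on a sufficiently small neighborhood we have $w_g \geq c_0/2$, so $f_g = -2\log w_g$ is a $C^{2,\alpha}$-smooth function of $w_g$ with uniformly bounded derivatives of the logarithm; hence $\|f_g - f_{\grf}\|_{C^{2,\alpha}} \to 0$, and the lemma follows by shrinking the neighborhood. The only genuine subtlety is the upgrade from $C^0$ to $C^{2,\alpha}$ convergence, and the key reason the Schauder argument works so cleanly is that $w_{\grf}$ is constant, so the inhomogeneity is entirely driven by $R_g - \lambda(g)$ with no contribution from derivatives of $w_{\grf}$.
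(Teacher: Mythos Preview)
Your argument is correct, but it follows a different route from the paper's. The paper proves the lemma via the implicit function theorem: writing $w_g = 1 + u$ (after normalizing $\mathrm{Vol}_{\grf}(M)=1$), it defines $F(g,u)$ as the mean-zero part of $(-4\Lap_g + R_g - \lambda(g))(1+u)$, observes $F(\grf,0)=0$, checks (using the analyticity established in Section~\ref{variationalstructure}) that $F$ is $C^1$, and notes that $D_uF(\grf,0) = -4\Lap_{\grf}$ is an isomorphism between the mean-zero subspaces of $C^{2,\alpha}$ and $C^{0,\alpha}$. The implicit function theorem then yields $u(g)$ with $\norm{u(g)}_{C^{2,\alpha}}$ small, and one recovers $w_g$ after renormalizing. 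Your approach instead combines the uniform $C^{2,\alpha}$-bound of Lemma~\ref{boundonw}, an Arzel\`a--Ascoli/uniqueness-of-limit argument to get $C^0$-convergence, and a Schauder estimate on the difference $v_g = w_g - c_0$ to upgrade to $C^{2,\alpha}$.

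Each approach has its merits. The implicit function theorem gives slightly more---continuous (indeed $C^1$) dependence of $w_g$ on $g$---at the price of invoking the regularity of $\lambda(g)$ from Section~\ref{variationalstructure} to verify that $F$ is $C^1$. Your compactness-plus-Schauder argument is more self-contained: it reuses Lemma~\ref{boundonw} directly and needs only the continuity of $\lambda$, not its differentiability. The key observation that makes your Schauder step clean---that $w_{\grf}$ is constant, so the inhomogeneity in the equation for $v_g$ is exactly $-c_0(R_g - \lambda(g))$---is well spotted.
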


\begin{proof}
Assume the volume is normalized, then
\begin{equation}
f_{\grf}=\log\mathrm{Vol}_{\grf}(M)=0.
\end{equation}
Write $w_g=e^{-f_g/2}$. There is some $\tilde{\eps}>0$, such that
\begin{equation}\label{eqntoshow}
\norm{w_g-1}_{C^{2,\alpha}}<\tilde{\eps}\Rightarrow \norm{f_g}_{C^{2,\alpha}}<\eps
\end{equation}
We will prove $\norm{w_g-1}_{C^{2,\alpha}}<\tilde{\eps}$ for $g$ near $\grf$ using the implicit function theorem. Let
\begin{align}
X&=\{g\in C^{2,\alpha}(S^2T^\ast M);\; g \text{ positive definite}\},\\
Y&=\{u\in C^{2,\alpha}(M);\;  \int_M u\, dV_{\grf}=0\},\\
Z&=\{l\in C^{0,\alpha}(M);\; \int_M l\, dV_{\grf}=0\}.
\end{align}
Define $F:X\times Y\to Z$ by
\begin{multline}
F(g,u)=(-4\Lap_g+R_g-\lambda(g))(1+u)\\
-\int_M (-4\Lap_g+R_g-\lambda(g))(1+u)\, dV_{\grf}.
\end{multline}
From Section \ref{variationalstructure}, we know that $F$ is $C^1$.
Observe that $F(\grf,0)=0$ and
\begin{equation}\label{solveforeigenvalue}
F(g,u)=0\Leftrightarrow (-4\Lap_g+R_g)(1+u)=\lambda(g)(1+u).
\end{equation}
Indeed, $F(g,u)=0$ implies $(-4\Lap_g+R_g-\lambda(g))(1+u)=c$ and by the Fredholm alternative $\int_Mcw_gdV_g=0$. Thus $c=0$, since $w_g$ is positive. Now
\begin{equation}
DF(\grf,0)|_Y=-4\Lap_{\grf}:Y\to Z
\end{equation}
is indeed an isomorphism. By the implicit function theorem there exists a $C^{2,\alpha}$-neighborhood of $\grf$ such that (\ref{solveforeigenvalue}) can be solved for $u=u(g)$ with the estimate $\norm{u(g)}_{C^{2,\alpha}}<\tilde{\eps}/100$. Since
\begin{equation}
w_g=\left(\int_M(1+u(g))^2dV_g\right)^{-\tfrac{1}{2}}(1+u(g))
\end{equation}
we obtain $\norm{w_g-1}_{C^{2,\alpha}}<\tilde{\eps}$ in a small enough $C^{2,\alpha}$-neighborhood.
\end{proof}

Let $g\in \grf+\ker\div_{\grf}, g=\bar{g}+h, \bar{g}\in\mathcal{P}_{ \grf}, h\in N_{ \grf}$ as in the proof of Theorem A. In the following four lemmas, we will show
\begin{equation}
\Rc_g+\Hess_g f_g=-\tfrac{1}{2}\Lap^L_{\grf}h+O_1(h\ast h)+O_2((\bar{g}-\grf)\ast h)
\end{equation}
in a $C^{2,\alpha}$-neighborhood of $\grf$ with estimates for $O_1$ and $O_2$.

\begin{lemma}
Let $(M,\grf)$ be compact Ricci-flat and $h\in\ker\div_{\grf}$. Then
\begin{align}
&\tfrac{d}{dt}|_0 f_{\grf+th}=\tfrac{1}{2}\tr_{\grf}h\label{dtf}\\
&\tfrac{d}{dt}|_0\left( \Rc_{\grf+th}+\Hess_{\grf+th} f_{\grf+th}\right)=-\tfrac{1}{2}\Lap^L_{\grf}h.\label{dts}
\end{align}
\end{lemma}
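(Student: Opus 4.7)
My plan is to derive both formulas by differentiating the Euler-Lagrange equation characterizing $f_g$ together with the normalization $\int_M e^{-f_g}\,dV_g=1$, and by combining the result with the standard first-variation formulas for scalar and Ricci curvature. The central simplification is that at a Ricci-flat metric the ground state $w_{\grf}$ of $H_{\grf}=-4\Lap_{\grf}$ is the positive constant $\mathrm{Vol}_{\grf}(M)^{-1/2}$, so $f_{\grf}=\log \mathrm{Vol}_{\grf}(M)$ is constant and $Df_{\grf}=0$, $\Hess_{\grf}f_{\grf}=0$. Consequently all product-rule terms in which $\tfrac{d}{dt}|_0\Lap_{g(t)}$ or $\tfrac{d}{dt}|_0\Hess_{g(t)}$ acts on $f_{\grf}$, and also the variation of $|Df|^2$, vanish automatically.

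For (\ref{dtf}) I start from Perelman's equation $R_g+2\Lap_g f_g-|Df_g|_g^2=\lambda(g)$, equivalent to $w_g=e^{-f_g/2}$ being the ground state of $H_g$ as derived in Section \ref{variationalstructure}. Differentiating at $g(t)=\grf+th$ and using the standard identity $\dot R=-\Lap\tr h+\div\div h-\langle h,\Rc\rangle$ together with $\Rc_{\grf}=0$, $\div_{\grf} h=0$, and $\dot\lambda=D\lambda(\grf)[h]=0$, everything collapses to $2\Lap_{\grf}\dot f=\Lap_{\grf}\tr_{\grf}h$. Hence $\dot f-\tfrac{1}{2}\tr_{\grf}h$ is a constant on the compact connected manifold $M$. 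To pin down this constant, I differentiate $\int_M e^{-f_g}\,dV_g=1$; since $f_{\grf}$ is constant the factor $e^{-f_{\grf}}$ pulls out and one obtains $\int_M(\dot f-\tfrac{1}{2}\tr_{\grf}h)\,dV_{\grf}=0$, forcing the constant to vanish.

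For (\ref{dts}) I apply the standard formula $(D\Rc)_g(h)=-\tfrac{1}{2}\Lap_L h+\tfrac{1}{2}(\nabla(\div h)+(\nabla\div h)^T)-\tfrac{1}{2}\Hess\tr h$, which at $\grf$ with $h\in\ker\div_{\grf}$ reduces to $-\tfrac{1}{2}\Lap^L_{\grf}h-\tfrac{1}{2}\Hess_{\grf}\tr_{\grf}h$. For the Hessian contribution, the product rule yields $\tfrac{d}{dt}|_0\Hess_{g(t)}f_{g(t)}=\Hess_{\grf}\dot f$, because the variation of the Christoffels is contracted against $Df_{\grf}=0$. Inserting (\ref{dtf}) this equals $\tfrac{1}{2}\Hess_{\grf}\tr_{\grf}h$, and the two $\Hess\tr h$ pieces cancel, leaving the desired $-\tfrac{1}{2}\Lap^L_{\grf}h$.

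The only real hazard is sign conventions, especially in the formulas for $\dot R$ and $(D\Rc)(h)$; I would sanity-check them against the conformal test $g(t)=e^{2tu}\grf$ on a flat background, where both sides can be computed by hand. A second subtlety worth flagging is that $\dot f$ is only determined up to an additive constant by the Euler-Lagrange equation, which is why the normalization step is essential in (\ref{dtf}); it plays no role in (\ref{dts}) since constants are killed by $\Hess$. Apart from that, both identities are a routine application of the product rule and the Ricci-flat simplifications, with no deeper obstacle.
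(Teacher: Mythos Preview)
Your proof is correct and follows essentially the same approach as the paper: both differentiate the Euler--Lagrange equation for the minimizer together with the normalization $\int_M e^{-f_g}dV_g=1$, use that $f_{\grf}$ is constant to kill the product-rule terms, and then combine with the standard linearization of $\Rc$ on $\ker\div_{\grf}$. The only cosmetic difference is that the paper differentiates the eigenvalue equation $(-4\Lap_g+R_g-\lambda(g))e^{-f_g/2}=0$ directly, whereas you use the equivalent form $R_g+2\Lap_g f_g-|Df_g|_g^2=\lambda(g)$; both lead to $\Lap_{\grf}\bigl(\dot f-\tfrac12\tr_{\grf}h\bigr)=0$ and the same conclusion.
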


\begin{proof}
Since $f_{\grf}=\log\mathrm{Vol}_{\grf}(M)$ is a constant function, many terms will drop out in the following computation. From Section \ref{variationalstructure}, we know that $t\mapsto  f_{\grf+th}$ is analytic. Differentiating the equations
\begin{align}
&\left(-4\Lap_{\grf+th}+R_{\grf+th}-\lambda({\grf+th})\right)e^{-\tfrac{1}{2}f_{\grf+th}}=0,\\
&\int_M e^{-f_{\grf+th}} dV_{\grf+th}=1
\end{align}
at $t=0$, we obtain
\begin{align}
&\Lap_{\grf}\left(\tfrac{d}{dt}|_0 f_{\grf+th}-\tfrac{1}{2}\tr_{\grf}h\right)=0,\\
&\int_M\left(\tfrac{d}{dt}|_0 f_{\grf+th}-\tfrac{1}{2}\tr_{\grf}h\right)dV_{\grf}=0,
\end{align}
and Equation (\ref{dtf}) follows. Equation (\ref{dts}) follows from
\begin{equation}
\tfrac{d}{dt}|_0 \Rc_{\grf+th}=-\tfrac{1}{2}\left(\Lap^L_{\grf}h+\Hess_{\grf}\tr_{\grf}h\right)
\end{equation}
and $\tfrac{d}{dt}|_0\left( \Hess_{\grf+th} f_{\grf+th}\right)=\Hess_{\grf}\tfrac{d}{dt}|_0 f_{\grf+th}$.
\end{proof}

\begin{lemma}\label{taylor}
Let $F(s,t)$ be a $C^2$-function on $0\leq s,t\leq 1$ with values in a Frechet-space. Then
\begin{multline}
F(1,1)=F(1,0)+\tfrac{d}{dt}|_0F(0,t)+\int_0^1(1-t)\tfrac{d^2}{dt^2}F(0,t)dt\\
+\int_0^1\int_0^1\tfrac{\partial^2}{\partial s\partial t}F(s,t)dsdt.
\end{multline}
\end{lemma}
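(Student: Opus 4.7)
The plan is to reduce Lemma \ref{taylor} to two completely standard calculus identities, namely the one-variable Taylor formula with integral remainder and the two-variable Fubini/fundamental theorem of calculus identity for mixed partials. Both extend verbatim to functions taking values in a Frechet space, since all statements reduce to continuity of the relevant partial derivatives and to interchanging order of integration of continuous vector-valued functions.

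First, I would apply the one-variable Taylor expansion at $t=0$ (with integral remainder) to the curve $t\mapsto F(0,t)$ on $[0,1]$, which gives
\begin{equation}
F(0,1)=F(0,0)+\tfrac{d}{dt}|_0 F(0,t)+\int_0^1(1-t)\tfrac{d^2}{dt^2}F(0,t)\,dt. \notag
\end{equation}
This is just the standard Peano/integral form of Taylor's theorem applied to each continuous linear functional (or seminorm direction) and reassembled.

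Next, I would handle the mixed double integral by applying the fundamental theorem of calculus twice. Since $\tfrac{\partial^2}{\partial s\partial t}F$ is continuous by assumption, Fubini and the fundamental theorem yield
\begin{equation}
\int_0^1\!\!\int_0^1 \tfrac{\partial^2}{\partial s\partial t}F(s,t)\,ds\,dt=\int_0^1\bigl(\partial_t F(1,t)-\partial_t F(0,t)\bigr)\,dt=F(1,1)-F(0,1)-F(1,0)+F(0,0). \notag
\end{equation}

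The final step is to combine the two identities: substituting the Taylor expansion for $F(0,1)-F(0,0)$ into the rearranged second identity
\begin{equation}
F(1,1)=F(1,0)+\bigl(F(0,1)-F(0,0)\bigr)+\int_0^1\!\!\int_0^1 \tfrac{\partial^2}{\partial s\partial t}F(s,t)\,ds\,dt \notag
\end{equation}
yields exactly the claimed formula. There is no real obstacle here; the only subtlety is the routine verification that Taylor's theorem and Fubini hold for $C^2$ Frechet-valued functions, which follows by testing against continuous seminorms (or equivalently by working in each Banach space defining the Frechet topology).
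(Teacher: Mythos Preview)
Your proof is correct and essentially identical to the paper's: the paper reduces to the scalar case via Hahn--Banach, then verifies the same two identities you use (the integration-by-parts identity $\int_0^1(1-t)\tfrac{d^2}{dt^2}F(0,t)\,dt = -\tfrac{d}{dt}|_0F(0,t)+F(0,1)-F(0,0)$, which is exactly Taylor's formula with integral remainder, and the double fundamental-theorem identity for the mixed integral), and adds them. Your reduction via continuous seminorms is a cosmetic variant of the paper's Hahn--Banach reduction.
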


\begin{proof}
By the Hahn-Banach theorem, it suffices to prove the lemma for real valued F and this follows from
\begin{align}
\int_0^1(1-t)\tfrac{d^2}{dt^2}F(0,t)dt&=-\tfrac{d}{dt}|_0F(0,t)+\underbrace{\int_0^1\tfrac{d}{dt}F(0,t)dt}_{=F(0,1)-F(0,0)},\\
\int_0^1\int_0^1\tfrac{\partial^2}{\partial s\partial t}F(s,t)dsdt&=F(1,1)+F(0,0)-F(1,0)-F(0,1).
\end{align}
\end{proof}

\begin{lemma}\label{taylorsteady}
Let $g\in \grf+\ker\div_{\grf}, g=\bar{g}+h, \bar{g}\in\mathcal{P}_{ \grf}, h\in N_{ \grf}$ as in the proof of Theorem A. Then, in a $C^{2,\alpha}$-neighborhood of $\grf$ in $\grf+\ker\div_{\grf}$, we have the equality
\begin{equation}
\Rc_g+\Hess_g f_g=-\tfrac{1}{2}\Lap^L_{\grf}h+O_1+O_2
\end{equation}
with
\begin{align}
O_1&=\int_0^1(1-t)\tfrac{d^2}{dt^2}\left( \Rc_{\grf+th}+\Hess_{\grf+th} f_{\grf+th}\right)dt,\\
O_2&=\int_0^1\int_0^1\tfrac{\partial^2}{\partial s\partial t}\Big( \Rc_{\grf+s(\bar{g}-\grf)+th}\nonumber\\
&\qquad\qquad\qquad+\Hess_{\grf +s(\bar{g}-\grf) +th} f_{\grf +s(\bar{g}-\grf) +th}\Big)dsdt.
\end{align}
\end{lemma}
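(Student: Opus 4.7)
The plan is to apply the two-variable Taylor identity of Lemma~\ref{taylor} to the Fr\'echet-space-valued function
\begin{equation}
F(s,t) = \Rc_{\grf + s(\bar{g}-\grf) + th} + \Hess_{\grf + s(\bar{g}-\grf) + th}\, f_{\grf + s(\bar{g}-\grf) + th},
\end{equation}
defined on $[0,1]^2$, which evidently satisfies $F(1,1) = \Rc_g + \Hess_g f_g$. The desired identity then amounts to computing $F(1,0)$ and $\tfrac{d}{dt}|_0 F(0,t)$, since the two integral remainders coming out of Lemma~\ref{taylor} are exactly the $O_1$ and $O_2$ in the statement.

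First I would verify that $F$ is $C^2$ jointly in $(s,t)$ in a sufficiently small $C^{2,\alpha}$-neighborhood of $\grf$. The Ricci tensor depends smoothly (in fact analytically) on the metric, and the joint smoothness of $g\mapsto f_g$ follows by applying the implicit function theorem argument of Lemma~\ref{ctwoalphalemma} to the two-parameter family of metrics $\grf + s(\bar g - \grf) + th$; this produces a $C^2$ map into $C^{2,\alpha}(M)$ and hence, after applying $\Hess_{(\cdot)}$, into $C^{0,\alpha}(S^2 T^\ast M)$, which is ample regularity for Lemma~\ref{taylor}.

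Next I evaluate the boundary data. At $(s,t)=(1,0)$ the underlying metric is $\bar g \in \mathcal{P}_{\grf}$, which is Ricci-flat by definition of the premoduli space. Since $\bar g$ is Ricci-flat, the ground state of $H_{\bar g}=-4\Lap_{\bar g}+R_{\bar g}$ is a positive constant, so $f_{\bar g}=\log\mathrm{Vol}_{\bar g}(M)$ is constant and $\Hess_{\bar g} f_{\bar g}=0$; hence $F(1,0)=0$. For the first derivative at the origin, $F(0,t) = \Rc_{\grf+th}+\Hess_{\grf+th} f_{\grf+th}$, and since $h\in N_{\grf}\subset\ker\div_{\grf}$, equation~(\ref{dts}) of the preceding lemma applies directly and gives
\begin{equation}
\tfrac{d}{dt}\big|_0 F(0,t) = -\tfrac{1}{2}\Lap^L_{\grf} h.
\end{equation}

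Substituting these two values into the identity of Lemma~\ref{taylor} yields
\begin{equation}
\Rc_g+\Hess_g f_g = F(1,1) = 0 \;+\; \bigl(-\tfrac{1}{2}\Lap^L_{\grf} h\bigr) \;+\; O_1 \;+\; O_2,
\end{equation}
which is exactly the asserted expansion. The only real obstacle is the regularity check in the opening step: one must confirm that the implicit-function construction of $f_g$ in Lemma~\ref{ctwoalphalemma} can be differentiated twice in both parameters, but since the nonlinearity $F(g,u)$ from that lemma is analytic in $g$ as shown in Section~\ref{variationalstructure} and the linearization at $\grf$ is an isomorphism, this is a routine (if slightly tedious) application of the parameter-dependent version of the implicit function theorem.
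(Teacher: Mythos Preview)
Your proof is correct and follows exactly the same approach as the paper: apply Lemma~\ref{taylor} to the function $F(s,t)=\Rc_{\grf+s(\bar g-\grf)+th}+\Hess_{\grf+s(\bar g-\grf)+th}f_{\grf+s(\bar g-\grf)+th}$, observe $F(1,0)=0$ since $\bar g$ is Ricci-flat, and invoke (\ref{dts}) for the first-order term. Your added regularity discussion (the $C^2$ dependence of $F$ on $(s,t)$ via the implicit function theorem) is a useful elaboration that the paper leaves implicit.
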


\begin{proof}
Use Lemma \ref{taylor} with
\begin{equation}
F(s,t)=\Rc_{\grf+s(\bar{g}-\grf)+th}+\Hess_{\grf +s(\bar{g}-\grf) +th} f_{\grf +s(\bar{g}-\grf) +th}.
\end{equation}
Note that $F(1,0)=\Rc_{\bar{g}}+\Hess_{\bar{g}} f_{\bar{g}}=0$ and use (\ref{dts}).
\end{proof}

\begin{lemma}\label{estimateofo}
Let $g\in \grf+\ker\div_{\grf}, g=\bar{g}+h, \bar{g}\in\mathcal{P}_{ \grf}, h\in N_{ \grf}$ as in the proof of Theorem A. Then, there exists a $C^{2,\alpha}$-neighborhood of $\grf$ in $\grf+\ker\div_{\grf}$ and a constant $C<\infty$ such that the inequalities
\begin{align}
\norm{O_1}_{L^2}&\leq C \norm{h}_{C^{2,\alpha}} \norm{h}_{H^2},\label{technicalestimate1}\\
\norm{O_2}_{L^2}&\leq C \norm{\bar{g}-\grf}_{C^{2,\alpha}} \norm{h}_{H^2}\label{technicalestimate2}
\end{align}
hold in this neighborhood.
\end{lemma}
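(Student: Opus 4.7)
The plan is to estimate the integrands in the expressions for $O_1$ and $O_2$ pointwise in the parameters $(s,t) \in [0,1]^2$ (resp.\ $t \in [0,1]$) and then integrate. Set $F(s,t) = \Rc_{g(s,t)} + \Hess_{g(s,t)} f(s,t)$ with $g(s,t) = \grf + s(\bar{g} - \grf) + th$ and $f(s,t) = f_{g(s,t)}$. Expanding $\tfrac{d^2}{dt^2} F(0,t)$ schematically, one obtains a finite sum of bilinear products $A \cdot B$ with $A, B$ drawn from the set
\begin{equation*}
\{h, Dh, D^2 h, \partial_t f, D \partial_t f, D^2 \partial_t f, \partial_t^2 f, D \partial_t^2 f, D^2 \partial_t^2 f\},
\end{equation*}
multiplied by coefficients depending smoothly on $g(0,t)$ and $f(0,t)$ (both uniformly $C^{2,\alpha}$-close to $\grf$, resp.\ to the constant $\log \mathrm{Vol}_{\grf}(M)$, by Lemma \ref{ctwoalphalemma}). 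Similarly, $\tfrac{\partial^2}{\partial s \partial t} F(s,t)$ splits into products with one factor involving $\bar{g} - \grf$ and its derivatives and one factor involving $h$ and its derivatives, together with contributions coming from $\partial_s \partial_t f$.

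The key preliminary step is to control the parameter derivatives of $f(s,t)$ in appropriate norms. Differentiating the eigenvalue equation
\begin{equation*}
(-4 \Lap_{g(s,t)} + R_{g(s,t)} - \lambda(g(s,t))) e^{-f(s,t)/2} = 0
\end{equation*}
together with the normalization $\int e^{-f(s,t)} dV_{g(s,t)} = 1$ in $s$ and $t$ yields elliptic PDEs of the form $-4\Lap_{g(s,t)} \partial_t f = R_1$, $-4\Lap_{g(s,t)} \partial_t^2 f = R_2$, $-4\Lap_{g(s,t)} \partial_s \partial_t f = R_3$, whose right-hand sides are respectively linear in $h$, bilinear in $h$, and bilinear in $(\bar{g} - \grf, h)$ (the contribution of $\partial_t \lambda$ and $\partial_s \lambda$ being absorbed by projecting onto the orthogonal complement of the constants, as in Lemma \ref{ctwoalphalemma}). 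Schauder theory with the uniform $C^{2,\alpha}$ coefficient bounds of Lemma \ref{boundonw} and uniform invertibility of $-4\Lap_{g(s,t)}$ on this complement yields, on a sufficiently small $C^{2,\alpha}$-neighborhood of $\grf$,
\begin{equation*}
\norm{\partial_t f}_{C^{2,\alpha}} \leq C \norm{h}_{C^{2,\alpha}}, \quad \norm{\partial_t^2 f}_{H^2} \leq C \norm{h}_{C^{2,\alpha}} \norm{h}_{H^2}, \quad \norm{\partial_s \partial_t f}_{H^2} \leq C \norm{\bar{g} - \grf}_{C^{2,\alpha}} \norm{h}_{H^2}.
\end{equation*}

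Given these bounds, each bilinear term $A \cdot B$ in $\tfrac{d^2}{dt^2} F(0,t)$ is estimated by $\norm{A \cdot B}_{L^2} \leq \norm{A}_{L^\infty} \norm{B}_{L^2}$, where $A$ is the factor with at most first-order derivatives (estimated in $L^\infty$ via the $C^{2,\alpha}$-bound on $h$ or $\partial_t f$) and $B$ is the factor with up to two derivatives (estimated in $L^2$ via the $H^2$-bound on $h$ or $\partial_t^2 f$). Summing over the finitely many terms gives $\norm{\tfrac{d^2}{dt^2} F(0,t)}_{L^2} \leq C \norm{h}_{C^{2,\alpha}} \norm{h}_{H^2}$ uniformly in $t$, and integrating yields \eqref{technicalestimate1}. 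The estimate \eqref{technicalestimate2} follows by the same mechanism, pairing a $C^{2,\alpha}$-factor from $\bar{g} - \grf$ with an $H^2$-factor from $h$.

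The main obstacle I expect is the derivation and uniform control of the second parameter derivatives $\partial_t^2 f$ and $\partial_s \partial_t f$ of the implicitly defined minimizer $f_{g(s,t)}$: one must differentiate the nonlinear eigenvalue equation twice, track how $\partial_t \lambda$ and $\partial_s \lambda$ contribute through the normalization, and apply Schauder estimates with constants that are uniform in $(s,t)$ over the whole neighborhood. Once this setup is in place, the remaining bookkeeping of schematic product terms and Hölder-Sobolev pairings is routine.
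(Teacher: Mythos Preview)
Your approach is correct and essentially the same as the paper's: differentiate the minimizer equation to obtain elliptic equations for $\partial_t f$, $\partial_s f$, $\partial_s\partial_t f$, bound these in $C^{2,\alpha}$ or $H^2$ as appropriate, and then estimate each schematic product term in $\partial^2 F$ by an $L^\infty \times L^2$ pairing. The paper works with the equation $2\Lap_g f - |Df|^2_g + R_g = \lambda(g)$ (so the relevant operator is $P_g = \Lap_g - Df_g\cdot D$, whose kernel is the constants by the maximum principle) rather than the Schr\"odinger form, but this is only a cosmetic difference.

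One correction: the $\lambda$-derivative terms do \emph{not} disappear by projecting onto the complement of constants; they sit in the right-hand sides and must be bounded. For the first derivatives this is immediate ($|\partial_t\lambda|\leq C\norm{h}_{H^2}$ from the first variation), but the equations for $\partial_t^2 f$ and $\partial_s\partial_t f$ contain $\partial_t^2\lambda$ and $\partial_s\partial_t\lambda$, and the required bilinear bound $|\partial_s\partial_t\lambda|\leq C\norm{\bar g-\grf}_{C^{2,\alpha}}\norm{h}_{H^2}$ is obtained from the eigenvalue perturbation formula~(\ref{pertseries2}) and the estimates of Proposition~\ref{3rdvaroflambda}. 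Once you add this input, your sketch goes through exactly as in the paper.
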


\begin{proof}
The estimate is clear for the part of $O_i$ coming from $\Rc$ (since it contains at most second derivatives). The part coming from $\Hess f$ is more tricky. Let $h,k$ be symmetric 2-tensors. We will show
\begin{equation}
\norm{\tfrac{\partial^2}{\partial s\partial t}|_{(0,0)} \Hess_{g+sk+th} f_{g+sk+th}}_{L^2} \leq C \norm{k}_{C^{2,\alpha}} \norm{h}_{H^2}
\end{equation}
uniformly for all $g$ in a $C^{2,\alpha}$-neighborhood. We differentiate:
\begin{equation}
\tfrac{\partial^2}{\partial s\partial t}|_{(0,0)} \Hess_{g+sk+th} f_{g+sk+th}=\dot{\Hess}'f+\Hess'\dot{f}+\dot{\Hess}f'+\Hess\dot{f}'.
\end{equation}
The first term has the schematic form
\begin{equation}
\dot{\Hess}'f=kDhDf+hDkDf,
\end{equation}
thus
\begin{equation}
\norm{\dot{\Hess}'f}_{L^2}\leq C \norm{k}_{C^1} \norm{h}_{H^1}\leq C  \norm{k}_{C^{2,\alpha}} \norm{h}_{H^2}
\end{equation}
by Lemma \ref{ctwoalphalemma} (we will use Lemma \ref{ctwoalphalemma} frequently below without mentioning it again). To control the other three terms, we will differentiate the equation
\begin{equation}
2\Lap_{g+sk+t h}f_{g+sk+t h}-|Df_{g+sk+t h}|_{g+sk+t h}^2+R_{g+sk+t h}=\lambda(g+sk+t h)\label{technical1}
\end{equation}
and use elliptic estimates. Differentiating with respect to $t$ gives the linear elliptic equation
\begin{equation}
P_g\dot{f}=F_g[h],
\end{equation}
where $P_g=\Lap_g-g^{ij}D_if_gD_j$ and $F$ has the schematic form
\begin{equation}
F_g[h]=\dot{\lambda}+D^2h+DhDf+hD^2f+hDfDf+h\Rc.
\end{equation}
By the maximum principle, only constant functions are in the kernel of $P$. Thus $\dot{f}-\bar{\dot{f}}$ is $L^2$-orthogonal to $\ker P$ (the bar denotes the average). Since it also solves the equation
\begin{equation}
P_g(\dot{f}-\bar{\dot{f}})=F_g[h],
\end{equation}
we get the estimate
\begin{equation}\label{technical2}
\norm{\dot{f}-\bar{\dot{f}}}_{H^2}\leq C \norm{F_g[h]}_{L^2}\leq C \norm{h}_{H^2}.
\end{equation}
In the last step, we used the estimate (cf. Section \ref{variationalstructure}),
\begin{equation}
|\dot{\lambda}|\leq C \norm{h}_{H^2}.
\end{equation}
Thus
\begin{equation}
\norm{\Hess'\dot{f}}_{L^2}\leq C  \norm{DkD\dot{f}}_{L^2}\leq C  \norm{k}_{C^1} \norm{\dot{f}-\bar{\dot{f}}}_{H^1}
\leq C \norm{k}_{C^{2,\alpha}} \norm{h}_{H^2}.
\end{equation}
Next, we will estimate $\dot{\Hess}f'$. Similar as above, we obtain:
\begin{align}
P_g(f'-\bar{f'})&=F_g[k]\label{technical3}\\
\norm{f'-\bar{f'}}_{H^2}&\leq C \norm{k}_{H^2}.\label{technical4}
\end{align}
From (\ref{technical3}), by DeGiorgi-Nash-Moser and Schauder estimates we get
\begin{equation}
\norm{f'-\bar{f'}}_{C^{2,\alpha}}\leq C \left( \norm{F_g[k]}_{C^{0,\alpha}}+\norm{f'-\bar{f'}}_{L^2}\right)
\leq C \norm{k}_{C^{2,\alpha}},\label{technical5}
\end{equation}
where we used (\ref{technical4}) and $|\lambda'|\leq C\norm{k}_{C^{2,\alpha}}$. Thus
\begin{equation}
\norm{\dot{\Hess}f'}_{L^2}\leq C  \norm{DhDf'}_{L^2}\leq C \norm{k}_{C^{2,\alpha}} \norm{h}_{H^2}.
\end{equation}
Finally, let us estimate $\Hess\dot{f}'$. Differentiating (\ref{technical1}) twice gives the linear elliptic equation
\begin{equation}
P_g\dot{f}'=G_g[h,k],
\end{equation}
where $G$ has the schematic form,
\begin{align}
G_g[h,k]&=\dot{\lambda}'+D\dot{f}Df'+\left(hD^2f'+DhDf'+hDf'Df\right)\nonumber\\
&\quad+\left(kD^2\dot{f}+DkD\dot{f}+kD\dot{f}Df\right)\nonumber\\
&\quad+\Big(kD^2h+hD^2k+DhDk+hDkDf+kDhDf\nonumber\\
&\qquad\qquad\qquad\qquad+hkDfDf+hkD^2f+hk\Rm\Big).\label{technical6}
\end{align}
Similar as before, we get the estimate
\begin{equation}
\norm{\Hess\dot{f}'}_{H^2}\leq C\norm{\dot{f}'-\bar{\dot{f}}'}_{H^2}\leq C \norm{G_g[h,k]}_{L^2}\leq C \norm{k}_{C^{2,\alpha}} \norm{h}_{H^2},
\end{equation}
where the last inequality is obtained as follows: The expression (\ref{technical6}) for $G$ consists of five terms. The inequality is clear for the fifth term, for the fourth term it follows from (\ref{technical2}), for the third term from (\ref{technical5}) and for the second term from (\ref{technical2}) and (\ref{technical5}). Finally, from Section \ref{variationalstructure}, we know
\begin{equation}
|\dot{\lambda}'|\leq C \norm{k}_{C^{2,\alpha}} \norm{h}_{H^2},
\end{equation}
and this yields the inequality for the first term. Indeed, from (\ref{pertseries2}) by polarization
\begin{align}
\tfrac{\partial^2}{\partial s\partial t}|_{(0,0)}\lambda(g+sk+t h)=&\langle w, \dot{H}'[h,k]w\rangle\\
&+\tfrac{1}{2\pi i}\oint\langle w,\dot{H}[h](\lambda-H)^{-1}H'[k] w\rangle\tfrac{d\lambda}{\lambda-\lambda(g)}\nonumber\\
&+\tfrac{1}{2\pi i}\oint\langle w,H'[k](\lambda-H)^{-1}\dot{H}[h] w\rangle\tfrac{d\lambda}{\lambda-\lambda(g)}\nonumber
\end{align}
and this can be estimated using the same methods as in the proof of Proposition \ref{3rdvaroflambda}. All the above estimates are uniform in a $C^{2,\alpha}$-neighborhood. This finishes the proof of the lemma.
\end{proof}

\begin{proof}[Proof of Theorem C]
We can assume  $g\in \grf+\ker\div_{\grf}, g=\bar{g}+h, \bar{g}\in\mathcal{P}_{ \grf}, h\in N_{ \grf}$. This reduction is justified using the Ebin-Palais slice theorem and integrability as in the proof of Theorem A. In particular, note that $\varphi^\ast f_g=f_{\varphi^\ast g}$ and that the different $L^2$-norms are uniformly equivalent.\\
Since
\begin{equation}
\norm{\Rc_g}_{L^2}\leq C \norm{h}_{H^2},
\end{equation}
it suffices to show
\begin{equation}\label{toshow}
\norm{\Rc_g+\Hess_gf_g}_{L^2}^2\geq c  \norm{h}_{H^2}^2
\end{equation}
for some $c>0$. To see this, using Lemma \ref{taylorsteady}, note that
\begin{align}
\norm{\Rc_g+\Hess_gf_g}_{L^2}^2&=\tfrac{1}{4}\norm{\Lap^L_{\grf}h}_{L^2}^2-\langle O_1+O_2, \Lap^L_{\grf}h\rangle+\norm{O_1+O_2}_{L^2}^2\nonumber\\
&\geq 2c\norm{h}_{H^2}^2-C(\norm{O_1}_{L^2}+\norm{O_2}_{L^2}) \norm{h}_{H^2}
\end{align}
for some $c>0$, since $\Lap^L_{\grf}|_{N_{ \grf}}$ is injective. Together with Lemma \ref{estimateofo}, this proves (\ref{toshow}) in a $C^{2,\alpha}$-neighborhood and the theorem follows.
\end{proof}

\begin{remark}\label{reverseremark}
The reverse inequality,
\begin{equation}
 \norm{\Rc_g+\Hess_gf_g}_{L^2(M,e^{-f_g}dV_g)}\leq \norm{\Rc_g}_{L^2(M,e^{-f_g}dV_g)}
\end{equation}
follows immediatly from the $L^2(M,e^{-f_g}dV_g)$-orthogonality of $\Rc+\Hess f$ and $\Hess f$ (see (\ref{dlambdadiffinv})).
\end{remark}

\begin{proof}[Proof of Theorem B]
We can assume  $g\in \grf+\ker\div_{\grf}, g=\bar{g}+h, \bar{g}\in\mathcal{P}_{ \grf}, h\in N_{ \grf}$, arguing as in the proof of Theorem C. Then, always working in a small enough $C^{2,\alpha}$-neighborhood,
\begin{equation}
|\lambda(g)|\leq C \norm{h}_{H^2}^2.
\end{equation}
This estimate follows from $\lambda(\bar{g})=0$, $D\lambda(\bar{g})=0$ and (\ref{pertseries2}). Together with (\ref{toshow}), the theorem follows.
\end{proof}

\begin{remark} To show convergence of a parabolic gradient flow, $\tfrac{d}{dt}g=\nabla\lambda(g)$, starting near a local maximizer  $g_{\mathrm{max}}$ of its energy $\lambda$, an inequality of the form $\norm{\nabla\lambda(g)}_{L^2}\geq c |\lambda(g)-\lambda(g_{\mathrm{max}})|^{1-\theta}$ for some $\theta\in(0,\tfrac{1}{2}]$ is sufficient \cite{Si}. Let us also remark that from Perelman's evolution inequality $\tfrac{d\lambda}{dt}\geq \tfrac{2}{n}\lambda^2$ we only get the inequality for $\theta=0$.
\end{remark}

\section{Stability and Instability under Ricci flow}\label{stabandinstab}

Let $(M^n,\grf)$ be compact Ricci-flat. Assume all infinitesimal Ricci-flat deformations of $\grf$ are integrable and $\Lap^L_{\grf}\leq 0$ on TT. Let $k\geq 3$.\\
By the Theorems A, B, C and Lemma \ref{ctwoalphalemma}, there exist constants $\eps_0>0$ and $C_1,C_2<\infty$ such that for all $g$ with $\norm{g-\grf}_{C^k_{\grf}}<\eps_0$:
\begin{align}
\lambda(g)\leq 0\quad \text{and}\quad \lambda(g)=0 \Leftrightarrow \Rc_g=0\label{stabA}\\
|\lambda(g)|^{1/2}\leq C_1 \norm{\Rc_g+\Hess_gf_g}_{L^2_{f_g}}\label{stabB}\\
\norm{\Rc_g}_{L^2_g}\leq C_2 \norm{\Rc_g+\Hess_gf_g}_{L^2_{f_g}}\label{stabC}
\end{align}
Here, we define the $C^k$-norm using $\grf$ and the $L^2_f$-norm using the metric $g$ and the measure $e^{-f_g}dV_g$.

\begin{lemma}[Energy controls the distance]\label{energylemma}
Let $(M^n,\grf)$ and $k,\eps_0,C_1,C_2$ as above. Let $0\leq t_1< t_2<T$ and $g(t)$ a Ricci flow (\ref{ricciflow}) with $\norm{g(t)-\grf}_{C^k_{\grf}}<\eps_0$ for all $t\in[0,T)$. Then
\begin{equation}
\int_{t_1}^{t_2}\norm{\Rc_{g(t)}}_{L^2_{g(t)}}dt \leq C_1C_2 \left(|\lambda(g(t_1))|^{\tfrac{1}{2}}-|\lambda(g(t_2))|^{\tfrac{1}{2}}\right).
\end{equation}
\end{lemma}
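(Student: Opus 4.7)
The plan is to combine Perelman's monotonicity formula with the two inequalities (B) and (C) through a standard Łojasiewicz-Simon integration trick. Set $N(t)=\|\Rc_{g(t)}+\Hess_{g(t)}f_{g(t)}\|_{L^2_{f_{g(t)}}}$. Perelman's first variation formula (\ref{perfirstvar}) together with the fact that $f_{g(t)}$ is the minimizer realizing $\lambda(g(t))$ gives the monotonicity identity $\tfrac{d}{dt}\lambda(g(t))=2N(t)^2$ along the Ricci flow. Since the flow stays in the neighborhood where (\ref{stabA}) holds, $\lambda(g(t))\leq 0$, so $|\lambda(g(t))|=-\lambda(g(t))$ is nonincreasing.

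Next I would treat the degenerate case separately: if $|\lambda(g(t_\ast))|=0$ for some $t_\ast\in[t_1,t_2]$, then by (\ref{stabA}) we have $\Rc_{g(t_\ast)}=0$, hence $g(t)\equiv g(t_\ast)$ for $t\geq t_\ast$ by uniqueness of the Ricci flow starting at a stationary point, and the desired estimate reduces to the corresponding statement on $[t_1,t_\ast]$. So I may assume $|\lambda(g(t))|>0$ on $[t_1,t_2]$, which makes $|\lambda(g(t))|^{1/2}$ differentiable with
\begin{equation}
\tfrac{d}{dt}|\lambda(g(t))|^{1/2}=-\frac{N(t)^2}{|\lambda(g(t))|^{1/2}}.
\end{equation}

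Now I apply the Łojasiewicz inequality (\ref{stabB}) in the form $|\lambda(g(t))|^{1/2}\leq C_1 N(t)$ to estimate
\begin{equation}
\tfrac{d}{dt}|\lambda(g(t))|^{1/2}\leq -\frac{N(t)}{C_1},
\end{equation}
and integrate from $t_1$ to $t_2$ to obtain $\int_{t_1}^{t_2}N(t)\,dt\leq C_1\bigl(|\lambda(g(t_1))|^{1/2}-|\lambda(g(t_2))|^{1/2}\bigr)$. Finally, the transversality estimate (\ref{stabC}) gives $\|\Rc_{g(t)}\|_{L^2_{g(t)}}\leq C_2 N(t)$ pointwise in $t$, and multiplying through yields the claimed bound.

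I do not expect any genuine obstacle here: the monotonicity formula is Perelman's, and once it is in hand the Łojasiewicz trick of dividing by $|\lambda|^{1/2}$ to produce an integrable surrogate for $N(t)$ is routine. The only minor care-points are the handling of the point where $\lambda$ may vanish (resolved by the rigidity clause of Theorem A plus uniqueness of Ricci flow from a Ricci-flat initial datum) and ensuring that the $L^2_{f_g}$-norm used in (B) and (C) is compatible with the $L^2_g$-norm appearing in the statement; this is immediate from the uniform $C^{2,\alpha}$-bound $|f_{g(t)}-\log\mathrm{Vol}_{\grf}(M)|\leq\eps$ supplied by Lemma \ref{ctwoalphalemma}, which absorbs the weight $e^{-f_{g(t)}}$ into the constants $C_1, C_2$.
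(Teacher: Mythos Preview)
Your proof is correct and follows essentially the same route as the paper: Perelman's monotonicity $\tfrac{d}{dt}\lambda=2\|\Rc+\Hess f\|_{L^2_f}^2$, divide by $|\lambda|^{1/2}$, apply the {\L}ojasiewicz inequality (\ref{stabB}) to make the right-hand side integrable, then use transversality (\ref{stabC}) to pass from $\|\Rc+\Hess f\|$ to $\|\Rc\|$. The paper simply declares ``without loss of generality $\lambda(g(t))<0$'' where you spell out the degenerate case via the rigidity clause of Theorem~A and uniqueness of the flow; your treatment is a bit more careful, but the argument is the same.
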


\begin{proof}
Without loss of generality, we can assume the inequality in (\ref{stabA}) is strict, i.e. $\lambda(g(t))<0$ for all $t\in[0,T)$. By Perelman's monotonicity formula $\lambda=-|\lambda|$ is increasing along the flow, more precisely,
\begin{align}
-\tfrac{d}{dt}|\lambda(g(t))|^{1/2}&=\tfrac{1}{2}|\lambda(g(t))|^{-1/2}\tfrac{d}{dt}\lambda(g(t))\nonumber\\
&=|\lambda(g(t))|^{-1/2}\langle \Rc_{g(t)}+\Hess_{g(t)}f_{g(t)},\Rc_{g(t)}\rangle_{L^2_{f_{g(t)}}}\nonumber\\
&=|\lambda(g(t))|^{-1/2}\norm{\Rc_{g(t)}+\Hess_{g(t)}f_{g(t)}}_{L^2_{f_{g(t)}}}^2\nonumber\\
&\geq \tfrac{1}{C_1C_2}\norm{\Rc_{g(t)}}_{L^2_{g(t)}}
\end{align}
where we used (\ref{1stvaroflambda}), (\ref{dlambdadiffinv}), (\ref{stabB}) and (\ref{stabC}). This proves the lemma.
\end{proof}

\begin{lemma}[Estimates for $t\leq 1$]\label{tleq1est}
Let $(M^n,\grf)$ be compact, Ricci-flat, $k\geq 3, \eps>0$. Then there exists a $\delta_1=\delta_1(M^n,\grf,\eps,k)>0$ such that:
If $\norm{g_0-\grf}_{C^{k+2}_{\grf}}<\delta_1$ then the Ricci flow starting at $g_0$ exists on $[0,1]$ and satisfies
\begin{equation}
\norm{g(t)-\grf}_{C^k_{\grf}}<\eps \quad \forall t\in[0,1].
\end{equation}
\end{lemma}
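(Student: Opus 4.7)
The plan is to apply the DeTurck trick with reference metric $\grf$ and then invoke standard parabolic regularity, exploiting the fact that $\grf$ is a smooth stationary solution of the resulting strictly parabolic equation.

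First I would introduce the Ricci-DeTurck flow
$$\partial_t \tilde g = -2\Rc_{\tilde g} + \mathcal{L}_{W(\tilde g, \grf)}\tilde g, \qquad \tilde g(0) = g_0,$$
where $W^k = \tilde g^{pq}\bigl(\Gamma^k_{pq}(\tilde g) - \Gamma^k_{pq}(\grf)\bigr)$ is the DeTurck vector field. Since $\grf$ is Ricci-flat, $\tilde g \equiv \grf$ is a stationary solution, and the DeTurck correction cancels the gauge degeneracy of $\Rc$, so that the right hand side is strictly parabolic in a neighborhood of $\grf$. Standard short-time existence together with parabolic Schauder estimates in a tubular neighborhood of $\grf$ then yields: there exists $\delta_1' > 0$ such that $\|g_0 - \grf\|_{C^{k+2}_{\grf}} < \delta_1'$ implies that $\tilde g(t)$ exists on $[0,1]$ and satisfies
$$\sup_{t \in [0,1]} \|\tilde g(t) - \grf\|_{C^{k+1}_{\grf}} \leq C \|g_0 - \grf\|_{C^{k+2}_{\grf}}.$$

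Second, I would recover the actual Ricci flow via $g(t) = \varphi_t^*\tilde g(t)$, where $\varphi_t\colon M \to M$ solves $\partial_t \varphi_t = -W(\tilde g(t), \grf) \circ \varphi_t$ with $\varphi_0 = \mathrm{id}$. Because $W$ vanishes identically at $\tilde g = \grf$ and depends on a single derivative of $\tilde g$, one has $\|W(\tilde g(t), \grf)\|_{C^k_{\grf}} \leq C \|\tilde g(t) - \grf\|_{C^{k+1}_{\grf}}$, and integrating the ODE for $\varphi_t$ on $[0,1]$ yields $\|\varphi_t - \mathrm{id}\|_{C^{k+1}_{\grf}} \leq C \|g_0 - \grf\|_{C^{k+2}_{\grf}}$. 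Combined with the pullback formula this gives $\|g(t) - \grf\|_{C^k_{\grf}} < \eps$ on $[0,1]$, provided $\delta_1 = \delta_1(M^n, \grf, \eps, k)$ is chosen small enough.

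The main technical point is the derivative accounting: one derivative of $\tilde g$ is consumed in forming $W$, and a further derivative is consumed in the pullback by $\varphi_t$, which is precisely the origin of the two-derivative gap between the hypothesis on $C^{k+2}$ and the conclusion on $C^k$. Note that neither the integrability assumption nor the sign condition $\Lap^L_{\grf} \leq 0$ plays any role in this lemma; it is a purely local short-time statement that only uses smoothness and Ricci-flatness of $\grf$. These finer hypotheses enter only in the long-time stability analysis built on top of Lemma \ref{energylemma}.
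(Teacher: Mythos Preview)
Your approach via the Ricci--DeTurck flow is valid but genuinely different from the paper's. The paper argues directly with the Ricci flow: from the evolution equations $\partial_t\Rm=\Lap\Rm+\Rm\ast\Rm$ and $\partial_t\Rc=\Lap\Rc+\Rm\ast\Rc$ it derives, via the maximum principle, that the derivatives $|D^i\Rm|$ stay uniformly bounded and the derivatives $|D^i\Rc|$ stay small for $t\in[0,1]$ once the initial metric is $C^{k+2}$-close to $\grf$; then it integrates $\partial_t g=-2\Rc$ to control $\norm{g(t)-\grf}_{C^k}$. Your route instead passes through a strictly parabolic gauge-fixed flow, invokes Schauder theory near the stationary solution $\grf$, and then pulls back by the DeTurck diffeomorphisms. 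The paper itself acknowledges this alternative in the remark following Theorem~E, noting that both routes incur the same two-derivative loss. The curvature-evolution argument has the advantage of being self-contained and avoiding any tracking of diffeomorphisms; your argument has the advantage of using off-the-shelf parabolic regularity and making the origin of the derivative loss transparent.

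One bookkeeping quibble: as written, your two displayed estimates are not quite consistent. You state $\sup_{[0,1]}\norm{\tilde g(t)-\grf}_{C^{k+1}}\leq C\norm{g_0-\grf}_{C^{k+2}}$ for the DeTurck flow, but then to obtain $\norm{\varphi_t-\mathrm{id}}_{C^{k+1}}$ you need $W\in C^{k+1}$, hence $\tilde g-\grf\in C^{k+2}$, not merely $C^{k+1}$. In fact parabolic regularity for the strictly parabolic DeTurck flow gives $C^{k+2}$ (indeed $C^{k+2,\alpha}$) control on $\tilde g(t)$ from $C^{k+2,\alpha}$ initial data with no loss, so the fix is simply to sharpen your first estimate; the two-derivative loss then occurs entirely in the chain $\tilde g\mapsto W\mapsto\varphi_t\mapsto\varphi_t^\ast\tilde g$, exactly as you describe in your final paragraph. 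Your observation that neither integrability nor $\Lap^L_{\grf}\leq 0$ is used here is correct and matches the paper.
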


\begin{proof}
From $\partial_t\Rm=\Lap\Rm+\Rm\ast\Rm$ and $\partial_t\Rc=\Lap\Rc+\Rm\ast\Rc$, we get the evolution inequalities
\begin{align}
\partial_t |D^i\Rm|^2&\leq\Lap |D^i\Rm|^2+\sum_{j=0}^iC_{ij}|D^{i-j}\Rm||D^j\Rm||D^i\Rm|,\label{evofdrm}\\
\partial_t |D^i\Rc|^2&\leq\Lap |D^i\Rc|^2+\sum_{j=0}^iC_{ij}|D^{i-j}\Rm||D^j\Rc||D^i\Rc|.\label{evofdrc}
\end{align}
From (\ref{evofdrm}), by the maximum principle, there exists a $\tilde{K}=\tilde{K}(K,n,k)<\infty$ such that
if $g(t)$ is a Ricci flow on [0,T] with $T\leq 1$ and
\begin{equation}
|\Rm(x,t)|\leq K,\quad |D^i\Rm(x,0)|\leq K, \quad \forall x\in M, t\in[0,T],i\leq k
\end{equation}
then
\begin{equation}
|D^i\Rm(x,t)|\leq \tilde{K} \quad \forall x\in M, t\in[0,T],i\leq k.
\end{equation}
From (\ref{evofdrc}), by the maximum principle, for every $\tilde{\eps}>0$, there exists a $\tilde{\delta}=\tilde{\delta}(\tilde{K},\tilde{\eps},n,k)>0$ such that for $g(t)$ as above:
\begin{align}
&|D^i\Rc(x,0)|\leq \tilde{\delta} \quad \forall x\in M, i\leq k\nonumber\\
&\quad\Rightarrow\quad |D^i\Rc(x,t)|\leq \tilde{\eps} \quad  \forall x\in M, t\in[0,T],i\leq k.
\end{align}
Finally, as long as the $C^k$-norms defined via $\grf$ and $g(t)$ differ at most by a factor $2$,
\begin{equation}
\tfrac{d}{dt}\norm{g(t)-\grf}_{C^{k}_{\grf}}
\leq\norm{2\Rc_{g(t)}}_{C^{k}_{\grf}}
\leq 4 \sum_{i=0}^k\sup_{x\in M}  |D^i\Rc(x,t)|.
\end{equation}
Now, we put the above facts together:
Without loss of generality, assume $\eps>0$ is small enough that the $C^k$-norms defined via $\grf$ and via $g$ with $\norm{g-\grf}_{C^{k}_{\grf}}\leq\eps$ differ at most by a factor $2$. Pick some small enough $\bar{\delta}>0$. Define
\begin{multline}
K:=\sup \{|\Rm_g(x)|;\; \norm{g-\grf}_{C^{k}_{\grf}}\leq\eps, x\in M\}\\
+\sup \{|D^i\Rm_g(x)|;\; \norm{g-\grf}_{C^{k+2}_{\grf}}\leq\bar{\delta},x\in M,\,i\leq k\}
<\infty.
\end{multline}
Let $\tilde{K}:=\tilde{K}(K,n,k), \tilde{\delta}:=\tilde{\delta}(\tilde{K},\tfrac{\eps}{16(k+1)},n,k)$ and let $\delta_1<\bar{\delta}$ be so small that
\begin{equation}
\norm{g-\grf}_{C^{k+2}_{\grf}}\leq\delta_1\Rightarrow \sup_{x\in M,\,i\leq k}|D^i\Rc_g(x)|\leq\tilde{\delta},\quad \norm{g-\grf}_{C^{k}_{\grf}}\leq\tfrac{\eps}{4}.
\end{equation}
Let $\norm{g_0-\grf}_{C^{k+2}_{\grf}}<\delta_1$. Let $T\in(0,\infty]$ be the maximal time such that the Ricci flow starting at $g_0$ exists on [0,T) and satisfies
\begin{equation}
\norm{g(t)-\grf}_{C^{k}_{\grf}}<\eps\quad\forall t\in[0,T).
\end{equation}
Suppose, towards a contradiction, $T\leq1$. Then
\begin{equation}
\norm{g(t)-\grf}_{C^{k}_{\grf}}\leq \norm{g_0-\grf}_{C^{k}_{\grf}}+4(k+1)\sup_{\begin{subarray}{l}
x\in M,\,i\leq k\\
t\in[0,T] \end{subarray}}
|D^i\Rc(x,t)|\leq\tfrac{\eps}{2}.
\end{equation}
for all $t\in[0,T]$. This contradicts the maximality in the definition of $T$ and  proves the lemma.
\end{proof}

\begin{lemma}[Estimates for $t\geq 1$] \label{tgeq1est}
Let $(M^n,\bar{g})$ be compact and $\eps>0$ small enough. Then there exist constants $C_i=C_i(M^n,\bar{g},\eps,i)<\infty$ such that if 
g(t) is a Ricci flow with $\norm{g(t)-\bar{g}}_{C^2_{\bar{g}}}<\eps$ for all $t\in[0,T)$ then
\begin{equation}
\norm{\Rc_{g(t)}}_{C^i_{g(t)}}\leq C_i \norm{\Rc_{g(t-1/2)}}_{L^2_{g(t-1/2)}} \quad \forall t\in[1,T).
\end{equation}
\end{lemma}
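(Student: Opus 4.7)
The plan is to view $\Rc$ as a solution of the linear parabolic equation $\partial_t\Rc=\Lap\Rc+\Rm\ast\Rc$ (already used above in the proof of Lemma~\ref{tleq1est}), and to combine parabolic Moser iteration with Shi's derivative estimates, exploiting that $C^2$-closeness to the fixed background $\bar{g}$ gives uniform control of all relevant geometric constants.

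First, I would extract from the hypothesis $\norm{g(t)-\bar{g}}_{C^2_{\bar{g}}}<\eps$ a uniform curvature bound $|\Rm_{g(t)}|\leq K=K(M,\bar{g},\eps)$, a uniform equivalence of the metrics $g(t)$ and $\bar{g}$, and uniform Sobolev and Poincar\'e constants for $(M,g(t))$. Because the curvature is uniformly bounded on the whole interval $[0,T)$, Shi's local derivative estimates applied on parabolic cylinders of length $1/4$ give uniform bounds $|D^j\Rm_{g(t)}|\leq K_j$ for all $j$ and all $t\in[1/2,T)$. In particular $\Rm\ast\Rc$ is a linear lower-order perturbation whose coefficients are bounded in every $C^j$-norm.

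Second, I would use the evolution inequality
\begin{equation}
\partial_t|\Rc|^2\leq \Lap|\Rc|^2+CK|\Rc|^2,
\end{equation}
which follows from $\partial_t\Rc=\Lap\Rc+\Rm\ast\Rc$ and the bound on $\Rm$. This makes $|\Rc|^2$ a nonnegative subsolution of a linear parabolic equation with bounded coefficient on a background with uniformly controlled geometry. Parabolic De Giorgi--Nash--Moser iteration then yields, for every $t\in[1,T)$,
\begin{equation}
\sup_{x\in M}|\Rc_{g(t)}(x)|^2\leq C\int_{t-1/2}^{t-1/4}\!\!\int_M |\Rc_{g(s)}|^2\, dV_{g(s)}\, ds,
\end{equation}
and then, using the same subsolution property on $[t-1/2,t-1/4]$ to pass from time-integrated to instantaneous $L^2$, the desired estimate
\begin{equation}
\norm{\Rc_{g(t)}}_{L^\infty_{g(t)}}\leq C_0\,\norm{\Rc_{g(t-1/2)}}_{L^2_{g(t-1/2)}}.
\end{equation}

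Finally, to upgrade $L^\infty$ to $C^i$, I would bootstrap on a nested chain of parabolic cylinders inside $[t-1/4,t]$: with the Shi bounds on $D^j\Rm$, the equation for $\Rc$ is a linear parabolic system with smooth, uniformly bounded coefficients, so parabolic Schauder estimates promote $L^\infty$-control at an earlier time to $C^i$-control at time $t$, giving the constants $C_i$.

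The main obstacle is the parabolic Moser step, since the metric is time-dependent. The point that makes it go through is that the $C^2$-closeness to $\bar{g}$ forces all the iteration constants (Sobolev, volume comparison, uniform ellipticity of $\Lap_{g(t)}$) to be independent of $t$ and of $g_0$, so the classical scalar De Giorgi--Nash--Moser argument applies verbatim to $|\Rc|^2$. Once this is in place, Shi plus Schauder handles the higher-derivative bounds routinely.
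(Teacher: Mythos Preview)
Your proposal is correct and follows essentially the same approach as the paper: Moser iteration on the subsolution $|\Rc|^2$ to pass from $L^2$ to $L^\infty$, the integrated inequality $\tfrac{d}{dt}\int_M|\Rc|^2\,dV\leq C\int_M|\Rc|^2\,dV$ to replace the spacetime $L^2$ by the instantaneous one, and Shi-type derivative estimates (first for $\Rm$, then for $\Rc$) to upgrade to $C^i$. The paper phrases the final step as Bando--Bernstein--Shi estimates rather than parabolic Schauder, but the content is the same.
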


\begin{proof}
Since $\eps$ is small enough, we have uniform curvature bounds and a uniform bound for the Sobolev constant. Thus, from the evolution inequality
\begin{equation}
\partial_t|\Rc|^2\leq\Lap|\Rc|^2+CK|\Rc|^2,
\end{equation}
by Moser iteration (see e.g. \cite{Ye}), there exists $\tilde{K}=\tilde{K}(M,\bar{g},\eps)<\infty$ such that
\begin{equation}
\sup_{x\in M}|\Rc(x,t)|\leq\tilde{K}\norm{\Rc_{g(t-1/4)}}_{L^2_{g(t-1/4)}}.
\end{equation}
Note that usually a spacetime integral appears on the right hand side, however one can get rid of the time integral using
\begin{equation}
\tfrac{d}{dt}\int_M{|\Rc|^2}dV\leq \tilde{C}K\int_M{|\Rc|^2}dV.
\end{equation}
From the evolution equation of $\Rm$, by Bando-Bernstein-Shi estimates (see e.g. \cite{Ha2}), one gets uniform bounds for the derivatives of $\Rm$ let's say on $[\tfrac{3}{4},T)$. Using these bounds in the evolution equation of $\Rc$, again by Bando-Bernstein-Shi type estimates, we get constants $\bar{K}_i=\bar{K}_i(M,\bar{g},\eps,i)<\infty$ such that
\begin{equation}
\sup_{x\in M,j\leq i}|D^j\Rc(x,t)|\leq\bar{K}_i\sup_{x\in M}|\Rc(x,t-\tfrac{1}{4})|\qquad \forall t\in[1,T).
\end{equation}
This proves the lemma.
\end{proof}

Let us restate Theorem E in the following equivalent form.

\begin{thmE}[Dynamical stability]
Let $(M^n,\grf)$ be compact, Ricci-flat and $k\geq 3$. Assume that all infinitesimal Ricci-flat deformations of $\grf$ are integrable and $\Lap^L_{\grf}\leq 0$ on TT.\\
Then for every $\eps>0$ there exists a $\delta=\delta(M^n,\grf,\eps,k)>0$ such that if $\norm{g_0-\grf}_{C^{k+2}_{\grf}}<\delta$, then the Ricci flow starting at $g_0$ exists on $[0,\infty)$, satisfies $\norm{g(t)-\grf}_{C^k_{\grf}}<\eps$ for all $t\in[0,\infty)$ and $g(t)\to g_{\infty}$ exponentially as $t\to\infty$, with $\Rc_{g_{\infty}}=0$ and $\norm{g_{\infty}-\grf}_{C^k_{\grf}}<\eps$.
\end{thmE}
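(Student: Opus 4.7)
The plan is to run a \L ojasiewicz--Simon style continuation argument, combining Theorem~A (which bounds $\lambda\leq 0$ near $\grf$), Theorem~B (\L ojasiewicz), Theorem~C (transversality), with the bootstrap lemmas already proved in this section: Lemma~\ref{energylemma} gives $\int_{t_1}^{t_2}\norm{\Rc_{g(t)}}_{L^2}\,dt\leq C_1C_2\bigl(|\lambda(g(t_1))|^{1/2}-|\lambda(g(t_2))|^{1/2}\bigr)$, Lemma~\ref{tleq1est} propagates $C^{k+2}$ smallness of $g_0$ to $C^k$ smallness on $[0,1]$, and Lemma~\ref{tgeq1est} upgrades $L^2$-smallness of $\Rc$ to $C^k$-smallness after half a unit of parabolic smoothing. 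Given $\eps>0$, I would first shrink it so that $\eps<\eps_0$ and so that the $C^k$ and $C^k_{g(t)}$ norms are comparable within the $\eps$-ball; then pick $\delta\leq \delta_1(M,\grf,\eps/4,k)$ from Lemma~\ref{tleq1est}, so that the flow exists on $[0,1]$ with $\norm{g(t)-\grf}_{C^k}<\eps/4$.

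The central step is a continuation in time. Let $T\in(1,\infty]$ be maximal with the property that the flow exists on $[0,T)$ and $\norm{g(t)-\grf}_{C^k}<\eps$ there. For $t\in[1,T)$, using $\partial_s g=-2\Rc$ together with Lemma~\ref{tgeq1est} (applied with $\bar g=\grf$) and Lemma~\ref{energylemma},
\begin{equation}
\norm{g(t)-g(1)}_{C^k}\leq 2\int_1^t\norm{\Rc_{g(s)}}_{C^k}\,ds\leq 2C_k\!\int_{1/2}^{t-1/2}\!\norm{\Rc_{g(s)}}_{L^2}\,ds\leq 2C_kC_1C_2\,|\lambda(g(1/2))|^{1/2}.
\end{equation}
Since $\lambda$ is continuous and $\lambda(\grf)=0$, I would shrink $\delta$ further (using the $C^k$-closeness on $[0,1]$) so that $|\lambda(g(1/2))|^{1/2}<\eps/(8C_kC_1C_2)$. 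This gives $\norm{g(t)-\grf}_{C^k}<\eps/4+\eps/4<\eps$ strictly on $[1,T)$, contradicting maximality unless $T=\infty$, which proves long-time existence in the prescribed neighborhood.

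For exponential convergence, I would use Theorems~B and C differentially. Perelman's monotonicity formula combined with (\ref{stabB}) gives
\begin{equation}
\tfrac{d}{dt}\lambda(g(t))=2\norm{\Rc_{g(t)}+\Hess_{g(t)}f_{g(t)}}_{L^2_{f_{g(t)}}}^2\geq \tfrac{2}{C_1^2}|\lambda(g(t))|,
\end{equation}
so $\lambda'+\tfrac{2}{C_1^2}\lambda\geq 0$ and hence $|\lambda(g(t))|\leq |\lambda(g(0))|e^{-2t/C_1^2}$. Plugging this back into Lemma~\ref{energylemma} yields $\int_t^\infty\norm{\Rc_{g(s)}}_{L^2}\,ds\leq C_1C_2|\lambda(g(t))|^{1/2}\leq Ce^{-t/C_1^2}$. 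Converting to $C^k$ via Lemma~\ref{tgeq1est} gives
\begin{equation}
\norm{g(t_1)-g(t_2)}_{C^k}\leq 2C_k\!\int_{t_1-1/2}^{t_2-1/2}\!\norm{\Rc_{g(s)}}_{L^2}\,ds\leq C'e^{-c\,t_1}
\end{equation}
for $1\leq t_1<t_2$, so $g(t)$ is Cauchy in $C^k$ with exponential rate. The limit $g_\infty$ satisfies $\norm{g_\infty-\grf}_{C^k}<\eps$, and $\Rc_{g_\infty}=0$ because $\norm{\Rc_{g(t)}}_{C^k}\to 0$ (apply Lemma~\ref{tgeq1est} together with the $L^2$-bound on $\Rc$ over $[t-1/2,t]$).

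The main obstacle is the bootstrap structure: the estimates (\ref{stabA})--(\ref{stabC}) that power Lemma~\ref{energylemma} only hold inside the $\eps_0$-ball, yet these are precisely the estimates used to confine the flow to that ball. The resolution comes from the fact that Lemma~\ref{energylemma} converts control on $g$ at a single time $t=1/2$ into control on the entire $L^1$-in-time length $\int_{1/2}^\infty\norm{\Rc}_{L^2}\,dt$, which in turn controls the $C^k$-displacement once parabolic smoothing (Lemma~\ref{tgeq1est}) is available after $t=1$. The gap between $C^{k+2}$ initial regularity and $C^k$ long-time regularity is precisely what Lemma~\ref{tleq1est} is designed to bridge in the short-time window $[0,1]$ before smoothing kicks in.
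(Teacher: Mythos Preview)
Your argument is correct and follows essentially the same route as the paper: the continuation argument via a maximal time $T$, the chain Lemma~\ref{tleq1est} $\to$ Lemma~\ref{tgeq1est} $\to$ Lemma~\ref{energylemma} to bound the $C^k$-displacement by $|\lambda|^{1/2}$, and the differential inequality $\tfrac{d}{dt}|\lambda|\leq -\tfrac{2}{C_1^2}|\lambda|$ for exponential decay. The only cosmetic differences are that the paper controls the displacement by $|\lambda(g_0)|^{1/2}$ rather than $|\lambda(g(1/2))|^{1/2}$ (equivalent by monotonicity), and deduces $\Rc_{g_\infty}=0$ from $\lambda(g_\infty)=0$ via the equality case of Theorem~A rather than from $\norm{\Rc_{g(t)}}_{C^k}\to 0$.
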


\begin{proof}[Proof of Theorem E]
Without loss of generality, assume $\eps$ is small enough that the previous lemmas apply and that the $C^k$-norms defined via $g$ and $\grf$ with $\norm{g-\grf}_{C^k_{\grf}}<\eps$ differ at most by a factor $2$. Let $\delta:=\min\{\delta_1,\delta_2\}$, where $\delta_1=\delta_1(M,\grf,\tfrac{\eps}{4},k)>0$ is from Lemma \ref{tleq1est} and $\delta_2=\delta_2(M,\grf,\eps,k)>0$ is such that $\norm{g_0-\grf}_{C^{k+2}_{\grf}}<\delta_2$ implies
\begin{equation}
4C_1C_2C_k|\lambda(g_0)|^{1/2}\leq\tfrac{\eps}{4}
\end{equation}
where $C_k=C_k(M^n,\grf,\eps,k)$ is from Lemma \ref{tgeq1est} and $C_1,C_2$ are from the beginning of Section \ref{stabandinstab}.
Let $\norm{g_0-\grf}_{C^{k+2}_{\grf}}<\delta$ and $T\in(1,\infty]$ be the maximal time such that the Ricci flow starting at $g_0$ satisfies
\begin{equation}
\norm{g(t)-\grf}_{C^k_{\grf}}<\eps \quad \forall t\in[0,T).
\end{equation}
Without loss of generality, assume the inequality $\lambda(g(t))\leq0$ is strict for all $t\in[0,T)$. Suppose towards a contradiction $T<\infty$. Then for all $t\in[1,T)$
\begin{equation}
\tfrac{d}{dt}\norm{g(t)-g(1)}_{C^k_{\grf}}\leq 4 \norm{\Rc_{g(t)}}_{{C^k_{g(t)}}}\leq 4C_k \norm{\Rc_{g(t-1/2)}}_{L^2_{g(t-1/2)}}
\end{equation}
by Lemma \ref{tgeq1est}, and thus by Lemma \ref{tleq1est} and Lemma \ref{energylemma}
\begin{equation}
\norm{g(t)-\grf}_{C^k_{\grf}}\leq \norm{g(1)-\grf}_{C^k_{\grf}}+4C_1C_2C_k|\lambda(g_0)|^{1/2}\leq\tfrac{\eps}{2}
\end{equation}
for all $t\in[1,T)$. This contradicts the maximality in the definition of $T$, thus $T=\infty$ and
\begin{align}
&\norm{g(t)-\grf}_{C^k_{\grf}}<\eps, \quad \quad t\in[0,\infty)\\
&\int_0^{\infty}\norm{\dot{g}(t)}_{C^k_{\grf}}dt <\infty.
\end{align}
Thus $g(t)\to g_{\infty}$ in $C^k_{\grf}$ for $t\to\infty$ (since $g(t)$ is a Ricci flow with smooth initial metric, the convergence is in fact smooth).
Along the flow, we have 
\begin{align}
&-\tfrac{d}{dt}|\lambda|=2\norm{\Rc+\Hess f}_{L^2_f}^2\geq\tfrac{2}{C_1^2}|\lambda|\\
&\qquad\Rightarrow |\lambda(g(t_2))|\leq e^{-2(t_2-t_1)/C_1^2}|\lambda(g(t_1))|.
\end{align}
Thus $\lambda(g_\infty)=0$, $\Rc_{g_\infty}=0$ and, using in particular Lemma \ref{energylemma} and Lemma \ref{tgeq1est}, we see that the convergence is exponential (the exponential convergence is a consequence of the optimal {\L}ojasiewicz exponent $\tfrac{1}{2}$). This proves the theorem.
\end{proof}

\begin{remark}
Since the Ricci flow is not strictly parabolic, we mostly worked with the evolution equations of the curvatures. This is the reason for the loss of two derivatives in Theorem E. For the Ricci-DeTurck flow one of course gets optimal regularity. However, when translating back to the Ricci flow, one also loses two derivatives.
\end{remark}

\begin{proof}[Proof of Theorem F]
Pick a sequence of metrics $g_i^0\to \grf$ in $C^\infty$ with $\lambda(g_i^0)>0$.
Let $\tilde{g}_i(t)$ be the Ricci flows starting at $g_i^0$. Since $\lambda(g_i^0)>0$, by Perelman's evolution inequality $\tfrac{d\lambda}{dt}\geq\tfrac{2}{n}\lambda^2$, the flows become singular in finite time. Since $g_i^0\to \grf$ in $C^\infty$, the flows exist and stay inside a small ball for longer and longer times. Let $\eps>0$ be small enough. Let $t_i$ be the first time when $d_{C^\infty}(\tilde{g}_i(t),\grf)=\eps$. Then $t_i\to\infty$ and, always assuming $i$ is large enough,
\begin{equation}
\tfrac{\eps}{2}\leq
d_{C^\infty}(\tilde{g}_i(t_i),\tilde{g}_i(1))
\leq C\lambda(\tilde{g}_i(t_i))^{1/2},
\end{equation}
by the {\L}ojasiewicz inequality, the gauge estimate and parabolic estimates. Thus $\lambda(\tilde{g}_i(t_i))\geq c>0$, which will be used to exclude trivial solutions.\\
Shifting time, we obtain a family of Ricci flows $g_i(t):=\tilde{g}_i(t+t_i), t\in[-t_i,T),-t_i\to-\infty, T>0$ with
\begin{align}
&d_{C^\infty}(g_i(t),\grf)\leq 2\eps\quad\forall t\in[-t_i,T),\label{cinftybounds}\\
&\lambda(g_i(0))\geq c>0,\\
&g_i(-t_i)=g_i^0\to \grf \quad\text{in}\; C^\infty.
\end{align}
From (\ref{cinftybounds}) and the Ricci flow equation, we have $C^\infty$ space-time bounds. Thus, after passing to a subsequence,  $g_i$ converges to an ancient Ricci flow $g$ in $C^\infty_{\mathrm{loc}}(M\times (-\infty,T))$ with $\lambda(g(0))\geq c>0$. In particular, this implies that $g$ is nontrivial and becomes singular in finite time. Moreover, $\lambda(g(t))\geq 0$ for all $t\in(-\infty,T)$. Finally, for $-t_i\leq t$,
\begin{align}
d_{C^\infty}(\grf,g(t))&\leq d_{C^\infty}(\grf,g_i^0)+d_{C^\infty}(g_i(-t_i),g_i(t))+d_{C^\infty}(g_i(t),g(t))\nonumber\\
&\leq d_{C^\infty}(\grf,g_i^0)+C\lambda(g_i(t))^{1/2}+d_{C^\infty}(g_i(t),g(t))
\end{align}
by the {\L}ojasiewicz inequality, the gauge estimate and parabolic estimates. Since $\lambda(g_i(t))$ is bounded up to time zero and $\tfrac{d\lambda}{dt}\geq\tfrac{2}{n}\lambda^2$, we see that $\lambda(g_i(t))$ is very small for very negative $t$. Thus $g(t) \to \grf$ in $C^\infty$ as $t\to-\infty$ and this finishes the proof of the theorem.
\end{proof}

\begin{remark}\label{technicalremark}
In general, {\L}ojasiewicz type inequalities find their truest applications in the nonintegrable case. In fact, the conclusions of  Theorem E and F hold under the slightly weaker assumption that $\lambda$ is maximal respectively nonmaximal and $\grf$ satisfies the {\L}ojasiewicz type inequalities
\begin{align}
&\norm{\Rc_g+\Hess_gf_g}_{L^2}\geq c |\lambda(g)|^{1-\theta_1}\label{generalB}\\
&\norm{\Rc_g+\Hess_gf_g}_{L^2}^{\theta_2}\geq c \norm{\Rc_g}_{L^2}\label{generalC}
\end{align}
for $\theta_1\in (0,\tfrac{1}{2}], \theta_2\in (0,1]$ with
\begin{equation}
2\theta_1+\theta_2-\theta_1\theta_2>1.\label{thetacondition}
\end{equation}
The estimates (\ref{generalB}) and (\ref{generalC}) should be the natural generalizations of Theorem B and C to the nonintegrable case. It is an interesting problem to prove them using Lyapunov-Schmidt reduction, the finite-dimensional {\L}ojasiewicz inequalities and the estimates for the error terms developed in this article. Note however, that the condition (\ref{thetacondition}) is essentially uncheckable, so a new way of dealing with the gauge problem should be found.
\end{remark}

\appendix

\section{Proof of Lemma 2.1}\label{proofpertlemma}

We expand
\begin{equation}
H_{g(\eps)}=H+\eps H'[h]+\tfrac{\eps^2}{2}H''[h,h]+\tfrac{\eps^3}{6}H'''[h,h,h]+O(\eps^4),
\end{equation}
and
\begin{align}
&(\lambda-H_{g(\eps)})^{-1}=(\lambda-H)^{-1}+\eps (\lambda-H)^{-1} H'[h] (\lambda-H)^{-1}\\
&\quad+\tfrac{\eps^2}{2}\left((\lambda-H)^{-1}H''[h,h]+2\left((\lambda-H)^{-1}H'[h]\right)^2\right)(\lambda-H)^{-1}+O(\eps^3).\nonumber
\end{align}
We insert this in (\ref{pertform1}), use $(\lambda-H)^{-1}w=(\lambda-\lambda(g))^{-1}w$, use that $H$ is symmetric with respect to the $L^2$-inner product and that $w$ is normalized. Thus
\begin{equation}
\frac{1}{\langle w, P_{g(\eps)} w\rangle} = 1 - \eps^2\tfrac{1}{2\pi i}\oint \langle w, H'[h](\lambda-H)^{-1}H'[h]w\rangle\tfrac{d\lambda}{(\lambda-\lambda(g))^2}+O(\eps^3),
\end{equation}
\begin{align}
&\langle w,(H_{g(\eps)}-H)P_{g(\eps)} w\rangle\nonumber\\
&\begin{array}{l}
\quad=\eps\langle w,H'[h]w\rangle\\
\qquad+\tfrac{\eps^2}{2}\left(\langle w,H''[h,h] w\rangle+\tfrac{2}{2\pi i}\oint \langle w, H'[h](\lambda-H)^{-1}H'[h]w\rangle\tfrac{d\lambda}{\lambda-\lambda(g)}\right)\\
\qquad+\tfrac{\eps^3}{6}\langle w,H'''[h,h,h] w\rangle\\
\qquad+\tfrac{\eps^3}{6}\tfrac{6}{2\pi i}\oint \langle w, H'[h](\lambda-H)^{-1}H'[h](\lambda-H)^{-1}H'[h]w\rangle\tfrac{d\lambda}{\lambda-\lambda(g)}\\
\qquad+\tfrac{\eps^3}{6}\tfrac{3}{2\pi i}\oint \langle w, H'[h](\lambda-H)^{-1}H''[h,h]w\rangle\tfrac{d\lambda}{\lambda-\lambda(g)}\\
\qquad+\tfrac{\eps^3}{6}\tfrac{3}{2\pi i}\oint \langle w, H''[h](\lambda-H)^{-1}H'[h]w\rangle\tfrac{d\lambda}{\lambda-\lambda(g)}+O(\eps^4),
\end{array}
\end{align}
and the formulas in the lemma follow from (\ref{pertform2}).\\
Let us now justify convergence and analyticity. We have a family of closed operators
\begin{equation}
H(\eps)=-4\Lap_{g+\eps h}+R_{g+\eps h}:H^2(M)\subset L^2(M,dV_g)\to  L^2(M,dV_g).
\end{equation}
For every $u\in L^2(M,dV_g)$ and $v\in H^2(M)$, the $L^2(M,dV_g)$-inner product $\langle u,H(\eps)v\rangle$ depends analytically on $\eps$. Since every weakly analytic function is strongly analytic, for every $v\in H^2(M)$, $\eps\mapsto H(\eps)v$ is an $L^2(M,dV_g)$-valued analytic function. By the above, $H(\eps)$ is an analytic family of type (A) and thus an analytic family in the sense of Kato \cite[Sec. XII.2]{RS}. Therefore, the smallest eigenvalue $\lambda(g(\eps))$ is an analytic function of $\eps$ by the Kato-Rellich theorem \cite[Thm. XII.8]{RS}. Finally, by \cite[Thm. XII.7]{RS} the function
\begin{equation}
(\lambda,\eps)\mapsto (\lambda-H(\eps))^{-1}
\end{equation}
is an $\mathcal{L}(L^2(M,dV_g))$-valued analytic function of two variables defined on an open set, say on $\{(\lambda,\eps)\in\mathbb{C}^2:r-\delta<|\lambda-\lambda(g)|<r+\delta, |\eps|<\delta\}$, and this justifies the above computations.

\section{The second variation}\label{appendixsecondvariation}
Let $(M,\grf)$ be compact, Ricci-flat,  $h\in\ker\div_{\grf}$. 
We would like to evaluate (\ref{pertseries2}). Since $w$ is constant, there are some simplifications. To get $H''[h,h]$ we compute
\begin{equation}
\tfrac{d^2}{d\eps^2}|_0R_{g(\eps)}=\tfrac{d}{d\eps}|_0g^{ij}g^{kl}(-h_{ik}R_{jl}+D_iD_k h_{jl} -D_iD_j h_{kl}).
\end{equation}
There are contributions from the derivative of $g^{-1}$ (first line), $\Rc$ (second line) and $D$ (third line) respectively. Using $\div h=0, \Rc=0$, which implies in particular $-D_iD_kh_{il}=R_{kplq}h_{pq}$, we obtain
\begin{align}
&\tfrac{d^2}{d\eps^2}|_0R_{g(\eps)}\nonumber\\
&\quad=h_{ij}\Lap h_{ij}+R_{ipjq}h_{ij}h_{pq}+h_{ij}D_iD_j\tr h\nonumber\\
&\qquad+\tfrac{1}{2}h_{ij}\Lap h_{ij}+R_{ipjq}h_{ij}h_{pq}+\tfrac{1}{2}h_{ij}D_iD_j\tr h\nonumber\\
&\qquad+|D h|^2-\tfrac{1}{2}|D \tr h|^2-D_ih_{jk}D_kh_{ij}+\tfrac{1}{2}D_i(h_{jk}D_i h_{jk}+h_{ij}D_j \tr h)\nonumber\\
&\quad=2\langle h,\Lap h\rangle+\tfrac{3}{2}|D h|^2-\tfrac{1}{2}|D \tr h|^2+2\langle h, D^2\tr h\rangle\nonumber\\
&\qquad+2R_{ipjq}h_{ij}h_{pq}-D_ih_{jk}D_kh_{ij}.
\end{align}
Together with $\tfrac{d^2}{d\eps^2}|_0\Lap_{g(\eps)}1=0$, after partial integration, commuting the derivatives in $D_iD_k h_{ij}$ and using $\div h=0$ again, we get
\begin{equation}
\langle 1, H''[h,h]1\rangle=\tfrac{1}{2}\int_M \left(\langle h, \Lap^L h\rangle + \tr h\Lap \tr h\right) dV.
\end{equation}
The other term contributing to the second variation is proportional to
\begin{equation}
\tfrac{2}{2\pi i}\oint_{|\lambda|=r}\langle 1,H'[h](\lambda-H)^{-1}H'[h]1\rangle\tfrac{d\lambda}{\lambda}.
\end{equation}
Now, we insert $H'[h]$ from (\ref{hprime}). Since $\Rc=0, \div h=0$ and $D1$=0, many terms vanish. After a partial integration, even more terms vanish and we obtain
\begin{align}
&\tfrac{2}{2\pi i}\oint_{|\lambda|=r}\langle 1,H'[h](\lambda-H)^{-1}H'[h]1\rangle\tfrac{d\lambda}{\lambda}\nonumber\\
&\quad=-\tfrac{2}{2\pi i}\oint_{|\lambda|=r} \langle\tr h, \Lap (\lambda+4\Lap)^{-1} \Lap \tr h\rangle \tfrac{d\lambda}{\lambda} =-\tfrac{1}{2}\int_M \tr h\Lap \tr h\, dV.
\end{align}
To justify the last step, note that $\Lap (\lambda+4\Lap)^{-1}$ converges to $\tfrac{1}{4}$ as $\lambda$ tends to zero. Finally, $w=\mathrm{Vol}(M)^{-1/2}$, $\grf$ is a critical point of $\lambda$, and $\lambda$ is invariant under diffeomorphism. This proves (\ref{2ndvaroflambda}).


\begin{thebibliography}{999999}
\bibitem[Ba]{Ba} R. Bamler, \textit{Stability of hyperbolic manifolds with cusps under Ricci flow}, arXiv:1004.2058v1.
\bibitem[Be]{Be} A. Besse, \textit{Einstein manifolds}, Springer, 1987.
\bibitem[CFS]{CFS} R. Chill, E. Fasangova, R. Sch\"{a}tzle, \textit{Willmore blowups are never compact}, Duke Math. J. 147 (2009), no. 2, 345--376.
\bibitem[CHI]{CHI} H. D. Cao, R. Hamilton, T. Ilmanen, \textit{Gaussian densities and stability for some Ricci solitons}, arXiv:math/0404165v1.
\bibitem[DWW]{DWW} X. Dai, X. Wang, G. Wei, \textit{On the stability of Riemannian manifold with parallel spinors}, Invent. Math. 161 (2005) 151--176.
\bibitem[Eb]{Eb} D. G. Ebin, \textit{The manifold of Riemannian metrics}, Global Analysis (Proc. Symp. Pure Math. Vol. 15), pp. 11--40, AMS, Providence, 1968.
\bibitem[GIK]{GIK} C. Guenther, J. Isenberg, D. Knopf, \textit{Stability of Ricci flow at Ricci-flat metrics}, Comm. Anal. Geom. 10 (2002), no. 4, 741--777.
\bibitem[GPY]{GPY} D. Gross, M. Perry, L. Yaffe, \textit{Instability of flat space at finite temperature}, Phys. Rev. D (3) 25 (1982), no. 2, 330--355.
\bibitem[GT]{GT} D. Gilbarg, N. Trudinger, \textit{Elliptic partial differential equations of second order}, Second edition, Springer, 1983.
\bibitem[Ha]{Ha} R. Hamilton, \textit{Three-Manifolds with Positive Ricci Curvature}, J. Differential Geom. 17 (1982) 255--306.
\bibitem[Ha2]{Ha2} R. Hamilton, \textit{The formation of singularities in the Ricci flow}, Surveys in differential geometry, Vol. II, 7--136, Int. Press, Cambridge, MA, 1995.
\bibitem[Jo]{Jo} D. Joyce, \textit{Compact manifolds with special holonomy}, Oxford University Press, 2000.
\bibitem[Kn]{Kn} D. Knopf, \textit{Convergence and stability of locally $\mathbb{R}^N$-invariant solutions of Ricci flow}, J. Geom. Anal. 19 (2009), no. 4, 817--846.
\bibitem[KL]{KL} B. Kleiner, J. Lott, \textit{Notes on Perelman's papers}, Geom. Topol. 12 (2008), no. 5, 2587--2855.
\bibitem[KY]{KY} D. Knopf, A. Young, \textit{Asymptotic stability of the cross curvature flow at a hyperbolic metric}, Proc. Amer. Math. Soc. 137 (2009), no. 2, 699--709. 
\bibitem[LY]{LY} H. Li, H. Yin, \textit{On stability of the hyperbolic space form under the normalized Ricci flow}, arXiv:0906.5529v1.
\bibitem[OW]{OW} T. Oliynyk, E. Woolgar, \textit{Rotationally symmetric Ricci flow on asymptotically flat manifolds}, Comm. Anal. Geom. 15 (2007), no. 3, 535--568.
\bibitem[Pe]{Pe} G. Perelman, \textit{The entropy formula for the Ricci flow and its geometric applications}, arXiv:math/0211159v1.
\bibitem[Ra]{Ra} J. R{\aa}de, \textit{On the Yang-Mills heat equation in two and three dimensions}, J. Reine Angew. Math. 431 (1992), 123--163.
\bibitem[RS]{RS} M. Reed, B. Simon, \textit{Methods of modern mathematical physics. IV. Analysis of operators}, Academic Press, 1978.
\bibitem[Se]{Se} N. Sesum, \textit{Linear and dynamical stability of Ricci-flat metrics}, Duke Math. J. 133 (2006) 1--26.
\bibitem[Si]{Si} L. Simon, \textit{Asymptotics for a class of nonlinear evolution equations, with applications to geometric problems}, Ann. of Math. (2) 118 (1983), no. 3, 525--571.
\bibitem[SSS1]{SSS1}  O. Schn\"{u}rer, F. Schulze, M. Simon, \textit{Stability  of Euclidean Space under Ricci Flow}, Comm. Anal. Geom. 16 (2008) 127--158.
\bibitem[SSS2]{SSS2}  O. Schn\"{u}rer, F. Schulze, M. Simon, \textit{Stability of Hyperbolic Space under Ricci Flow}, arXiv:1003.2107v1.
\bibitem[SY]{SY} R. Schoen, S.T. Yau, \textit{On the proof of the positive mass conjecture in general relativity}, Comm. Math. Phys. 65 (1979), no. 1, 45--76.
\bibitem[Ti]{Ti} G. Tian, \textit{Smoothness of the universal deformation space of compact Calabi-Yau manifolds and its Petersson-Weil metric}, Mathematical aspects of string theory, 629--646, World Sci. Publishing, 1987. 
\bibitem[To]{To} A. Todorov, \textit{The Weil-Petersson geometry of the moduli space of ${\rm SU}(n\geq 3)$ (Calabi-Yau) manifolds}, 
Comm. Math. Phys. 126 (1989), no. 2, 325--346.
\bibitem[TZ]{TZ} G. Tian, X. Zhu, \textit{Perelman's W-functional and stability of K\"{a}hler-Ricci flow}, arXiv:0801.3504v1.
\bibitem[Wa]{Wa} M. Wang, \textit{Preserving parallel spinors under metric deformations}, Indiana Univ. Math. J. 40 (1991), no. 3, 815--844.
\bibitem[Wi]{Wi} E. Witten, \textit{A new proof of the positive energy theorem}, Comm. Math. Phys. 80 (1981), no. 3, 381--402.
\bibitem[Ya]{Ya} S.T. Yau, \textit{On the Ricci curvature of a compact K\"{a}hler manifold and the complex Monge-Ampère equation}, Comm. Pure Appl. Math. 31 (1978), no. 3, 339--411.
\bibitem[Ye]{Ye} R. Ye, \textit{Ricci flow, Einstein metrics and space forms}, Trans. Amer. Math. Soc. 338 (1993), no. 2, 871--896.
\end{thebibliography}
\end{document}